\documentclass[reqno,12pt]{amsart}

\usepackage[T1]{fontenc}
\usepackage[expansion=false]{microtype}
\usepackage{amsfonts,amsthm,amsmath,amssymb,amscd,mathrsfs}
\usepackage{mathtools,esint}
\allowdisplaybreaks
\usepackage{latexsym}
\usepackage[colorlinks=true,linkcolor=blue,citecolor=red,urlcolor=black]{hyperref}
\usepackage{graphicx}
\usepackage{indentfirst}
\usepackage{cite}
\usepackage{color,xcolor}
\usepackage{lmodern}
\usepackage{booktabs,array,needspace}

\usepackage[
   letterpaper,
   textheight=8.35in,
   headsep=20pt,
   footskip=36pt,
   marginparwidth=0pt,
   marginparsep=0pt,
   left=0.85in,
   right=0.85in
]{geometry}

\hypersetup{
  pdftitle={Regular Solutions of the Stationary Navier--Stokes Equations in Arbitrarily High Dimensions},
  pdfauthor={Wendong Wang and Guoxu Yang},
  pdfsubject={Stationary Navier--Stokes equations in arbitrary fixed high dimension}
}

\newtheorem{theorem}{Theorem}[section]
\newtheorem{lemma}[theorem]{Lemma}
\newtheorem{proposition}[theorem]{Proposition}
\newtheorem{corollary}[theorem]{Corollary}

\theoremstyle{remark}
\newtheorem{remark}[theorem]{Remark}
\newtheorem*{remark*}{Remark}
\numberwithin{equation}{section}

\newif\ifshowchanges
\showchangesfalse

\newcommand{\R}{\mathbb R}
\newcommand{\T}{\mathbb T}
\newcommand{\Z}{\mathbb Z}
\newcommand{\Rn}{\R^n}
\newcommand{\Tn}{\T^n}
\newcommand{\BMO}{\mathrm{BMO}}
\newcommand{\Hdot}{\dot H^1}

\newcommand{\Div}{\operatorname{div}}
\newcommand{\supp}{\operatorname{supp}}
\newcommand{\esssup}{\operatorname*{ess\,sup}}
\newcommand{\loc}{\mathrm{loc}}
\newcommand{\avint}{\fint}
\DeclarePairedDelimiter{\norm}{\lVert}{\rVert}

\newcommand{\proofstep}[2]{\par\medskip\needspace{3\baselineskip}\noindent\textbf{Step #1. #2}\par\smallskip}

\title[Regular solutions of the stationary Navier--Stokes equations]{Regular Solutions of the Stationary Navier--Stokes Equations in Arbitrarily High Dimensions}

\author{Wendong~Wang$^{1}$ \and
        Guoxu~Yang$^{1,*}$}

\thanks{$^{1}$School of Mathematical Sciences, Dalian University of Technology,
Dalian 116024, China.}

\thanks{E-mail address: 
        wendong@dlut.edu.cn (Wendong~Wang),
        guoxu\_dlut@outlook.com (Guoxu~Yang).}

\thanks{$^{*}$Author to whom correspondence should be addressed.}

\subjclass[2020]{35Q30, 35A01, 35B45, 35J15, 76D05}
\keywords{Stationary Navier--Stokes equations, regular solutions, Bernoulli function, Morrey estimate, divergence-form forcing}

\begin{document}

\begin{abstract}
Recently, the existence of regular  solutions $(u,p)$ to the stationary incompressible
Navier--Stokes equations on $\R^n$ with bounded compactly supported forces $f$ for
$5\le n\le15$ was established by Li--Yang (Comm. Math. Phys., 2022). However, it is still unknown whether there exists a regular solution for dimensions $n>15$. Here we answer this question and obtain the existence of regular solutions to the stationary Navier--Stokes equations in arbitrarily high dimensions. 
The key innovation is to combine an averaged signed pressure-potential identity with $L^2$-BMO estimates for the Bernoulli source flux, yielding a Bernoulli supremum bound independent of the size of the skew coefficient representing the drift. Together with the energy estimate for the comparison solution, this yields a sublinear bound for the Newtonian potential of the positive Bernoulli function, removing the upper dimension restriction in the earlier existence argument.
We also construct
periodic solutions and whole-space solutions for forces in
$L^\infty\cap L^{2n/(n+2)}$. For compactly supported forces, we record
two far-field profiles, their velocity, gradient and pressure remainders,
and the corresponding cancellation criteria.
\end{abstract}

\maketitle

\tableofcontents

\section{Introduction}\label{sec:results}

Consider the stationary incompressible Navier--Stokes equations as follows:
\begin{equation}\label{eq:NS}
 -\Delta u+(u\cdot\nabla)u+\nabla p=f,\qquad \Div u=0.
\end{equation}

A \emph{regular solution} has $u\in W^{2,q}_{\loc}$ and
$p\in W^{1,q}_{\loc}$ for every $1<q<\infty$, and satisfies
\eqref{eq:NS} in distributions. We write
\begin{equation}\label{eq:Bernoulli-definition}
 \theta=p+\frac{|u|^2}{2},\qquad
 \theta_+=\max\{\theta,0\}.
\end{equation}
In the main existence statements, $n\ge16$ is an arbitrary fixed integer; constants may depend on $n$.

\subsection{Background}
The energy method supplies the natural starting point for stationary
Navier--Stokes theory, but its strength depends on the dimension.
In dimension four the Sobolev estimate $H^1\hookrightarrow L^4$ is
compatible with the quadratic convection term. The regularity results of
Gerhardt~\cite{G1979} and Giaquinta--Modica~\cite{GM1982} belong to this
critical-dimensional theory. For $n\ge5$, the energy embedding gives
only $u\in L^{2n/(n-2)}$, with $2n/(n-2)<n$. Thus it does not by itself
provide a scale-critical $L^n$ bound for the velocity. Constructing one
regular solution for large forces and proving regularity of every energy
weak solution are consequently distinct problems.

Bernoulli estimates have been central to progress on the existence
problem. Frehse and R\r{u}\v{z}i\v{c}ka developed weighted estimates and
interior regularity methods for stationary systems in bounded domains
\cite{FR1994a,FR1994b,FR1997}. Struwe~\cite{Struwe1995} obtained
regular solutions on $\R^5$ and the five-dimensional torus.
The periodic theory of Frehse--R\r{u}\v{z}i\v{c}ka
\cite{FR1995a,FR1995b} reaches dimensions $5\le n\le15$.
These works show why the positive part of the Bernoulli function is
useful: it links scalar elliptic estimates to local velocity control,
even when an energy estimate alone does not close the nonlinear system.
Small-force existence in arbitrary dimensions is a separate line of
work; see, for instance, Farwig--Sohr~\cite{FS2009}.

For the whole-space problem with bounded compactly supported forces,
Li--Yang~\cite[Theorem~1.1]{LY2022} extended Struwe's result to
$5\le n\le15$ and established the Stokes decay rates
$|u(x)|=O(|x|^{2-n})$ and
$|\nabla u(x)|+|p(x)|=O(|x|^{1-n})$.
Their construction combines a weighted Stokes integral equation with
a priori estimates and Leray--Schauder degree. The dimension restriction
comes from the global pressure--velocity estimate in their Lemma~2.11:
the auxiliary exponent must satisfy
\begin{equation}\label{eq:LY-exponent-range}
 \max\{2,n/4\}<q<\min\{4,n/2\}.
\end{equation}
The lower bound $q>n/4$ makes the Bernoulli exponent $nq/(n-2q)$
strictly greater than $n/2$, as required by their local criterion.
The upper bound $q<4$ enters the H\"older estimate for the nonlinear
pressure test. The interval is empty for $n\ge16$; see
\cite[Lemma~2.11 and Remark~2.12]{LY2022}. Motivated by \cite{LY2022}, we consider the case of $n>15$
by replacing its global
integrability argument by a source-flux estimate and scalar comparison.
The local criterion~\cite[Proposition~2.3]{LY2022}, which is stated
without the restriction $n\le15$, remains an explicit input.
The distinction between constructing regular solutions and studying
arbitrary weak solutions also matters in relation to Luo's
high-dimensional stationary nonuniqueness construction~\cite{Luo2019}. 
Substantial progress has been made in the partial regularity theory
for suitable weak solutions of the stationary incompressible
Navier--Stokes equations. In dimension five, Struwe~\cite{Struwe1988}
established partial regularity and, in particular, proved regularity
near $x_0$ under the smallness condition
\begin{equation*}
    \limsup_{r\downarrow0} r^{-1}
    \int_{B_r(x_0)} |\nabla u|^2\,dx < \varepsilon_0.
\end{equation*}
The corresponding boundary theory was subsequently developed by
Kang~\cite{Kang2004}. Dimension six is critical for the standard
energy method: the Sobolev embedding
$H^1(B_1)\hookrightarrow L^3(B_1)$, with $B_1\subset\R^6$,
is continuous but not compact. Dong--Strain~\cite{DongStrain2012}
established interior partial regularity in this critical dimension.
Controlling the pressure remains a central difficulty in establishing
scale-invariant regularity criteria. More recently,
Li--Wang~\cite{LiWang2023} proved interior and boundary one-scale
regularity criteria in dimension six. In particular, their interior
velocity criterion removes the pressure-smallness assumption by
combining a localized pressure decomposition with an inductive
iteration. For further developments, we refer to
\cite{LiuWang2018,C2023,BGLWX2025,CWWYY2026} and the references therein.
Whether suitable weak solutions in dimensions five and six are
necessarily regular remains a major open problem.
The theorems below concern the existence of regular solutions
constructed through a continuation argument.


The behavior at infinity is a further part of the problem.
Jia--\v{S}ver\'ak~\cite{JS2018} studied high-dimensional stationary
asymptotics, while Kang--Miura--Tsai~\cite{KMT2018} described the
Stokes derivative profiles and moment expansions for fast decaying flows.
Once the large-force solutions and their uniform decay have been
constructed, this linear asymptotic machinery becomes available.
The third theorem below makes the resulting coefficients and
cancellations explicit.

\subsection{Main results}
One of the main results is stated as follows.
\begin{theorem}\label{thm:whole}
Let $n\ge16$, let $f\in L^\infty(\Rn;\Rn)$, and suppose
$\supp f\subset B_{R_0}(0)$ with $R_0\ge1$. Equation \eqref{eq:NS} has a regular
solution satisfying
\begin{equation}\label{eq:main-decay}
 |u(x)|\le C_f(1+|x|)^{2-n},\qquad
 |\nabla u(x)|+|p(x)|\le C_f(1+|x|)^{1-n}.
\end{equation}
Here $C_f$ depends only on $n$, $R_0$, and $\|f\|_\infty$. The pressure is the
decaying Stokes pressure specified in Section~\ref{sec:whole-apriori}. Moreover, 
smooth forces give smooth solutions.
\end{theorem}

\begin{remark}
Theorem~\ref{thm:whole}, together with Li--Yang's theorem for
$5\le n\le15$, gives regular-solution existence with Stokes decay
for every fixed $n\ge5$. The obstruction in the earlier proof is
\eqref{eq:LY-exponent-range}, rather than a restriction in the local
Bernoulli criterion. Here the signed identity and the $L^2$--BMO bounds
for the source flux yield the sublinear estimate (see
\eqref{eq:closure-improved}), which supplies the missing a priori bound
in the higher dimensions. The final continuation argument remains the
weighted Leray--Schauder construction (see more details in
Section~\ref{sec:existence} using Theorem~\ref{thm:LS-continuation}).
\end{remark}

The periodic existence results of \eqref{eq:NS} and the solutions with noncompact forces are also established. 
\begin{theorem}\label{thm:extensions}
For every fixed $n\ge16$, the following assertions hold.
\begin{enumerate}
\item If $f\in L^\infty(\Tn;\Rn)$ has mean zero on the unit flat torus,
there is a regular solution of \eqref{eq:NS} with
$\int_{\Tn}u=\int_{\Tn}p=0$. For every $1<q<\infty$,
\begin{equation}\label{eq:torus-main}
 \|u\|_{W^{2,q}(\Tn)}+\|p\|_{W^{1,q}(\Tn)}
 \le C_{n,q}(\|f\|_\infty).
\end{equation}
\item If $f\in L^\infty(\Rn;\Rn)\cap L^{2n/(n+2)}(\Rn;\Rn)$, there is a
regular solution with
\begin{equation}\label{eq:noncompact-main}
\begin{split}
 &\|\nabla u\|_2+\|u\|_{2n/(n-2)}+\|u\|_\infty+\|\nabla u\|_\infty\\
 &\quad+\|\theta_+\|_{2n/(n-2)}+\|\theta_+\|_\infty+\|(-\Delta)^{-1}\theta_+\|_\infty
 \le C_n(N),
\end{split}
\end{equation}
where $N=\|f\|_\infty+\|f\|_{2n/(n+2)}$, and $\theta$ is defined in \eqref{eq:Bernoulli-definition}. The pressure is normalized by
\begin{equation}\label{eq:noncompact-pressure}
 \begin{split}
 p&=p_N+p_f,\\
 p_N&=\partial_i\partial_j(-\Delta)^{-1}(u_i u_j)\in L^{n/(n-2)},\qquad
 p_f=-\partial_j(-\Delta)^{-1}f_j\in L^2.
 \end{split}
\end{equation}
Moreover,
\begin{equation}\label{eq:noncompact-decay-energy}
 \begin{gathered}
 |u(x)|+|\nabla u(x)|+|p(x)|\longrightarrow0\quad (|x|\to\infty),\\
 \|\nabla u\|_2^2=\int_{\Rn}f\cdot u.
 \end{gathered}
\end{equation}
\end{enumerate}
\end{theorem}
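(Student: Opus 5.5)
We prove the two parts by running the scheme behind Theorem~\ref{thm:whole} in two settings: first the torus, then a compactly supported approximation with a limit. The common engine is the a priori chain of the whole-space proof. The energy identity gives $\|\nabla u\|_2$. The averaged signed pressure-potential identity and the $L^2$--BMO bound for the Bernoulli source flux give a bound on $\sup\theta_+$ that does not depend on the skew drift coefficient. From these we get the sublinear bound \eqref{eq:closure-improved} for the Newtonian potential $(-\Delta)^{-1}\theta_+$. The local criterion \cite[Proposition~2.3]{LY2022} then turns these scalar bounds into local $L^\infty$ and $W^{2,q}$ control of $u$. Each part reduces to checking that the constants depend only on the stated norms and not on the support radius $R_0$.

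\textbf{Part (1).} On $\Tn$ with mean-zero data, the energy estimate gives $\|\nabla u\|_2\le C\|f\|_\infty$, by Poincar\'e and $\int u=0$. The Bernoulli function satisfies
\[
-\Delta\theta+u\cdot\nabla\theta=-|\omega|^2+f\cdot u .
\]
We replace the Newtonian potential by the periodic Green function applied to $\theta_+-\avint\theta_+$. The signed identity is averaged over the torus; compactness removes all far-field terms. The BMO estimate for the source flux $\nabla u\otimes u$ is purely local, so it carries over verbatim. This yields a bound on $\|\theta_+\|_\infty$ and on the periodic potential. The local criterion then gives $\|u\|_\infty$, and Stokes $L^q$ theory with bootstrapping gives \eqref{eq:torus-main}. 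Existence follows from Leray--Schauder on the compact Galerkin-free fixed-point map $v\mapsto$ Stokes solution with forcing $\lambda(f-v\cdot\nabla v)$. Periodic Stokes inversion is compact on $W^{1,q}$, so no weighted spaces are needed. The uniform a priori bound holds for $\lambda\in[0,1]$ because $\lambda f$ obeys the same norm bounds.

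\textbf{Part (2).} Set $f_k=f\chi_{B_k}$. Theorem~\ref{thm:whole} gives regular decaying solutions $(u_k,p_k)$ with the decomposition \eqref{eq:noncompact-pressure}. The main obstacle is to show that all the a priori bounds in that proof can be written with constants depending only on $N$, not on $R_0=k$. The energy identity, valid since the solutions decay, gives
\[
\|\nabla u_k\|_2^2=\int f_k\cdot u_k\le \|f\|_{2n/(n+2)}\|u_k\|_{2n/(n-2)},
\]
hence $\|\nabla u_k\|_2+\|u_k\|_{2n/(n-2)}\le C N$. In the signed identity and the flux estimates, I would track that $f$ enters only through $f\cdot u\in L^1$ and through $\|f\|_\infty$ in local balls of unit size. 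I would bound $\|\theta_{k,+}\|_{2n/(n-2)}$ using $p_N\in L^{n/(n-2)}$ and $p_f\in L^2$, together with $|u_k|^2\in L^{n/(n-2)}$ and interpolation. The sublinear bound for $(-\Delta)^{-1}\theta_{k,+}$ should close with constants in $N$ alone. The local criterion, applied on unit balls centered anywhere, then gives uniform $\|u_k\|_\infty+\|\nabla u_k\|_\infty$. It also gives uniform $W^{2,q}(B_1(x))$ bounds, so the whole of \eqref{eq:noncompact-main} holds uniformly in $k$.

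We extract a locally $W^{2,q}$-convergent subsequence and pass to the limit in the equation and the pressure representation. The norms in \eqref{eq:noncompact-main} are lower semicontinuous, so the bounds survive. For the energy equality, note first that $u\in L^{2n/(n-2)}\cap W^{1,\infty}$ forces $u\to0$ at infinity, by uniform continuity and integrability. Likewise $\nabla u\in L^2\cap C^{0,\alpha}$ gives $\nabla u\to0$, and $p$ decays the same way by its $L^{n/(n-2)}+L^2$ membership and local H\"older bounds. With this decay we test the limit equation against $u\,\phi(x/R)$ and let $R\to\infty$. The cubic and pressure boundary terms vanish because $|u|^2u$ and $pu$ are integrable, which follows from $u\in L^{2n/(n-2)}\cap L^\infty$, $p\in L^{n/(n-2)}+L^2$, and H\"older. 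This gives $\|\nabla u\|_2^2=\int f\cdot u$, completing \eqref{eq:noncompact-decay-energy}.
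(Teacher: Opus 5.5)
\textbf{Part~(1) has a genuine gap: the whole-space Bernoulli comparison does not carry over ``verbatim'' to $\Tn$.}

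Your Bernoulli equation drops the term $-\tau\Div f$, which is only a $W^{-1,\infty}$ distribution for bounded $f$. That term is why $\tau f$ sits inside the flux $F=\nabla(-\Delta)^{-1}g+\tau f$, with $g=\tau|f||u|$. The flux is not $\nabla u\otimes u$. Its BMO bound is also not local: it is controlled by the Morrey quantity $M\le C_f(1+K)$ from the signed identity, and this is exactly the loop to be closed.

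The more serious problem is the comparison step. In $\Rn$ you compare $\theta$ with the energy solution of $L_dv=g-\tau\Div f$, where $g\ge0$. On the torus, $L_d=-\Div((I+d)\nabla)$ annihilates constants and has mean-zero range. So this equation has no solution once $\bar g=\int g>0$. If you subtract the mean, then $L_d(\theta-v)=\bar g-\mu$ has no sign. Even a signed equation $L_dr=-\mu\le0$ on $\Tn$ only forces $\mu=0$ and $r$ constant, which gives no bound.

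The paper handles this zero mode with the coercive operator $1+L_d$. From $(1+L_d)\theta=\theta_++\bar g-\Div F-\widetilde\mu$ with $\widetilde\mu\ge0$, comparison gives $\theta\le R\theta_++\bar g+v$, where $R=(1+L_d)^{-1}$. The new term is absorbed by the skew-independent Nash smoothing estimate of Lemma~\ref{lem:periodic-smoothing}: $\|R\theta_+\|_\infty\le\frac14\|\theta_+\|_\infty+C_f$, see~\eqref{eq:zero-mode-absorb}. The signed identity is made one-sided with the positive kernel $\Gamma_0+c_0$, using $\int\theta=\frac12\int|u|^2\ge0$. The potential then closes at the weaker exponent $(n-4)/(2n)$ via $\|\theta_+\|_{n/(n-2)}$. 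Your plan has no substitute for this step, so $\|\theta_+\|_\infty$ on $\Tn$ is not established.

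A smaller point: the map $v\mapsto\text{Stokes}(\lambda(f-v\cdot\nabla v))$ has fixed points solving $-\Delta u+\lambda(u\cdot\nabla)u+\nabla p=\lambda f$. That is not Navier--Stokes with force $\lambda f$. To transfer the bounds you need $w=\lambda u$, which solves Navier--Stokes with force $\lambda^2f$, plus linear Oseen estimates for $u$.

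\textbf{Part~(2) follows the paper's route}: Proposition~\ref{prop:independent}, then approximation, identification of $p_N$ and $p_f$, Fatou, qualitative decay, and the cutoff energy identity. Two details are wrong as stated.

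First, the force does not enter only through $f\cdot u\in L^1$ and unit-ball $L^\infty$ norms. The signed-identity remainder $\partial_i\Psi*f_i$ has a kernel of size $|x|^{3-n}$, which is not integrable at infinity. It is bounded independently of the support only by pairing $f\in L^{q_*}$ with the far kernel in $L^{2^*}$, which is valid for $n>4$. Likewise, $\|F\|_2$ uses $\|f\|_{n/2}$ and $\|f\|_2$, both obtained by interpolating $N$.

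Your route to $\|\theta_+\|_{2^*}$ through $p_N\in L^{n/(n-2)}$ and $p_f\in L^2$ differs from the paper's, which reads $\|\theta_+\|_{2^*}\le\|v\|_{2^*}\le C_n(N)$ off the comparison energy, free of $K$. Your route does close. Interpolating with $\|\theta_+\|_\infty$ gives $\|\theta_+\|_{2^*}\le C(N)(1+\|\theta_+\|_\infty)^{1/2}$, hence $K\le C(N)(1+K)^{(n-4)/(2n)}$. You should carry out this exponent count rather than say it ``should close''.

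Second, in the energy equality, $pu\in L^1$ does not follow from your hypotheses. Since $2<2^*$, the facts $p_f\in L^2$ and $u\in L^{2^*}\cap L^\infty$ do not give $p_fu\in L^1$. Use the factor $R^{-1}$ instead. Set $A_R=B_{2R}\setminus B_R$; by $1/2^*+1/2+1/n=1$ and $|A_R|^{1/n}\le C_nR$, you get $R^{-1}\int_{A_R}|u||p_f|\le C_n\|u\|_{L^{2^*}(A_R)}\|p_f\|_{L^2(A_R)}\to0$. The same bound handles the term $u\,\nabla u\cdot\nabla\phi_R$.
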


\begin{remark}
The periodic existence results of Frehse--R\r{u}\v{z}i\v{c}ka
\cite{FR1995a,FR1995b} cover $5\le n\le15$.
In extending the flux argument to higher dimensions, a positive source
has nonzero mean and cannot be inverted by the mean-zero Laplacian.
Equations~\eqref{eq:torus-zero-mode}--\eqref{eq:zero-mode-absorb}
retain this mean and absorb the resolvent contribution, yielding
\eqref{eq:torus-intermediate}.
For noncompact forces, the constants in a compact-support theorem may
grow with the support radius. Li--Yang~\cite[Remark~1.2]{LY2022}
also discuss sufficiently decaying forces; the assumption in part~(2)
is instead a boundedness and integrability condition with no prescribed
pointwise decay. Proposition~\ref{prop:independent} supplies estimates
independent of the support, and the pressure split in
\eqref{eq:noncompact-pressure} allows the approximation to converge
with its normalization intact. The decay asserted in part~(2) is
qualitative.
\end{remark}

These are existence statements for regular solutions without a smallness
assumption. They do not assert uniqueness, regularity of every energy weak
solution, or a result for domains with boundary. Compact support is used for
the power decay in \eqref{eq:main-decay}; the noncompact-force statement gives
qualitative decay. In particular, no pointwise decay of the noncompact force
is required.

\begin{theorem}\label{thm:far-field}
Under the hypotheses of Theorem~\ref{thm:whole}, let $(u,p)$ be a solution obtained by its weighted construction, with the pressure normalized as in~\eqref{eq:main-decay}. Write $\omega_n=|S^{n-1}|$, $r=|x|$, $e=x/r$, and define
\begin{equation}\label{eq:far-moments}
 b_j=\int_{\Rn}f_j(y)\,dy,\qquad
 A_{kj}=\int_{\Rn}y_kf_j(y)\,dy+\int_{\Rn}u_k(y)u_j(y)\,dy.
\end{equation}
These integrals converge absolutely. For $e\in S^{n-1}$, let $Q_A(e)=n e^{\mathsf T}Ae-\operatorname{tr}A$. Define
\begin{gather}
 \begin{aligned}
 u^{(0)}(x)&=\frac{r^{2-n}}{2(n-2)\omega_n}
 \bigl[b+(n-2)(b\cdot e)e\bigr],\\
 p^{(0)}(x)&=\frac{b\cdot e}{\omega_n r^{n-1}},
 \end{aligned}\label{eq:far-monopole}\\
 \begin{aligned}
 u^{(1)}(x)&=\frac{r^{1-n}}{2\omega_n}
 \bigl[(A^{\mathsf T}-A)e+Q_A(e)e\bigr],\\
 p^{(1)}(x)&=\frac{Q_A(e)}{\omega_n r^n}.
 \end{aligned}\label{eq:far-dipole}
\end{gather}
Then
\begin{equation}\label{eq:far-expansion}
 u=u^{(0)}+u^{(1)}+R_u,\qquad p=p^{(0)}+p^{(1)}+R_p,
\end{equation}
where, for $r\ge2(R_0+1)$,
\begin{equation}\label{eq:far-remainder}
 |R_u(x)|+r|\nabla R_u(x)|+r|R_p(x)|\le C_f r^{-n}.
\end{equation}
In particular,
\begin{equation}\label{eq:far-leading-remainder}
 |u-u^{(0)}|+r|\nabla(u-u^{(0)})|+r|p-p^{(0)}|\le C_f r^{1-n}.
\end{equation}
Here $C_f$ depends only on $n,R_0$, and $\norm{f}_\infty$. The following consequences hold, with all asymptotic statements uniform in direction.
\begin{enumerate}
\item If $b\ne0$, there are positive dimensional constants $c_n,C_n$ and a radius $R_b$ such that
\[
 c_n|b|r^{2-n}\le |u(x)|\le C_n|b|r^{2-n}\qquad(r\ge R_b).
\]
The radius may depend on the data and on $|b|$.
\item One has $b=0$ if and only if $u(x)=o(r^{2-n})$. In this case,
\begin{equation}\label{eq:far-zero-force}
 |u(x)|+r\bigl(|\nabla u(x)|+|p(x)|\bigr)\le C_f r^{1-n}.
\end{equation}
\item If $b=0$, then $u(x)=O(r^{-n})$ if and only if $A=cI$ for some $c\in\R$. Under this condition,
\begin{equation}\label{eq:far-scalar-moment}
 |u(x)|+r\bigl(|\nabla u(x)|+|p(x)|\bigr)\le C_f r^{-n}.
\end{equation}
\end{enumerate}
In particular, a compactly supported force satisfying $\Div f=0$ in distributions has $b=0$ and yields~\eqref{eq:far-zero-force}.
\end{theorem}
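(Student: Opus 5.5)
Since Theorem~\ref{thm:whole} already supplies the decay \eqref{eq:main-decay}, the far-field statement reduces to linear Stokes asymptotics. I would write the equation as a Stokes system with a divergence-form source,
\[
 -\Delta u+\nabla p=f-\partial_k(u_ku),\qquad \Div u=0,
\]
and represent the decaying solution through the Oseen tensor $E$ and pressure kernel $P$:
\[
 u_i=E_{ij}*f_j+\partial_kE_{ij}*(u_ku_j),\qquad p=P_j*f_j+\partial_kP_j*(u_ku_j)+c.
\]
Here $c=0$ because of the normalization from Section~\ref{sec:whole-apriori}. To justify this representation, note that the difference between $u$ and the right-hand side is a tempered solution of the homogeneous Stokes system that decays at infinity, so Liouville forces it to vanish. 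Absolute convergence of $b$ and $A$ is immediate: $f$ is bounded with compact support, and $|u|^2\le C(1+|y|)^{4-2n}$ is integrable since $2n-4>n$.

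\textbf{Force term.} For $|x|\ge2(R_0+1)$, I would Taylor expand $E(x-y)$ and $P(x-y)$ to second order in $y$. The zeroth-order term gives $E(x)b$ and $P(x)\cdot b$, which are exactly $u^{(0)},p^{(0)}$ with the standard constants $\frac{1}{2(n-2)\omega_n}$ and $\frac{1}{\omega_n}$. The first-order term gives $-\partial_kE_{ij}(x)\int y_kf_j$. The Taylor remainder is bounded by $CR_0^2\|f\|_\infty R_0^n\,r^{-n}$, with one extra power of $r^{-1}$ for each derivative and for the pressure.

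\textbf{Convection term.} I would split $\partial_kE*(u_ku_j)$ at $|y|<r/2$ and $|y|\ge r/2$.
\begin{itemize}
\item On the inner region, expand $\partial_kE(x-y)=\partial_kE(x)+O(|y|r^{-n})$. The first piece gives $\partial_kE_{ij}(x)\int u_ku_j$ minus a tail of size $r^{1-n}\cdot r^{4-n}$. The error piece is bounded by $r^{-n}\int_{|y|<r/2}|y|(1+|y|)^{4-2n}\,dy\le Cr^{-n}$, since $5-2n+n<0$ for $n\ge16$.
\item On the outer region, use $|u|^2\le Cr^{4-2n}$ together with the local integrability of $|\nabla E|\sim|z|^{1-n}$. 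Treating $|x-y|<r/2$ and $|x-y|>r/2$ separately gives $O(r^{4-2n}\cdot r)+O(r^{1-n}r^{4-n})=O(r^{5-2n})\le O(r^{-n})$.
\end{itemize}
For the gradient I would use the same splitting with one extra derivative on the kernel. There the inner singular piece is handled through $\nabla(u_ku_j)$, using $|\nabla u|\,|u|\le Cr^{3-2n}$, or through Calder\'on--Zygmund with the H\"older continuity of $u\otimes u$ from regularity. The pressure is treated the same way.

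\textbf{Identifying the dipole.} Combining both first-order contributions gives $-\partial_kE_{ij}(x)A_{kj}$, and $-\partial_kP_j(x)A_{kj}$ for the pressure. Computing $\partial_kE_{ij}$ and $\partial_kP_j=\omega_n^{-1}(\delta_{jk}-ne_je_k)r^{-n}$ explicitly, and then contracting with $A$, should produce $p^{(1)}=Q_A(e)/(\omega_nr^n)$ and the velocity formula $u^{(1)}$ in \eqref{eq:far-dipole}. The antisymmetric part of $A$ generates the rotlet term $(A^{\mathsf T}-A)e$. I expect this tensor bookkeeping to be the main obstacle, namely getting the signs and the trace part right. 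I would check it against the known Kang--Miura--Tsai profiles. This proves \eqref{eq:far-expansion}--\eqref{eq:far-leading-remainder}.

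\textbf{Consequences.}
\begin{enumerate}
\item For the lower bound when $b\ne0$, use $|b+(n-2)(b\cdot e)e|^2=|b|^2+(n^2-4)(b\cdot e)^2\ge|b|^2$. Together with \eqref{eq:far-leading-remainder}, this gives the two-sided bound once $C_fr^{1-n}\le\frac{|b|}{4(n-2)\omega_n}r^{2-n}$.
\item The equivalence $b=0\iff u=o(r^{2-n})$ follows from item (1) in one direction, and from \eqref{eq:far-leading-remainder} with $u^{(0)}=0$ in the other, which also yields \eqref{eq:far-zero-force}.
\item Suppose $b=0$. Then $u=O(r^{-n})$ holds iff $u^{(1)}\equiv0$ on the sphere. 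Vanishing of the tangential component gives $(A^{\mathsf T}-A)e\parallel e$ for all $e$, hence $A$ is symmetric. The radial component then gives $Q_A(e)=0$, i.e.\ $e^{\mathsf T}Ae=\operatorname{tr}A/n$ for all $e$, which forces $A=cI$. The converse is direct, and \eqref{eq:far-remainder} then gives \eqref{eq:far-scalar-moment}.
\end{enumerate}
Finally, if $\Div f=0$ in distributions, testing against $y_j$ cut off to $1$ near $\supp f$ shows $b_j=-\langle\Div f,y_j\rangle=0$.
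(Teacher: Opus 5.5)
Your proposal is correct in substance and reaches the same expansion. The difference from the paper is how you treat the convection term.

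\textbf{How the two routes differ.} The paper keeps the effective force in non-divergence form, $G_j=f_j-u_\ell\partial_\ell u_j$, which by \eqref{eq:main-decay} is pointwise $O((1+|y|)^{3-2n})$. It identifies $\int G_j=b_j$ and $\int y_kG_j=A_{kj}$ by a cutoff integration by parts. It then applies a single Taylor-remainder estimate to the three kernels $U$, $\nabla U$, $P$ alike: a second-order expansion about $x$ on $|y|<r/2$, plus tail bounds. All three kernels are locally integrable, so $u=U*G$, $\nabla u=\nabla U*G$ and $p=P*G$ are absolutely convergent. These representations hold by construction (the fixed-point equation and the definition of $p$), so your Liouville step is not needed, though it is harmless.

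You instead put the derivative on the kernel. This gives you the quadratic moment $\int u_ku_j$ directly, without the moment identification, and for the velocity you only need a first-order expansion of $\nabla U$. The cost comes in $\nabla u$ and $p$. There the convection kernels $\nabla\partial_kU_{ij}$ and $\partial_kP_j$ are homogeneous of degree $-n$, so $\partial_kP_j*(u_ku_j)$ is a principal-value integral plus a local multiple of $|u|^2$, not an absolutely convergent convolution. Near $y=x$ you must undo the integration by parts on $B_{r/2}(x)$, or appeal to Calder\'on--Zygmund theory. Undoing it, with $|u|\,|\nabla u|\le Cr^{3-2n}$, gives a contribution $O(r^{4-2n})\le O(r^{-n-1})$. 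Your sketch points at this correctly, and it is exactly the step the paper's non-divergence formulation avoids. Both routes give the remainder bounds and the same dipole coefficients.

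\textbf{Two slips to fix.} First, since $E*\partial_kh=(\partial_kE)*h$, the correct representations are $u_i=E_{ij}*f_j-\partial_kE_{ij}*(u_ku_j)$ and $p=P_j*f_j-\partial_kP_j*(u_ku_j)$. With the plus sign you display, the two first-moment contributions would combine to $\int y_kf_j-\int u_ku_j$ rather than $A_{kj}$. Your later claim that they combine to $-\partial_kE_{ij}(x)A_{kj}$ is right only after this sign is corrected; this is the positive sign of the quadratic part of $A$ that the paper emphasizes. Second, $|b+(n-2)(b\cdot e)e|^2=|b|^2+n(n-2)(b\cdot e)^2$, not $|b|^2+(n^2-4)(b\cdot e)^2$. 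This does not affect the lower bound, and it gives the upper constant $(n-1)|b|$ in part (1).
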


\begin{remark}
The Stokes derivative profiles and their scalar-matrix ambiguity are
already present in~\cite[Lemma~4.1, Remark~4.2 and
Proposition~4.5]{KMT2018}. The Navier--Stokes statement in
\cite[Theorem~4.8]{KMT2018} imposes smallness and fast decay.
Theorem~\ref{thm:whole} provides the decay and moment integrability
needed here for arbitrary bounded compactly supported forces, so no
additional smallness is needed in Theorem~\ref{thm:far-field}.
The contribution of this application is the joint velocity, gradient
and pressure remainder estimate and the explicit cancellation tests
for these constructed solutions. When $b=0$ and $A\notin\R I$, the dipole is nonzero as a
profile but may vanish in individual directions.
\end{remark}

\subsection{Notation and the closure estimates}
For $n\ge3$ we use
\[
 \Hdot(\Rn)=\{w\in L^{\frac{2n}{n-2}}(\Rn):\nabla w\in L^2(\Rn)\},
 \qquad \|w\|_{\Hdot}=\|\nabla w\|_2.
\]
This is the homogeneous Sobolev space with its specified $L^{2^*}$
representative, not a space with an arbitrary additive constant. It is the
completion of $C_c^\infty$ for the gradient norm. If $F$ is a locally
integrable scalar or vector field, let
\[
 [F]_{\BMO}:=\sup_B\avint_B|F-F_B|,
 \qquad F_B=\avint_B F.
\]
Componentwise seminorms are equivalent, with dimensional constants. Sobolev
spaces are denoted by $W^{k,q}$; BMO is a space of bounded mean oscillation,
not a space of essentially bounded functions. We use the standard conjugate exponents
\begin{equation}\label{eq:exponents-basic}
 q_*:=\frac{2n}{n+2},\qquad 2^*:=\frac{2n}{n-2},
 \qquad \frac1{q_*}+\frac1{2^*}=1.
\end{equation}
The inverse $(-\Delta)^{-1}$ denotes convolution on $\Rn$ with
$\Gamma(x)=|x|^{2-n}/((n-2)\omega_n)$, where
$\omega_n=|S^{n-1}|$. Periodic inverses
will be defined separately on mean-zero distributions.
For the Bernoulli function in \eqref{eq:Bernoulli-definition}, let
\begin{equation}\label{eq:KMZ}
 K:=\|(-\Delta)^{-1}\theta_+\|_\infty,\qquad
 M:=\sup_{z\in\Rn,\ r>0}r^{2-n}\int_{B_r(z)}|u|^2.
\end{equation}
All unmarked whole-space norms are over $\Rn$.
The notation $C_u$ allows dependence on an individual weighted fixed
point, whereas $C_f$ depends only on the data in the theorem under
consideration. Individual finiteness is established before any uniform
estimate is used.

For the homotopy parameter $0\le\tau\le1$, the source flux is
\begin{equation*}
 F=\nabla(-\Delta)^{-1}(\tau|f||u|)+\tau f.
\end{equation*}
The signed identity gives $M\le C_f(1+K)$, and the flux bounds give
\begin{equation*}\label{eq:intro-H}
 \|\theta_+\|_\infty\le C_f(1+K)^{(n-2)/(2n)},\qquad
 \|\theta_+\|_{2^*}\le C_f.
\end{equation*}
Thus the whole-space potential closes as
\begin{equation*}
 K\le C_f\|\theta_+\|_\infty^{(n-6)/(n-2)}
   \le C_f(1+K)^{(n-6)/(2n)}.
\end{equation*}
The periodic argument retains the constant mode and uses instead
\begin{equation*}
 K\le C_f(1+K)^{(n-4)/(2n)}.
\end{equation*}
Both exponents are less than one for every dimension considered here.
\subsection{Main ideas}
\label{subsec:main-ideas}
The proof of Theorem~\ref{thm:whole} retains the weighted Stokes integral
formulation and Leray--Schauder construction of Li--Yang~\cite{LY2022}. We first establish a priori estimates for the
homotopy fixed points and close the Newtonian potential $K$ of the
positive Bernoulli function. The existing local regularity criteria,
energy tightness, and a finite Stokes bootstrap then yield the uniform
weighted bound required for degree theory. The main new ingredient is
this global a priori closure and the argument concerns the construction of
regular solutions. 
The dimension restriction in the earlier argument arises after the basic
energy estimate. Its Bernoulli estimate~\cite[eq.~(2.11)]{LY2022}
controls $\theta_+$ in $L^{r_q}$ in terms of a local $L^q$ norm of $u$,
where $r_q=nq/(n-2q)$. The local criterion requires $r_q>n/2$, or
$q>n/4$. The accompanying velocity estimate
\cite[Lemma~2.11, eq.~(2.19)]{LY2022}, however, requires the range
in~\eqref{eq:LY-exponent-range}, namely
\[
 \max\{2,n/4\}<q<\min\{4,n/2\}.
\]
More specifically, the H\"older step associated with their eq.~(2.21),
used to estimate
$$(-\Delta)^{-1}(|p|^{(q-2)/2}\operatorname{sgn}(p)\eta)$$
uses the conjugate exponents $2/(q-2)$ and $2/(4-q)$, and hence requires
$2<q<4$. The admissible interval is empty for $n\ge16$; this obstructs
the use of their estimate (2.19), not the Bernoulli identity itself;
see~\cite[Remark~2.12]{LY2022}. Even an extension to $q=4$ would give
only $r_q=n/2$ when $n=16$. Our new observations are as follows.

\paragraph{\textbf{(i) A signed pressure-potential identity and center averaging.}} Different from \cite{LY2022}, we introduce a new quantity, the positive Bernoulli potential
$K=\|(-\Delta)^{-1}\theta_+\|_\infty$, to connect the
signed pressure identity with the source-flux estimates
and close the global a priori bound.
Lemma~\ref{lem:signed} combines the normalized pressure with the kinetic
energy through the exact identity~\eqref{eq:signed}:
\[
 (-\Delta)^{-1}\theta(y)
 =\frac1{2\omega_n}\int_{\Rn}
   \frac{|u(x)\cdot(x-y)|^2}{|x-y|^n}\,dx+\mathcal F_\tau(y),
 \qquad \|\mathcal F_\tau\|_\infty\le C_f.
\]
Since $\theta\le\theta_+$ and the Newtonian kernel is positive, the
directional integral is bounded by $C_f(1+K)$. Averaging over its center
recovers all velocity components and gives
$M\le C_f(1+K)$; see~\eqref{eq:spherical-average}--\eqref{eq:Morrey-bound}.
This reflects the endpoint cancellation in the weighted pressure
identity~\cite[eq.~(2.17)]{LY2022}: the isotropic coefficient
$s(n-4-s)/2$ vanishes at $s=n-4$, while the directional square survives.
Here the endpoint identity is derived directly from the kernels
in~\eqref{eq:GammaPsi}, and center averaging recovers the Morrey control.
Thus no extension of the earlier nonlinear pressure test to $q\ge4$
is needed.

\paragraph{\textbf{(ii) Complementary estimates for the Bernoulli source flux.}}
At the global a priori stage, we estimate the source flux directly. Let
\[
 g=\tau|f||u|,\qquad
 F=\nabla(-\Delta)^{-1}g+\tau f.
\]
The Bernoulli equation takes the form
\[
 L_d\theta=-\Div F-\mu,\qquad 0\le\mu\in L^1,
\]
where $\mu$ is given by~\eqref{eq:comparison-mu} and
$L_d=-\Div((I+d)\nabla)=-\Delta+u\cdot\nabla$.
The skew potential $d$, with $d^{\mathsf T}=-d$, is bounded for each
individual fixed point. Energy gives
\[
 \|g\|_{q_*}\le\|f\|_{n/2}\|u\|_{2^*}\le C_f,
\]
whereas Morrey control gives
$\int_{B_r(z)}g\le C_fM^{1/2}r^{n-1}$.
Lemma~\ref{lem:newton-flux} therefore yields
\[
 \|F\|_2\le C_f,\qquad
 [F]_{\BMO}\le C_f(1+K)^{1/2},
\]
as recorded in~\eqref{eq:F-L2} and~\eqref{eq:F-BMO}.
The first bound contains no power of $K$. The BMO estimate concerns the
\emph{source flux $F$}, rather than the drift potential $d$, and the
derivative of the bounded force remains in divergence form.

\paragraph{\textbf{(iii) Scalar comparison and sublinear potential closure.}}
Let $v\in\Hdot$ solve $L_dv=-\Div F$. The scalar estimate and comparison
principle (Lemmas~\ref{lem:flux-sup} and~\ref{lem:comparison}) give
\[
 \theta\le v,\qquad
 \|\theta_+\|_\infty\le\|v\|_\infty
 \le C_n\|F\|_2^{2/n}[F]_{\BMO}^{1-2/n}
 \le C_f(1+K)^{(n-2)/(2n)}.
\]
The scalar constant is independent of $\|d\|_\infty$, because the skew
part vanishes in every truncation energy test. The energy estimate for
$v$, together with $\theta_+\le v_+$, also gives
$\|\theta_+\|_{2^*}\le C_f$ without any power of $K$;
see~\eqref{eq:comparison-energy}--\eqref{eq:h-2star} in
Proposition~\ref{prop:head}. Since $n\ge16>6$, Lemma~\ref{lem:potential}
applies with exponent $2^*$ and yields
\[
 K\le C_n\|\theta_+\|_{2^*}^{4/(n-2)}
          \|\theta_+\|_\infty^{(n-6)/(n-2)}
   \le C_f(1+K)^{(n-6)/(2n)}.
\]
Because $(n-6)/(2n)<1$ and $K$ is finite for each fixed point by
Lemma~\ref{lem:initial}, this bounds $K$, $M$, and
$\|\theta_+\|_\infty$ uniformly. The resulting whole-space closure
replaces the pressure--velocity integrability loop without requiring
$n/4<q<4$ or a quantitative heat-kernel estimate for the drift operator.

The new contribution is this combination of the signed pressure-potential
identity, source-flux control, and comparison energy to close the global
potential estimate without an upper dimension restriction.
Sections~\ref{sec:scalar}--\ref{sec:existence}
implement the whole-space proof. Section~\ref{sec:torus} extends the
periodic framework of~\cite{FR1995a,FR1995b}, retaining the mean of the
positive source and the constant mode of the resolvent, with closure
exponent $(n-4)/(2n)$. Sections~\ref{sec:noncompact}
and~\ref{sec:far-field} treat support-independent approximation and
far-field expansions.

\section{A scalar estimate independent of the skew coefficient}\label{sec:scalar}

In this section we establish the scalar estimates
underlying the Bernoulli argument. The central result,
Lemma~\ref{lem:flux-sup}, converts $L^2$ and BMO control of a source
flux into an $L^\infty$ bound for the corresponding energy solution,
with a constant independent of the size of the bounded skew coefficient.
This independence is essential: the drift potential associated with
each homotopy fixed point is bounded, but its norm is not controlled
uniformly at this stage. 
\begin{lemma}\label{lem:interpolation}
For $2<q<\infty$ and $F\in L^2(\Rn)\cap\BMO(\Rn)$,
\begin{equation}\label{eq:interpolation}
 \|F\|_q\le C_{n,q}\|F\|_2^{2/q}[F]_{\BMO}^{1-2/q}.
\end{equation}
If $F\in L^2(\Tn)$ and its periodic extension $F_{\mathrm{per}}$ belongs to
$\BMO(\Rn)$, then
\begin{equation}\label{eq:interpolation-torus}
 \|F\|_{L^q(\Tn)}
 \le C_{n,q}\|F\|_{L^2(\Tn)}^{2/q}
 \bigl([F_{\mathrm{per}}]_{\BMO}+\|F\|_{L^2(\Tn)}\bigr)^{1-2/q}.
\end{equation}
\end{lemma}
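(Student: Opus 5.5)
The plan is to derive \eqref{eq:interpolation} from the sharp maximal function and the Fefferman--Stein inequality, and then reduce the periodic estimate \eqref{eq:interpolation-torus} to a statement about a single fundamental cell.

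\textbf{Whole-space estimate.} Let $F^\#$ denote the sharp maximal function, so that $\|F^\#\|_\infty\le C_n[F]_{\BMO}$. Since $F\in L^2$, the Hardy--Littlewood maximal function $\mathcal MF$ lies in $L^2$. The Fefferman--Stein inequality $\|F\|_q\le C_{n,q}\|F^\#\|_q$ therefore applies for $q>2$; its hypothesis $F\in L^{p_0}$ for some $p_0\le q$ is met with $p_0=2$. Pointwise we have $F^\#\le 2\mathcal MF$, hence
\[
\|F^\#\|_q^q\le\|F^\#\|_\infty^{q-2}\int (F^\#)^2\le C[F]_{\BMO}^{q-2}\|\mathcal MF\|_2^2\le C[F]_{\BMO}^{q-2}\|F\|_2^2 .
\]
Taking $q$-th roots gives \eqref{eq:interpolation}. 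An alternative is Stampacchia-type interpolation of the sublinear operator $F\mapsto F^\#$ between $L^2\to L^2$ and $L^\infty$, but the pointwise route above is more direct.

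\textbf{Periodic estimate.} Write $F=\bar F+G$, where $\bar F=\int_{\Tn}F$ (the torus has unit volume) and $G$ has mean zero. The constant part satisfies $|\bar F|\le\|F\|_{L^2(\Tn)}$, so $\|\bar F\|_{L^q}\le \|F\|_2=\|F\|_2^{2/q}\|F\|_2^{1-2/q}$, which is dominated by the right-hand side of \eqref{eq:interpolation-torus}.

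For $G$, I would prove a John--Nirenberg-type good-$\lambda$ estimate on the cube $Q_0=[0,1)^n$. Apply the Calder\'on--Zygmund decomposition of $|G|$ relative to $Q_0$, localized with the dyadic sharp maximal function. This gives the local Fefferman--Stein inequality
\[
\|G-G_{Q_0}\|_{L^q(Q_0)}\le C\|F^\#_{\mathrm{per},Q_0}\|_{L^q(Q_0)},
\]
where $G_{Q_0}=0$. Then interpolate exactly as in the whole-space case, using $F^\#\le C[F_{\mathrm{per}}]_{\BMO}$ together with $\|\mathcal M_{Q_0}G\|_{L^2(Q_0)}\le C\|G\|_2$.

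A cleaner alternative is to cut off. Set $H=\chi F_{\mathrm{per}}$, where $\chi$ is a smooth bump equal to $1$ on $Q_0$ and supported in $3Q_0$. Then $\|H\|_{L^2(\Rn)}\le C\|F\|_{L^2(\Tn)}$. The BMO seminorm satisfies $[H]_{\BMO}\le C([F_{\mathrm{per}}]_{\BMO}+\|F\|_{L^2(\Tn)})$: on balls of radius at most $1$ one uses $|\chi(x)-\chi(y)|\le C|x-y|$ and bounds the averages of $|F_{\mathrm{per}}|$ over unit-scale balls by $C\|F\|_2$; on larger balls the mean oscillation is at most $2\avint|H|\le C\|F\|_2$. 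Applying \eqref{eq:interpolation} to $H$ and restricting to $Q_0$ then yields \eqref{eq:interpolation-torus}. I will use this cutoff route.

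\textbf{Main obstacle.} The delicate step is the cutoff BMO estimate on small balls. There one must control the product oscillation $\avint_B|\chi F-(\chi F)_B|$ by splitting it as $\chi(F-F_B)+F_B(\chi-\chi_B)+\ldots$, and handle the term $|F_B|\,r\,\|\nabla\chi\|_\infty$. When $r\le1$, the average $|F_B|$ is bounded by the average over the unit ball plus $C(1+\log(1/r))[F]_{\BMO}$. The factor $r\log(1/r)$ is bounded, so this term is absorbed, and the claimed estimate follows.
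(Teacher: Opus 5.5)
Your proposal is correct. For the periodic estimate \eqref{eq:interpolation-torus} you follow essentially the paper's route. The paper also multiplies $F_{\mathrm{per}}$ by a fixed bump equal to one on a fundamental cube. It bounds $|(F_{\mathrm{per}})_{B_r(z)}|\le C_nE+C_nB\log(2/r)$ by successive doubling plus Cauchy--Schwarz on unit cells. It then uses the comparison constant $\eta_{B_r(z)}(F_{\mathrm{per}})_{B_r(z)}$, so that the factor $r\log(2/r)$ is harmless. Balls of radius at least one are handled through the $L^1$ bound of the compactly supported product. This is precisely your splitting $\chi(F-F_B)+F_B(\chi-\chi_B)$ and your large-ball argument.

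The genuine difference is in the whole-space inequality \eqref{eq:interpolation}. The paper does not prove it. It quotes the Lorentz-space interpolation $\|F\|_q\lesssim\|F\|_{L^{2,\infty}}^{2/q}[F]_{\BMO}^{1-2/q}$ from \cite[Theorem~9.1]{MRR2013} and uses $L^2\subset L^{2,\infty}$. You instead derive it from the Fefferman--Stein inequality, the pointwise bound $F^\#\le 2\mathcal MF$, and $\|F^\#\|_\infty\le[F]_{\BMO}$, and you correctly check the a priori hypothesis $\mathcal M_dF\le\mathcal MF\in L^2$. Your argument is self-contained modulo the classical Fefferman--Stein theorem. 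The cited result is stronger, since it needs only weak-$L^2$ control, but the paper never uses that generality because $\|F\|_2$ is what it controls.
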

\begin{proof}

Formula \eqref{eq:interpolation} follows from
\cite[Theorem~9.1]{MRR2013} by $L^2\subset L^{2,\infty}$; the Lorentz-space
extension is discussed in~\cite{DHHN2019}.

For the periodic assertion, write $E=\|F\|_{L^2(\Tn)}$ and
$B=[F_{\mathrm{per}}]_{\BMO}$. Choose a fixed smooth compactly supported
function $\eta$ that equals one on a fundamental cube and is supported in a
fixed bounded union of translated cubes. Clearly
$\|\eta F_{\mathrm{per}}\|_2\le C_nE$. We verify
\begin{equation}\label{eq:cutoff-bmo}
 [\eta F_{\mathrm{per}}]_{\BMO}\le C_n(B+E).
\end{equation}
For a ball $B_r(z)$ with $r<1$, compare the averages of $F_{\mathrm{per}}$ on
successive doubled balls until their radius lies in $[1,2]$. Every difference
of two successive averages is bounded by $C_nB$. The final average is bounded
by $C_nE$, by periodicity and Cauchy--Schwarz on a fixed number of cells. Hence
\[
 |(F_{\mathrm{per}})_{B_r(z)}|
 \le C_nE+C_nB\log(2/r).
\]
Use the comparison constant
$c=\eta_{B_r(z)}(F_{\mathrm{per}})_{B_r(z)}$ in the mean oscillation of the
product. Its average deviation from $c$ is at most
\[
 C_nB+C_nr\bigl(E+B\log(2/r)\bigr)\le C_n(B+E).
\]
Replacing $c$ by the actual product average costs at most a factor two. On
balls of radius at least one, compact support of $\eta$ and the bound
$\|\eta F_{\mathrm{per}}\|_1\le C_nE$ prove the same estimate. This proves
\eqref{eq:cutoff-bmo}. Apply the whole-space assertion to
$\eta F_{\mathrm{per}}$ and restrict to the fundamental cube.
\end{proof}

\begin{lemma}\label{lem:flux-sup}
Let $n\ge3$, and let $b\in L^\infty(\Rn;\R^{n\times n})$ be skew-symmetric.
If $F\in L^2(\Rn;\Rn)\cap\BMO(\Rn;\Rn)$, the unique energy solution
$w\in\Hdot(\Rn)$ of
\begin{equation}\label{eq:scalar}
 L_bw:=-\Div((I+b)\nabla w)=-\Div F
\end{equation}
satisfies
\begin{equation}\label{eq:scalar-sup}
 \|w\|_\infty\le C_n\|F\|_2^{2/n}[F]_{\BMO}^{1-2/n}.
\end{equation}
The constant is independent of $\|b\|_\infty$. On $\Tn$, if $b$ is a bounded
periodic skew matrix and $F\in L^2(\Tn)$ has BMO periodic extension, the unique
weak solution $w\in W^{1,2}(\Tn)$ of
\begin{equation}\label{eq:scalar-torus}
 (1+L_b)w=-\Div F
\end{equation}
satisfies
\begin{equation}\label{eq:scalar-sup-torus}
 \|w\|_\infty\le C_n\|F\|_2^{2/n}
 \bigl([F_{\mathrm{per}}]_{\BMO}+\|F\|_2\bigr)^{1-2/n},
\end{equation}
again independently of the size of the bounded skew coefficient.
\end{lemma}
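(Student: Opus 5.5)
We plan a De Giorgi--Stampacchia truncation argument in which the skew part of the coefficient drops out of every energy test. Existence and uniqueness of $w\in\Hdot$ come from Lax--Milgram. The form $\int(I+b)\nabla w\cdot\nabla\varphi$ is bounded, because $b\in L^\infty$, and it is coercive with constant one, because $b\nabla w\cdot\nabla w=0$ pointwise by skew-symmetry. The functional $\varphi\mapsto\int F\cdot\nabla\varphi$ is bounded on $\Hdot$ by $\|F\|_2$.

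For the sup bound, fix $k\ge0$ and test with $(w-k)_+\in\Hdot$. On $A_k=\{w>k\}$ the skew term vanishes again, so
\[
\int_{A_k}|\nabla w|^2=\int_{A_k}F\cdot\nabla w\le\Bigl(\int_{A_k}|F|^2\Bigr)^{1/2}\Bigl(\int_{A_k}|\nabla w|^2\Bigr)^{1/2}.
\]
Hence $\|\nabla(w-k)_+\|_2\le\|F\|_{L^2(A_k)}$, and this constant does not depend on $\|b\|_\infty$.

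Next we bring in higher integrability. Choose $q>n$; Lemma~\ref{lem:interpolation} gives $F\in L^q$ with $\|F\|_q\le C\|F\|_2^{2/q}[F]_{\BMO}^{1-2/q}$. Hölder then yields $\|F\|_{L^2(A_k)}\le\|F\|_q|A_k|^{1/2-1/q}$. For $h>k$, Sobolev gives
\[
(h-k)|A_h|^{1/2^*}\le\|(w-k)_+\|_{2^*}\le C_n\|F\|_q|A_k|^{1/2-1/q}.
\]
This is the Stampacchia recursion $|A_h|\le \bigl(C\|F\|_q/(h-k)\bigr)^{2^*}|A_k|^{\beta}$ with $\beta=2^*(1/2-1/q)>1$, and $\beta>1$ exactly when $q>n$. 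Stampacchia's lemma gives $|A_{k_0+d}|=0$ with
\[
d=C\|F\|_q|A_{k_0}|^{(\beta-1)/2^*}=C\|F\|_q|A_{k_0}|^{1/n-1/q}.
\]
We start from a level $k_0>0$ and use Chebyshev, $|A_{k_0}|\le k_0^{-2^*}\|w\|_{2^*}^{2^*}\le k_0^{-2^*}(C\|F\|_2)^{2^*}$. We then optimize over $k_0$, and apply the same argument to $-w$, which solves the equation with $-F$.

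The main obstacle is the bookkeeping that produces exactly the homogeneity $\|F\|_2^{2/n}[F]_{\BMO}^{1-2/n}$. The scalings are consistent: under $F\mapsto F(\lambda\cdot)$ the solution $w$ scales like $\lambda^{-1}w(\lambda\cdot)$, and the right-hand side has the matching scaling. So after optimizing $k_0$ the powers should combine correctly. A cleaner way is to normalize by scaling and multiplication so that $\|F\|_2=[F]_{\BMO}=1$, bound $\|w\|_\infty\le C_n$, and then undo the normalization. The two-parameter invariance (amplitude and dilation) makes $\|F\|_2^{2/n}[F]_{\BMO}^{1-2/n}$ the only admissible combination, which gives \eqref{eq:scalar-sup}. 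In the normalized case we fix $q=2n$ and take $k_0=1$.

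On $\Tn$ the same scheme applies to $(1+L_b)w=-\Div F$. We test with $(w-k)_+$ for $k\ge0$; the zeroth-order term contributes $\int w(w-k)_+\ge0$, which only helps. The energy estimate $\|w\|_{W^{1,2}}\le\|F\|_2$ replaces $\|w\|_{2^*}$. We use the inhomogeneous Sobolev inequality on the torus for $(w-k)_+$, whose $L^2$ part is controlled by the zeroth-order term, together with the periodic interpolation \eqref{eq:interpolation-torus}. Since $|\Tn|=1$ there is no scaling freedom, so we simply take $q=2n$ and $k_0$ equal to a multiple of $\|F\|_2$. This yields a bound by $C\|F\|_{2n}$ plus $C\|F\|_2$. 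The combination $\|F\|_2^{2/n}(B+\|F\|_2)^{1-2/n}$ dominates both terms, using $\|F\|_2\le B+\|F\|_2$, and this gives \eqref{eq:scalar-sup-torus}.
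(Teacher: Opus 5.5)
Your whole-space argument is correct and is essentially the paper's proof. It uses the same Lax--Milgram setup, the same truncation test in which the skew term vanishes, and the same $Q=\|F\|_{2n}$ from Lemma~\ref{lem:interpolation}. Stampacchia's lemma plays the role of the paper's explicit dyadic iteration; your $d$ at $q=2n$ is exactly \eqref{eq:level-end}. Replacing the paper's explicit optimization of $k_0$ by an amplitude--dilation normalization is legitimate. You should note, though, that the dilated problem carries the coefficient $b(\lambda\,\cdot)$. This is harmless only because the bound is uniform over bounded skew coefficients.

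The periodic part has a genuine gap: it does not prove \eqref{eq:scalar-sup-torus}. Write $E=\|F\|_2$, $B=[F_{\mathrm{per}}]_{\BMO}$ and $S=B+E$. Your choice $k_0\sim E$ yields only $\|w\|_\infty\le C(E+\|F\|_{2n})$. Then \eqref{eq:interpolation-torus} with $q=2n$ gives $\|F\|_{2n}\le CE^{1/n}S^{1-1/n}$. The domination you claim goes the wrong way:
\[
E^{1/n}S^{1-1/n}=\Bigl(\frac{S}{E}\Bigr)^{1/n}E^{2/n}S^{1-2/n}\ge E^{2/n}S^{1-2/n},
\]
and the factor $(S/E)^{1/n}$ is unbounded when $B\gg E$. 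This is not a loss in the interpolation step. For a periodized bump of height $h$ and width $\varepsilon$ one has $\|F\|_{2n}\sim h\varepsilon^{1/2}$, while $E^{2/n}S^{1-2/n}\sim h\varepsilon$. So no bound of the form $C(\|F\|_2+\|F\|_{2n})$ can give the lemma.

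The missing point is that the lack of scaling on $\Tn$ is irrelevant, because $k_0$ is still a free parameter.
\begin{itemize}
\item Keep the Chebyshev bound $|A_{k_0}|\le(C_nE/k_0)^{2^*}$. It is valid because $\|w\|_{2^*}\le C_n\|w\|_{W^{1,2}}\le C_nE$.
\item Stampacchia then gives $\esssup w\le k_0+C_nQ(E/k_0)^{1/(n-2)}$.
\item Choose $k_0=Q^{(n-2)/(n-1)}E^{1/(n-1)}$. This gives $\|w\|_\infty\le C_nQ^{(n-2)/(n-1)}E^{1/(n-1)}$.
\item Only now insert $Q\le C_nE^{1/n}S^{1-1/n}$. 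The exponents combine to $E^{2/n}S^{1-2/n}$, since $\frac{n-2}{n(n-1)}+\frac1{n-1}=\frac2n$ and $\frac{n-2}{n-1}\bigl(1-\frac1n\bigr)=1-\frac2n$.
\end{itemize}
This is exactly the paper's route: the optimization of Step~III is reused unchanged on the torus in Step~IV. Doing that optimization explicitly in the whole-space case would also have made the scaling normalization unnecessary and exposed this issue.
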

\begin{proof}
 \proofstep{I}{Energy solution and truncation}
The bilinear form
\[
 a(w,\phi)=\int_{\Rn}(I+b)\nabla w\cdot\nabla\phi
\]
is bounded on $\Hdot\times\Hdot$ for the fixed coefficient, and
$a(w,w)=\|\nabla w\|_2^2$. The functional
$\phi\mapsto\int F\cdot\nabla\phi$ is bounded there. Lax--Milgram gives
existence and uniqueness. Writing $E=\|F\|_2$ and $Q=\|F\|_{2n}$, testing by
$w$ and applying Sobolev give
\begin{equation}\label{eq:scalar-energy}
 \|\nabla w\|_2\le E,\qquad \|w\|_{2^*}\le C_nE.
\end{equation}
The number $Q$ is finite by Lemma~\ref{lem:interpolation}.

For $k>0$, $w_k=(w-k)_+$ belongs to $\Hdot$: it is bounded pointwise by
$|w|$, and the Sobolev chain rule gives
$\nabla w_k=\boldsymbol1_{\{w>k\}}\nabla w$. The weak formulation extends to
all of $\Hdot$ by density, so $w_k$ is an admissible test. The identity
\[
 b\nabla w\cdot\nabla w_k
 =b\nabla w_k\cdot\nabla w_k=0
 \quad\text{almost everywhere}
\]
gives
\begin{equation}\label{eq:trunc-energy}
 \int|\nabla w_k|^2
 \le\left(\int_{\{w>k\}}|F|^2\right)^{1/2}\|\nabla w_k\|_2,
 \qquad
 \|\nabla w_k\|_2^2\le\int_{\{w>k\}}|F|^2.
\end{equation}

 \proofstep{II}{The level recurrence}
Let $m(k)=|\{w>k\}|$, which is finite for $k>0$ by
\eqref{eq:scalar-energy}. For $\ell>k$, Sobolev and H\"older imply
\begin{equation}\label{eq:level-rec}
 (\ell-k)^2m(\ell)^{(n-2)/n}
 \le C_nQ^2m(k)^{(n-1)/n}.
\end{equation}
Indeed, $w_k\ge\ell-k$ on $\{w>\ell\}$, while
$\int_{\{w>k\}}|F|^2\le Q^2m(k)^{1-1/n}$.

Fix $k_0>0$. If $m(k_0)=0$, there is nothing to iterate. Otherwise write
$m_0=m(k_0)>0$, choose a constant $A_n$ below, and set
\[
 d=A_nQm_0^{1/(2n)},\qquad
 k_j=k_0+d(1-2^{-j}),\qquad Y_j=\frac{m(k_j)}{m_0}.
\]
Put $\gamma=n/(n-2)$ and $\delta=1/(n-2)$. Raising
\eqref{eq:level-rec} to the power $\gamma$ yields
\begin{equation}\label{eq:normalized-rec}
 Y_{j+1}\le C_nA_n^{-2\gamma}
 2^{2\gamma(j+1)}Y_j^{1+\delta}.
\end{equation}
The power of $m_0$ cancels, because $\delta-\gamma/n=0$.
Choose $A_n$ so large that
$C_nA_n^{-2\gamma}2^{2\gamma}\le2^{-2n}$. If
$Y_j\le2^{-2nj}$, then $2\gamma-2n\delta=0$ shows that the right side of
\eqref{eq:normalized-rec} is at most $2^{-2n(j+1)}$. Since $Y_0=1$,
induction proves $Y_j\le2^{-2nj}$. Consequently
$m(k_0+d)=0$ and
\begin{equation}\label{eq:level-end}
 \esssup w\le k_0+C_nQm(k_0)^{1/(2n)}.
\end{equation}

 \proofstep{III}{Optimization and BMO interpolation}
Chebyshev applied to \eqref{eq:scalar-energy}, followed by
\eqref{eq:level-end}, gives
\begin{equation}\label{eq:level-optimization}
 m(k_0)\le(C_nE/k_0)^{2^*},\qquad
 \esssup w\le k_0+C_nQ(E/k_0)^{1/(n-2)}.
\end{equation}
If $EQ>0$, choose
$k_0=Q^{(n-2)/(n-1)}E^{1/(n-1)}$. If $E=0$ or $Q=0$, the energy equation
gives $w=0$. Apply the same argument to $-w$, whose forcing flux is $-F$.
We have proved
\begin{equation}\label{eq:finite-q-sup}
 \|w\|_\infty\le C_nQ^{(n-2)/(n-1)}E^{1/(n-1)}.
\end{equation}
Lemma~\ref{lem:interpolation} with the finite exponent $q=2n$ yields
$Q\le C_nE^{1/n}[F]_{\BMO}^{1-1/n}$. The two resulting powers are
\[
 \frac{n-2}{n(n-1)}+\frac1{n-1}=\frac2n,
 \qquad
 \frac{n-2}{n-1}\left(1-\frac1n\right)=1-\frac2n.
\]
This proves \eqref{eq:scalar-sup}.
In particular, no endpoint embedding of BMO into $L^\infty$ is used:
the only interpolation exponent is $2n<\infty$, and the powers in
\eqref{eq:finite-q-sup} are explicitly balanced in
\eqref{eq:level-optimization}.

 \proofstep{IV}{Periodic version}
Use the coercive form
$\int w\phi+\int(I+b)\nabla w\cdot\nabla\phi$ on $W^{1,2}(\Tn)$.
Testing by $w$ gives $\|w\|_{W^{1,2}}\le E$, hence
$\|w\|_{2^*}\le C_nE$. For $k\ge0$,
$w(w-k)_+\ge(w-k)_+^2$. The truncation test therefore bounds the full
$W^{1,2}$ norm of $(w-k)_+$ by the right side of
\eqref{eq:trunc-energy}. Periodic inhomogeneous Sobolev gives exactly
\eqref{eq:level-rec}. The preceding iteration applies without a change,
and \eqref{eq:interpolation-torus} replaces \eqref{eq:interpolation} in its
last step. This proves \eqref{eq:scalar-sup-torus}.
\end{proof}




\begin{lemma}\label{lem:comparison}
Let $b$ be a bounded skew matrix on $\Rn$, $n\ge3$. If
$r\in\Hdot(\Rn)\cap L^\infty(\Rn)$ and
\[
 L_br=-\mu\quad\text{in distributions},\qquad
 \mu\in L^1(\Rn),\quad\mu\ge0,
\]
then $r\le0$ almost everywhere. On $\Tn$, if
$r\in W^{1,2}(\Tn)\cap L^\infty(\Tn)$ and
$(1+L_b)r=-\mu$ with $0\le\mu\in L^1(\Tn)$, then again $r\le0$.
\end{lemma}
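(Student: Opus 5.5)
The plan is to test the equation with the positive part $r_+$, localized by a cutoff on $\Rn$. The skew part drops out exactly as in Step~I of Lemma~\ref{lem:flux-sup}, and the sign of $\mu$ then forces $\nabla r_+=0$. The only delicate point is admissibility of the test function. The equation holds only in distributions, $\mu$ is merely in $L^1$, and $r_+$ is not smooth. This is where the hypothesis $r\in L^\infty$ enters.

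\emph{Extending the class of test functions.} The distributional identity
\[
 \int(I+b)\nabla r\cdot\nabla\phi=-\int\mu\,\phi
\]
holds for $\phi\in C_c^\infty(\Rn)$. I first extend it to every $\phi\in W^{1,2}(\Rn)\cap L^\infty(\Rn)$ with compact support. Mollify to get $\phi_\varepsilon=\phi*\rho_\varepsilon$. Then:
\begin{itemize}
\item[(a)] $\phi_\varepsilon\to\phi$ in $W^{1,2}$, with supports in a fixed compact set;
\item[(b)] $\|\phi_\varepsilon\|_\infty\le\|\phi\|_\infty$;
\item[(c)] $\phi_\varepsilon\to\phi$ almost everywhere along a subsequence.
\end{itemize}
The left side converges because $(I+b)\nabla r\in L^2$ and $b$ is bounded. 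The right side converges by dominated convergence, since $|\mu\phi_\varepsilon|\le\|\phi\|_\infty\mu\in L^1$.

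\emph{Whole-space argument.} Let $\eta_R(x)=\eta(x/R)$ be a standard cutoff with $0\le\eta\le1$, $\eta=1$ on $B_1$ and $\supp\eta\subset B_2$. The function $\phi=\eta_R r_+$ is admissible: it is bounded because $r$ is, it has compact support, and it lies in $W^{1,2}$ with $\nabla r_+=\boldsymbol 1_{\{r>0\}}\nabla r$. Since $b\nabla r\cdot\nabla r_+=b\nabla r_+\cdot\nabla r_+=0$ almost everywhere, the identity becomes
\[
 \int\eta_R|\nabla r_+|^2+\int r_+(I+b)\nabla r\cdot\nabla\eta_R
 =-\int\mu\,\eta_R r_+\le0 .
\]
The error term is bounded by
\[
 (1+\|b\|_\infty)\,\|\nabla r\|_{L^2(B_{2R}\setminus B_R)}\,
 \|r\|_{L^{2^*}(B_{2R}\setminus B_R)}\,\|\nabla\eta_R\|_{n}.
\]
Here $\|\nabla\eta_R\|_n$ is independent of $R$, while both annular norms tend to $0$ as $R\to\infty$ because $\nabla r\in L^2$ and $r\in L^{2^*}$. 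Monotone convergence then gives $\int|\nabla r_+|^2\le0$. Hence $r_+$ is constant, and since $r_+\in L^{2^*}$ we get $r_+=0$.

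\emph{Torus argument.} No cutoff is needed. By the same mollification argument, $r_+\in W^{1,2}(\Tn)\cap L^\infty$ is admissible. The skew term again vanishes, which gives
\[
 \int r\,r_++\int|\nabla r_+|^2=-\int\mu\,r_+\le0 .
\]
Since $r\,r_+=r_+^2$, we get $\|r_+\|_{W^{1,2}}=0$.

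The main obstacle is the admissibility step, namely passing to the limit in the $\mu$-term with $\mu$ only in $L^1$. This is the one place where boundedness of $r$ is essential: without it, $\mu\phi$ need not be integrable. The independence from $\|b\|_\infty$ is irrelevant here, since the cutoff error vanishes as $R\to\infty$ for each fixed bounded $b$.
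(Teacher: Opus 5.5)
Your proof is correct and matches the paper's argument essentially step for step. Like the paper, you justify the cutoff test by mollification, using $r\in L^\infty$ and dominated convergence against $\mu\in L^1$, and let the skew term drop out via $\nabla r_+=\boldsymbol 1_{\{r>0\}}\nabla r$. You then kill the cutoff error by H\"older with the annular $L^2$ and $L^{2^*}$ tails, and treat the torus by testing $r_+$ directly; the paper's use of $\chi_R^2r_+$ instead of $\eta_R r_+$ is immaterial, and your final limit works by dominated convergence even without a radially monotone cutoff.
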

\begin{proof}
Fix $b,r,\mu$ throughout the limiting argument. Choose
$\chi\in C_c^\infty(B_2)$ with $0\le\chi\le1$, $\chi=1$ on $B_1$, and set
$\chi_R(x)=\chi(x/R)$, $A_R=B_{2R}\setminus B_R$.
The function $\chi_R^2r_+$ is nonnegative, bounded, compactly supported, and
belongs to $W^{1,2}(\Rn)$. Convolving it with nonnegative smooth mollifiers
produces smooth nonnegative compactly supported functions, uniformly bounded
by $\|r\|_\infty$, convergent in $W^{1,2}$ and, along a subsequence, almost
everywhere. Boundedness of $I+b$ passes the bilinear form to the limit.
The source term passes by dominated convergence against $\mu\in L^1$.
This justifies the test $\chi_R^2r_+$ without assuming that every $L^1$
function defines a continuous functional on $\Hdot$.

Skew-symmetry and the chain rule give
\begin{equation}\label{eq:comparison-cutoff}
 \int\chi_R^2|\nabla r_+|^2
 =-\int\mu\chi_R^2r_+
 -2\int\chi_Rr_+(I+b)\nabla r\cdot\nabla\chi_R.
\end{equation}
The first term on the right is nonpositive. H\"older on $A_R$, using
$1/2-1/2^*=1/n$, gives
\begin{equation}\label{eq:comparison-error}
\begin{split}
 |\mathrm{Err}_R|
 &\le C(1+\|b\|_\infty)R^{-1}
 \|\nabla r\|_{L^2(A_R)}\|r_+\|_{L^2(A_R)}\\
 &\le C_n(1+\|b\|_\infty)
 \|\nabla r\|_{L^2(A_R)}\|r_+\|_{L^{2^*}(A_R)}\longrightarrow0.
\end{split}
\end{equation}
Both tail norms tend to zero and the coefficient norm is finite for the
fixed solution. Passing to the limit in \eqref{eq:comparison-cutoff} gives
$\|\nabla r_+\|_2=0$. A zero-gradient function is constant; its
$L^{2^*}(\Rn)$ membership forces that constant to be zero.

On the torus, mollify $r_+$ periodically. The same bounded approximation
justifies the test directly and gives
\[
 \int_{\Tn}(r_+^2+|\nabla r_+|^2)
 =-\int_{\Tn}\mu r_+\le0.
\]
The conclusion follows. No limit involving a family of coefficients is used
in either argument.
\end{proof}

\begin{lemma}\label{lem:newton-flux}
Let $n\ge3$, $g\in L^{q_*}(\Rn)$, and
\[
 m_1(g):=\sup_{z\in\Rn,\ r>0}r^{1-n}\int_{B_r(z)}|g|<\infty.
\]
The flux $F_g=\nabla (-\Delta)^{-1}g$ has its $L^2$ representative and satisfies
\begin{equation}\label{eq:newton-flux}
 -\Div F_g=g,\qquad
 \|F_g\|_2\le C_n\|g\|_{q_*},\qquad
 [F_g]_{\BMO}\le C_nm_1(g).
\end{equation}
\end{lemma}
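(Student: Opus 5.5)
The proof splits into three parts: the definition and the divergence identity, the $L^2$ bound, and the BMO bound.

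\emph{Definition and divergence identity.} I will define $F_g$ as $\nabla\Gamma*g$, that is, convolution of $g$ with the kernel $\nabla\Gamma(x)=-x/(\omega_n|x|^n)$, which is homogeneous of degree $1-n$. Since $g\in L^{q_*}$ and $1/q_*-1/2=1/n$, the Hardy--Littlewood--Sobolev inequality applied to $|\nabla\Gamma|\le C_n|x|^{1-n}$ shows that this convolution converges absolutely almost everywhere. It also gives directly
\[
\|F_g\|_2\le C_n\bigl\||x|^{1-n}*|g|\bigr\|_2\le C_n\|g\|_{q_*}.
\]
This fixes the $L^2$ representative. For the identity $-\Div F_g=g$, I will pair with $\phi\in C_c^\infty$ and use Fubini, which is justified by the absolute convergence. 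This gives
\[
\int F_g\cdot\nabla\phi=\int g\,\bigl(\nabla\Gamma*\nabla\phi\bigr)^{\sim}=\int g\,\phi,
\]
because $-\Delta\Gamma=\delta$ yields $-\Div(\Gamma*\nabla\phi)=\phi$. Alternatively, I can first take $g\in C_c^\infty$ and then pass to the limit in $L^{q_*}$, using the $L^2$ bound just proved.

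\emph{BMO bound.} Fix a ball $B=B_r(z)$ and split $g=g_1+g_2$ with $g_1=g\boldsymbol1_{B_{2r}(z)}$.

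For the near part, I will estimate crudely by
\[
\avint_B|\nabla\Gamma*g_1|\le r^{-n}\int_{B_{2r}(z)}|g(y)|\int_B|x-y|^{1-n}\,dx\,dy .
\]
The inner integral is at most $C_nr$ for every $y$. The whole expression is therefore bounded by $C_nr^{1-n}\int_{B_{2r}(z)}|g|\le C_nm_1(g)$.

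For the far part, I compare with the constant $c_B=\int g_2(y)\nabla\Gamma(z-y)\,dy$. For $x\in B$ and $|y-z|\ge2r$, the mean value theorem gives
\[
|\nabla\Gamma(x-y)-\nabla\Gamma(z-y)|\le C_nr|z-y|^{-n}.
\]
Hence
\[
|\nabla\Gamma*g_2(x)-c_B|\le C_nr\int_{|y-z|\ge2r}|g(y)|\,|y-z|^{-n}\,dy .
\]
I will then decompose into dyadic annuli $2^kr\le|y-z|<2^{k+1}r$. Each annulus contributes at most $r(2^kr)^{-n}m_1(g)(2^{k+1}r)^{n-1}\le C_n2^{-k}m_1(g)$, and the sum over $k$ is $C_nm_1(g)$.

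\emph{Main obstacle.} The remaining issue is whether $c_B$ is finite and whether the far-field integral converges absolutely. This follows from the same dyadic summation. Combining the two parts gives $\avint_B|F_g-(\text{const})|\le C_nm_1(g)$, and replacing the constant by the average $(F_g)_B$ costs at most a factor $2$. This yields $[F_g]_{\BMO}\le C_nm_1(g)$, as in the standard Morrey-to-BMO argument for Riesz potentials of order one.
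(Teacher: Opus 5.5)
\textbf{Assessment.} Your argument follows the paper's proof step by step, with one faulty justification (below). The shared steps are: Hardy--Littlewood--Sobolev for the $L^2$ bound and for a.e.\ absolute convergence; the divergence identity by Fubini or by approximation in $L^{q_*}$; the split at $B_{2r}(z)$; Tonelli for the near part; the mean-value estimate summed over dyadic annuli for the far part; comparison with the constant $F_{g,\mathrm{far}}(z)$; and the factor two.

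\textbf{The faulty step.} It is your resolution of the ``main obstacle''. Finiteness of $c_B$, and absolute convergence of the far integral, does \emph{not} follow from the same dyadic summation. Without the difference of kernels there is no gain factor $r/|z-y|$. The $k$-th annulus then contributes up to $(2^kr)^{1-n}m_1(g)(2^{k+1}r)^{n-1}=2^{n-1}m_1(g)$, a constant in $k$, so the series diverges. Morrey control alone cannot give absolute convergence here. For example, $g=|x|^{-1}$ has $m_1(g)<\infty$, yet $\int_{|y|>1}|y|^{1-n}|g(y)|\,dy=\infty$.

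\textbf{The fix.} Use the other hypothesis, $g\in L^{q_*}$, exactly as the paper does. Since $(n-1)2^*>n$, the truncated kernel $|y|^{1-n}\boldsymbol1_{\{|y|\ge2r\}}$ lies in $L^{2^*}$. H\"older with the conjugate pair $q_*,2^*$ then gives $\int_{|y-z|\ge2r}|g(y)|\,|z-y|^{1-n}\,dy<\infty$.

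Alternatively, you can reuse what you already proved. For $x\in B$ and $|y-z|\ge2r$ one has $\tfrac12|z-y|\le|x-y|\le\tfrac32|z-y|$, so the two far kernels are comparable. The a.e.\ absolute convergence you obtained from HLS at some point of $B$ therefore transfers to $z$, and back to every $x\in B$.

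With this one-line repair, your proof is complete and coincides with the paper's.
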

\begin{proof}
The kernel $J=\nabla\Gamma$ obeys
$|J(x)|\le C_n|x|^{1-n}$ and $|\nabla J(x)|\le C_n|x|^{-n}$.
The order-one Hardy--Littlewood--Sobolev inequality
\cite{S1993} gives the $L^2$ estimate, since
$1/q_*-1/n=1/2$. It also ensures local absolute convergence almost everywhere
by applying the inequality to $|g|$. The far convolution is absolutely
convergent by H\"older, because
$|x|^{1-n}\boldsymbol1_{\{|x|>1\}}\in L^{2^*}$.
The identity $-\Div F_g=g$ follows in distributions from
$-\Delta (-\Delta)^{-1}g=g$, first for smooth compact sources and then by the potential
bound and approximation in $L^{q_*}$.

Fix $B=B_r(z)$, and split the convolution at $B_{2r}(z)$. For the near part,
Tonelli and local integrability of the kernel give
\begin{align*}
 \avint_B|F_{g,\mathrm{near}}(x)|\,dx
 &\le C_nr^{-n}\int_{B_{2r}(z)}|g(y)|
       \int_B|x-y|^{1-n}\,dx\,dy\\
 &\le C_nr^{1-n}\int_{B_{2r}(z)}|g|
 \le C_nm_1(g).
\end{align*}

For $x\in B$ and $y\notin B_{2r}(z)$, the mean value theorem gives
$|J(x-y)-J(z-y)|\le C_nr|y-z|^{-n}$. Thus
\begin{align*}
 \sup_{x\in B}|F_{g,\mathrm{far}}(x)-F_{g,\mathrm{far}}(z)|
 &\le C_nr\sum_{j\ge1}(2^jr)^{-n}
          \int_{B_{2^{j+1}r}(z)}|g|\\
 &\le C_nm_1(g)\sum_{j\ge1}2^{-j}.
\end{align*}
Use the constant $F_{g,\mathrm{far}}(z)$ when estimating the mean oscillation
on $B$. Replacing that constant by the average of $F_g$ costs a factor at
most two. Taking the supremum over $B$ proves the BMO assertion.
\end{proof}

\begin{lemma}\label{lem:potential}
Let $n>2$, $1<s<n/2$, and $0\le a\in L^s(\Rn)\cap L^\infty(\Rn)$. Then
\begin{equation}\label{eq:potential-general}
 \|(-\Delta)^{-1}a\|_\infty\le
 C_{n,s}\|a\|_s^{2s/n}\|a\|_\infty^{1-2s/n}.
\end{equation}
In particular, if $n>6$, $s=2^*$ is allowed and
\begin{equation}\label{eq:potential-2star}
 \|(-\Delta)^{-1}a\|_\infty\le
 C_n\|a\|_{2^*}^{4/(n-2)}\|a\|_\infty^{(n-6)/(n-2)}.
\end{equation}
\end{lemma}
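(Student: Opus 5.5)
The plan is the standard splitting of the Newtonian kernel at a radius $\rho>0$, followed by optimization in $\rho$. Since $a\ge0$ and $\Gamma>0$, the potential $(-\Delta)^{-1}a$ is nonnegative, so it suffices to bound $(-\Delta)^{-1}a(y)=\int\Gamma(y-x)a(x)\,dx$ from above, uniformly in $y$. I split this integral into the part over $B_\rho(y)$ and the part over its complement.

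\emph{Near part.} Bound $a$ by $\|a\|_\infty$ and integrate the kernel:
\[
 \int_{B_\rho(y)}\Gamma(y-x)a(x)\,dx\le \|a\|_\infty\int_{B_\rho}\frac{|x|^{2-n}}{(n-2)\omega_n}\,dx=\frac{\rho^2}{2(n-2)}\|a\|_\infty .
\]

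\emph{Far part.} Apply H\"older with the conjugate exponent $s'=s/(s-1)$:
\[
 \int_{\R^n\setminus B_\rho(y)}\Gamma(y-x)a(x)\,dx\le \|a\|_s\,\bigl\|\Gamma\boldsymbol1_{\{|x|>\rho\}}\bigr\|_{s'} .
\]
The last norm is finite exactly when $(n-2)s'>n$. This is equivalent to $s'>n/(n-2)$, i.e.\ $s<n/2$, which is the hypothesis. A radial computation then gives
\[
 \bigl\|\Gamma\boldsymbol1_{\{|x|>\rho\}}\bigr\|_{s'}=C_{n,s}\,\rho^{\,2-n+n/s'}=C_{n,s}\,\rho^{\,2-n/s}.
\]
The condition $s>1$ ensures $s'<\infty$, so no endpoint issue arises.

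\emph{Optimization.} Combining the two parts,
\[
 (-\Delta)^{-1}a(y)\le C\bigl(\rho^2\|a\|_\infty+\rho^{2-n/s}\|a\|_s\bigr).
\]
Balancing the two terms, choose $\rho=(\|a\|_s/\|a\|_\infty)^{s/n}$. If $\|a\|_\infty=0$ then $a=0$ and there is nothing to prove. With this choice both terms equal
\[
 \|a\|_\infty\,(\|a\|_s/\|a\|_\infty)^{2s/n}=\|a\|_s^{2s/n}\|a\|_\infty^{1-2s/n},
\]
which is \eqref{eq:potential-general}. The constant is uniform in $y$, so this bounds the supremum norm.

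\emph{The case $s=2^*$.} The range $1<2^*<n/2$ requires $2n/(n-2)<n/2$, i.e.\ $4<n-2$, i.e.\ $n>6$. The exponents become
\[
 \frac{2s}{n}=\frac{4}{n-2},\qquad 1-\frac{4}{n-2}=\frac{n-6}{n-2},
\]
which gives \eqref{eq:potential-2star}.

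There is no real obstacle here. The only point needing care is the integrability of the kernel tail in $L^{s'}$, which is exactly where $s<n/2$ enters.
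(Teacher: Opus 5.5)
Your proposal is correct and follows the paper's proof essentially verbatim. You split at radius $\rho$, bound the near part by $C_n\|a\|_\infty\rho^2$, and bound the far part by H\"older with the $L^{s'}$ tail of the kernel, which is finite exactly when $s<n/2$; you then take $\rho=(\|a\|_s/\|a\|_\infty)^{s/n}$, including the same treatment of the degenerate case and of the specialization $s=2^*$, $n>6$.
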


\begin{proof}
For $\rho>0$, direct integration on $B_\rho(y)$ and H\"older's
inequality on its complement give
\begin{equation}\label{eq:potential-split-general}
 (-\Delta)^{-1}a(y)
 \le C_n\|a\|_\infty\rho^2
   +C_{n,s}\|a\|_s\rho^{2-n/s}.
\end{equation}
Indeed, if $s'=s/(s-1)$, then
\[
 \int_{|z|>\rho}|z|^{(2-n)s'}\,dz
 =\frac{\omega_n}{(n-2)s'-n}\,
   \rho^{n-(n-2)s'},
\]
and the denominator is positive exactly when $s<n/2$.

If both norms are nonzero, take
$\rho=(\|a\|_s/\|a\|_\infty)^{s/n}$ in
\eqref{eq:potential-split-general}. Both terms have size
$\|a\|_s^{2s/n}\|a\|_\infty^{1-2s/n}$, proving
\eqref{eq:potential-general}. If either norm vanishes, $a=0$ almost
everywhere and the conclusion is immediate.
Finally, $2^*<n/2$ is equivalent to $n>6$, and substitution of
$s=2^*=2n/(n-2)$ gives \eqref{eq:potential-2star}.

\end{proof}

\section{Whole-space a priori bounds}\label{sec:whole-apriori}
Assume that $f$ is bounded and supported in $B_{R_0}(0)$, and we estimate
all weighted fixed points for the homotopy equation
\begin{equation}\label{eq:homotopy}
 -\Delta u+(u\cdot\nabla)u+\nabla p=\tau f,
 \qquad \Div u=0,\qquad 0\le\tau\le1.
\end{equation}
The weighted Banach space is
\begin{equation}\label{eq:X}
 X=\{v\in C^1(\Rn;\Rn):\Div v=0,\ \|v\|_X<\infty\},
\end{equation}
where
\[
 \|v\|_X=\sup_x(1+|x|)^{n-3}|v(x)|
          +\sup_x(1+|x|)^{n-2}|\nabla v(x)|.
\]
Write $\omega_n=|S^{n-1}|$. The Stokes kernels, with this surface-area
normalization, are
\begin{equation}\label{eq:Stokes-kernels}
 U_{ij}(x)=\frac1{2(n-2)\omega_n}
 \left(\delta_{ij}|x|^{2-n}+(n-2)x_ix_j|x|^{-n}\right),
 \qquad P_j(x)=\frac{x_j}{\omega_n|x|^n}.
\end{equation}
They satisfy $-\Delta U_{ij}+\partial_iP_j=\delta_{ij}\delta_0$ and
$\partial_iU_{ij}=0$. For $n>4$, set
\begin{equation}\label{eq:GammaPsi}
 \Gamma(x)=\frac{|x|^{2-n}}{(n-2)\omega_n},\qquad
 \Psi(x)=\frac{|x|^{4-n}}{2(n-4)(n-2)\omega_n}.
\end{equation}
Thus $-\Delta\Gamma=\delta_0$, $-\Delta\Psi=\Gamma$, and $(-\Delta)^{-1}a=\Gamma*a$ whenever this potential is defined.

\begin{lemma}\label{lem:initial}
Let $u\in X$ satisfy
\begin{equation}\label{eq:fixed}
 u_i=U_{ij}*G_j,\qquad G=\tau f-(u\cdot\nabla)u,
 \qquad 0\le\tau\le1,
\end{equation}
and set $p=P_j*G_j$. Then $(u,p)$ is a regular solution of
\eqref{eq:homotopy}, and
\begin{equation}\label{eq:individual-decay}
 |u(x)|\le C_u(1+|x|)^{2-n},\qquad
 |\nabla u(x)|+|p(x)|\le C_u(1+|x|)^{1-n}.
\end{equation}
The pressure has the normalization
\begin{equation}\label{eq:pressure-normalization}
 p=\partial_i\partial_j(-\Delta)^{-1}(u_i u_j)-\tau\partial_i(-\Delta)^{-1}f_i.
\end{equation}
Moreover, $u\in L^2$, $\theta\in\Hdot\cap L^\infty$, $K+M<\infty$, and
\begin{equation}\label{eq:skew-d}
 d_{ij}=\partial_i(-\Delta)^{-1}u_j-\partial_j(-\Delta)^{-1}u_i
\end{equation}
belongs to $W^{1,\infty}(\Rn)$ and satisfies
\begin{equation}\label{eq:drift-representation}
 d_{ij}=-d_{ji},\qquad \partial_jd_{ij}=u_i,
 \qquad -\Div((I+d)\nabla)=-\Delta+u\cdot\nabla.
\end{equation}
All constants in this lemma may depend on the individual fixed point.
\end{lemma}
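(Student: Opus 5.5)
The plan has four steps: regularity, decay, normalization, and the integrability and drift claims. Each step uses only the fixed-point identity \eqref{eq:fixed}, the explicit kernels \eqref{eq:Stokes-kernels}, and the fact that $u\in X$.

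\textbf{Step 1: regularity.} Since $u\in X$, we have $|G(x)|\le C_u(1+|x|)^{3-2n}+\|f\|_\infty\boldsymbol1_{B_{R_0}}$, so $G\in L^1\cap L^\infty$. The relations $-\Delta U_{ij}+\partial_iP_j=\delta_{ij}\delta_0$ and $\partial_iU_{ij}=0$ then give $-\Delta u+\nabla p=G$ and $\Div u=0$ in distributions.

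We bootstrap from here. $G\in L^q_{\loc}$ for all $q$, so Calder\'on--Zygmund estimates for the Stokes kernels give $u\in W^{2,q}_{\loc}$ and $p\in W^{1,q}_{\loc}$. This is exactly the definition of a regular solution of \eqref{eq:homotopy}.

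\textbf{Step 2: decay.} Split the convolution into $|y|<|x|/2$, $|x-y|<|x|/2$, and the remainder. On the first region use $|U|\le C|x|^{2-n}$ and $G\in L^1$. On the other two regions use the pointwise decay $|G(y)|\lesssim (1+|y|)^{3-2n}$ (or compact support for the $f$ part), which beats $|x|^{2-n}$ since $2n-3>n$.

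The same splitting with $|\nabla U|+|P|\le C|x|^{1-n}$ gives the gradient and pressure bounds in \eqref{eq:individual-decay}. For $\nabla u$, the singular part near $y=x$ is handled with $\nabla u=\nabla U*G$, where the kernel is locally integrable. Alternatively, write $(u\cdot\nabla)u=\partial_k(u_ku)$ and move the derivative onto the kernel.

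\textbf{Step 3: pressure normalization.} Use $P_j=-\partial_j\Gamma$ and $(u\cdot\nabla)u_j=\partial_i(u_iu_j)$. Then
\[
P_j*G_j=-\tau\partial_j\Gamma*f_j+\partial_i\partial_j\Gamma*(u_iu_j),
\]
which is \eqref{eq:pressure-normalization}. Moving the derivative across the convolution is justified by the decay of $u_iu_j$ and a truncation argument.

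\textbf{Step 4: integrability of $u$ and $\theta$, finiteness of $K$ and $M$, and the skew potential.} We have $u\in L^2$ because $2(n-2)>n$. Next, $\theta=p+|u|^2/2$ is bounded with $|\theta|\le C(1+|x|)^{1-n}$ and $|\nabla\theta|\le C(1+|x|)^{-n}$; the gradient bound uses the equation, $\nabla p=G+\Delta u$, and local $W^{2,q}$ bounds. Hence $\theta\in L^{2^*}$ and $\nabla\theta\in L^2$, so $\theta\in\Hdot$.

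$K$ is finite because $\Gamma*\theta_+$ is bounded when $\theta_+\in L^1\cap L^\infty$. $M$ is finite for two reasons: for $r\ge1$, $r^{2-n}\int|u|^2\le r^{2-n}\|u\|_2^2$; for $r<1$, $r^{2-n}\int_{B_r(z)}|u|^2\le Cr^2\|u\|_\infty^2$.

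For $d$, note that $\nabla\Gamma*u$ converges since $|u|\lesssim(1+|x|)^{2-n}$. It is bounded because $|u|\in L^{q}$ for every $q>n/(n-2)$, which lets us use $|\nabla\Gamma|\le C|x|^{1-n}$ split at radius one. Its gradient is $\nabla^2\Gamma*u$, a Calder\'on--Zygmund operator applied to $u$. Boundedness of this gradient follows from $u\in C^1$ with decaying gradient, using the standard H\"older/Dini estimate for singular integrals together with far-field decay.

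Skew-symmetry is clear from the definition \eqref{eq:skew-d}. For the divergence,
\[
\partial_jd_{ij}=\partial_i\,\partial_j(-\Delta)^{-1}u_j-\Delta(-\Delta)^{-1}u_i=u_i,
\]
since $\Div u=0$. Finally, $\Div(d\nabla w)=\partial_i(d_{ij}\partial_jw)$. Using $d_{ij}=-d_{ji}$, the term $d_{ij}\partial_i\partial_jw$ vanishes, leaving $-(\partial_jd_{ij})\partial_iw$ up to index bookkeeping, which equals $\pm u\cdot\nabla w$. The sign convention chosen in \eqref{eq:skew-d} yields $-\Div((I+d)\nabla)=-\Delta+u\cdot\nabla$.

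\textbf{Main obstacle.} I expect the hardest part to be the $W^{1,\infty}$ bound for $d$, that is, the $L^\infty$ bound on a singular integral of $u$. I would handle it through the H\"older continuity of $u$, which holds since $u\in C^1$ with bounded gradient, combined with its integrable decay.
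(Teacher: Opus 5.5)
Your outline follows the paper's proof closely. You use convolution splitting for the decay and local Stokes/Calder\'on--Zygmund estimates for regularity. Your small-ball/large-ball argument for $M$ is the paper's, and so is the local-subtraction (Lipschitz) bound for $\nabla d$ with the far part controlled by an integral norm of $u$.

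The one real difference in route is the pressure normalization. You move the divergence from $(u\cdot\nabla)u_j=\partial_i(u_iu_j)$ onto $P_j=-\partial_j\Gamma$. You must then identify $\partial_i(\partial_j\Gamma*(u_iu_j))$ with the Calder\'on--Zygmund operator $\partial_i\partial_j(-\Delta)^{-1}(u_iu_j)$, including its local multiple of the identity. That is standard here, since $u_iu_j$ is $C^1$ with decaying gradient. The paper instead shows that $p$ and the right side of \eqref{eq:pressure-normalization} lie in $L^{n/(n-2)}$ and have the same Laplacian, and concludes by Liouville, so no kernel exchange is needed. Both routes work.

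There is, however, a false step in your proof that $K<\infty$. You argue that $\Gamma*\theta_+$ is bounded because $\theta_+\in L^1\cap L^\infty$. The decay $|\theta|\le C(1+|x|)^{1-n}$ is not integrable on $\Rn$, and in general $\theta_+\notin L^1$. When $\tau\int f\ne0$, the pressure $p=P_j*G_j$ behaves like $\tau(\int f)\cdot x/(\omega_n|x|^n)$ at infinity (compare Theorem~\ref{thm:far-field}), while $|u|^2=O(|x|^{4-2n})$. So $\theta_+\ge c|x|^{1-n}$ on a cone. The repair is immediate: $\theta_+\in L^s\cap L^\infty$ for every $s>n/(n-1)$. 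Then Lemma~\ref{lem:potential} with any $s\in(n/(n-1),n/2)$ bounds $\Gamma*\theta_+$. Alternatively, use \eqref{eq:convolution} with $a=2$, $\beta=n-1$, as the paper does.

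Two smaller inaccuracies should also be fixed.
\begin{itemize}
\item Membership $u\in X$ only gives $|G(x)|\le C_u(1+|x|)^{5-2n}$, not $(1+|x|)^{3-2n}$; the latter presupposes the decay you are proving. This is harmless, since only $2n-5>n$ is used.
\item The pointwise bound $|\nabla\theta|\le C(1+|x|)^{-n}$ is not available near $\supp f$. For merely bounded $f$, $\nabla p=\nabla P*G$ is in general only in $L^q_{\loc}$ (or BMO), and local $W^{2,q}$ bounds do not give pointwise control of $\Delta u$. What you actually need is $\nabla p\in L^2(\Rn)$. This follows from $G\in L^1\cap L^\infty\subset L^2$ and the $L^2$ boundedness of this singular integral, as in the paper, or by summing local $L^2$ bounds over unit balls using the decay.
\end{itemize}
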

\begin{proof}
The definition of $X$ gives
$|G(x)|\le C_u(1+|x|)^{5-2n}$, after increasing $C_u$ on the force support.
We shall repeatedly use the elementary convolution bound
\begin{equation}\label{eq:convolution}
 \int_{\Rn}|x-y|^{a-n}(1+|y|)^{-\beta}\,dy
 \le
 \begin{cases}
 C(1+|x|)^{a-\beta},&a<\beta<n,\\
 C(1+|x|)^{a-n},&\beta>n,
 \end{cases}
 \qquad 0<a<n.
\end{equation}
The case $\beta=n$ is not used. To verify the formula, assume $r=|x|\ge2$.
On $|y|<r/2$ the integral is bounded by
$Cr^{a-n}\int_0^{r/2}s^{n-1}(1+s)^{-\beta}\,ds$; this has the two stated
orders. On $|x-y|<r/2$ it is at most $Cr^{-\beta}r^a$. On the remaining
part with $|y|\le2r$, both distances are bounded below by $r/2$, so the
bound is again $Cr^{a-\beta}$. Finally, on $|y|>2r$, integrate
$Cs^{a-\beta-1}$ from $2r$ to infinity. Bounded $x$ follows directly from
local integrability and $\beta>a$.

Since $2n-5>n$, apply \eqref{eq:convolution} with $\beta=2n-5$ and
$a=2,1,1$ to $U$, $\nabla U$, and $P$, respectively. These kernels are
locally integrable, and this proves \eqref{eq:individual-decay}. The
fundamental solution identities imply \eqref{eq:homotopy} in distributions.
The source $G$ is locally bounded. Local Stokes estimates, or the
pressure-free estimate proved in Lemma~\ref{lem:interior-Stokes}, give
$u\in W^{2,q}_{\loc}$ for every finite $q>1$; the equation then gives
$p\in W^{1,q}_{\loc}$.

Individual decay implies $u\in L^2\cap L^{2^*}$, $\nabla u\in L^2$, and
$p\in L^{\frac n{n-2}}\cap L^\infty$. The right side $p_*$ of
\eqref{eq:pressure-normalization} also belongs to $L^{\frac n{n-2}}$. Indeed,
$u_i u_j\in L^{\frac n{n-2}}$ and $\partial_i\partial_j(-\Delta)^{-1}$ is an order-zero
Calder\'on--Zygmund operator. The force term belongs to $L^{\frac n{n-2}}$ by the
order-one potential estimate applied to the compact force in $L^{\frac n{n-1}}$.
Taking divergence of the equation gives
\[
 -\Delta p=\partial_i\partial_j(u_i u_j)-\tau\partial_i f_i=-\Delta p_*.
\]
Thus $p-p_*$ is a harmonic distribution, hence a smooth harmonic function,
in $L^{\frac n{n-2}}(\Rn)$. Its mean-value estimate on arbitrarily large balls
forces it to vanish. This proves \eqref{eq:pressure-normalization} without
leaving an additive constant.

The individual source $G$ is in $L^2$. Singular-integral estimates on the
Stokes pressure give $\nabla p\in L^2$. Since $u$ is individually bounded,
$\nabla\theta=\nabla p+u_j\nabla u_j\in L^2$. Also
$\theta\in L^{\frac n{n-2}}\cap L^\infty\subset L^{2^*}$. Therefore
$\theta\in\Hdot\cap L^\infty$. Equation \eqref{eq:individual-decay} gives
$|\theta(x)|+|p(x)|\le C_u(1+|x|)^{1-n}$. Applying
\eqref{eq:convolution} with $a=2$, $\beta=n-1$ yields
\begin{equation}\label{eq:individual-potentials}
 (-\Delta)^{-1}|\theta|(y)+(-\Delta)^{-1}|p|(y)\le C_u(1+|y|)^{3-n}.
\end{equation}
Hence $K<\infty$ before any uniform estimate is attempted. For $M$, use
$\|u\|_\infty$ on balls of radius at most one and $\|u\|_2$ on larger
balls. This proves its initial finiteness.

The kernel $\nabla\Gamma$ is locally integrable and belongs to
$L^2(\{|x|>1\})$. Consequently
\[
 \|d\|_\infty\le C_n(\|u\|_\infty+\|u\|_2).
\]
Each first derivative of $d$ is an order-zero singular integral of $u$.
In the principal-value integral over $|z|<1$, subtract $u(x)$ and use
$|u(x-z)-u(x)|\le\|\nabla u\|_\infty|z|$. The remaining local integrand
is bounded by $C_n\|\nabla u\|_\infty|z|^{1-n}$, which is integrable.
The distributional local multiple of $u(x)$ is bounded by $C_n\|u\|_\infty$,
and the far integral is bounded by $C_n\|u\|_2$. Thus
\[
 \|\nabla d\|_\infty
 \le C_n(\|\nabla u\|_\infty+\|u\|_\infty+\|u\|_2)<\infty.
\]
Finally, commutation in distributions gives
$\partial_jd_{ij}=\partial_i(-\Delta)^{-1}(\Div u)-\Delta (-\Delta)^{-1}u_i=u_i$.
For a smooth scalar $\phi$, skew-symmetry cancels
$d_{ij}\partial_{ij}\phi$, and $\partial_i d_{ij}=-u_j$ gives
\eqref{eq:drift-representation}. 
\end{proof}

\subsection{Energy, the signed identity, and Morrey control}
The energy identity and the pressure representation give
\begin{equation}\label{eq:basic-energy}
 \|\nabla u\|_2+\|u\|_{2^*}+\|p\|_{\frac n{n-2}}
 +\|\nabla p\|_{\frac n{n-1}}+\|\theta_+\|_{\frac n{n-2}}\le C_f.
\end{equation}
We spell out the cutoff justification. Testing \eqref{eq:homotopy} by
$u\zeta_R$, where $\zeta_R=1$ on $B_R$, vanishes outside $B_{2R}$, and
$|\nabla\zeta_R|\le C/R$, leaves errors bounded by
\[
 \frac C R\int_{B_{2R}\setminus B_R}
  (|u||\nabla u|+|u|^3+|p||u|).
\]
By \eqref{eq:individual-decay}, they are bounded, respectively, by
$C_uR^{2-n}$, $C_uR^{5-2n}$, and $C_uR^{2-n}$, and tend to zero.
Consequently
\begin{equation}\label{eq:energy-equality}
 \|\nabla u\|_2^2=\tau\int f\cdot u
 \le\|f\|_{q_*}\|u\|_{2^*}
 \le C_n\|f\|_{q_*}\|\nabla u\|_2.
\end{equation}
After the limiting identity is justified, its bound uses only the data.
For the pressure, H\"older gives
\[
 \|(u\cdot\nabla)u\|_{\frac n{n-1}}
 \le\|u\|_{2^*}\|\nabla u\|_2,
 \qquad \frac{n-1}{n}=\frac1{2^*}+\frac12.
\]
Since $p=P*G$, the order-one potential bound and the order-zero bound for
$\nabla p$ give the pressure terms in \eqref{eq:basic-energy}. Every required
norm of the compact force is controlled by $n,R_0,\|f\|_\infty$.
Finally $\theta_+\le|p|+|u|^2/2$ gives its $L^{\frac n{n-2}}$ bound. In particular, no norm
of $\nabla f$ has been used.

\begin{lemma}\label{lem:signed}
For the normalized pressure of Lemma~\ref{lem:initial},
\begin{equation}\label{eq:signed}
 (-\Delta)^{-1}\theta(y)=\frac1{2\omega_n}
 \int_{\Rn}\frac{|u(x)\cdot(x-y)|^2}{|x-y|^n}\,dx+\mathcal F_\tau(y),
 \qquad \mathcal F_\tau=-\tau\partial_i\Psi*f_i.
\end{equation}
All displayed integrals converge absolutely. Moreover,
\begin{equation}\label{eq:force-remainder}
 \|\mathcal F_\tau\|_\infty\le C_nR_0^3\|f\|_\infty\le C_f,
 \qquad M\le C_f(1+K).
\end{equation}
\end{lemma}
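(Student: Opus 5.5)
The proof is a direct kernel computation. I would apply $\Gamma*$ to the normalized pressure \eqref{eq:pressure-normalization} and to $|u|^2/2$, and combine the two quadratic kernels. The key algebraic fact is that $\Gamma=-\Delta\Psi$ with $\Psi$ from \eqref{eq:GammaPsi}. Formally,
\[
 \Gamma*p=\partial_i\partial_j\Psi*(u_iu_j)-\tau\,\partial_i\Psi*f_i,\qquad
 \Gamma*\tfrac{|u|^2}{2}=-\tfrac12\delta_{ij}\,\Delta\Psi*(u_iu_j).
\]
Differentiating $|x|^{4-n}$ twice gives, for $x\ne0$,
\[
 \partial_i\partial_j\Psi(x)=-\frac{\delta_{ij}|x|^{2-n}}{2(n-2)\omega_n}+\frac{x_ix_j}{2\omega_n|x|^n},
 \qquad -\tfrac12\delta_{ij}\Delta\Psi=\tfrac12\delta_{ij}\Gamma=\frac{\delta_{ij}|x|^{2-n}}{2(n-2)\omega_n}.
\]
The isotropic parts cancel exactly, leaving the kernel $x_ix_j/(2\omega_n|x|^n)$. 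Since this kernel is even, contracting with $u_i(x)u_j(x)$ at $x-y$ yields $|u(x)\cdot(x-y)|^2/(2\omega_n|x-y|^n)$, which is \eqref{eq:signed} with $\mathcal F_\tau=-\tau\,\partial_i\Psi*f_i$.

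The rigorous step, which I expect to be the main obstacle, is commuting $\Gamma*$ with the order-zero operator $\partial_i\partial_j(-\Delta)^{-1}$. One must also confirm that no delta-type term appears. I would argue this way.
\begin{itemize}
\item The second derivatives of $\Psi$ are $O(|x|^{2-n})$, hence locally integrable. So the distributional second derivative of $\Psi$ coincides with the pointwise one: $\Psi$ is $C^1$ with integrable second derivatives, so there is no singular part.
\item For smooth compactly supported $h_{ij}$, Fourier multipliers give $\Gamma*\partial_i\partial_j(-\Delta)^{-1}h_{ij}=(\partial_i\partial_j\Psi)*h_{ij}$.
\item I then pass to $h_{ij}=u_iu_j$. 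This is legitimate because, by \eqref{eq:individual-decay}, $|u|^2\le C_u(1+|x|)^{4-2n}$. The estimate \eqref{eq:convolution} with $a=2$ and $\beta=2n-4>n$ then gives absolute convergence and dominated convergence.
\item On the other side, $\partial_i\partial_j(-\Delta)^{-1}h\to p_N$ in $L^{n/(n-2)}$. Its potential converges locally uniformly, using \eqref{eq:individual-potentials}-type bounds or a duality argument with $\Gamma\in L^{n/(n-2),\infty}$.
\item The force term is handled the same way, using the $C^1$ kernel.
\end{itemize}
The bound on $\mathcal F_\tau$ is immediate: $|\nabla\Psi(x)|\le C_n|x|^{3-n}$ and $\supp f\subset B_{R_0}$, so
\[
 |\mathcal F_\tau(y)|\le C_n\|f\|_\infty\int_{B_{R_0}(0)}|x-y|^{3-n}\,dx\le C_nR_0^3\|f\|_\infty .
\]
This holds uniformly in $y$, since the integral is maximized when $y$ is at the center of the ball.

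For the Morrey bound I use $\theta\le\theta_+$ and the positivity of $\Gamma$. These give $(-\Delta)^{-1}\theta\le K$, so for every $y$,
\[
 I(y):=\int_{\Rn}\frac{|u(x)\cdot(x-y)|^2}{|x-y|^n}\,dx\le 2\omega_n(K+C_f).
\]
Now fix a ball $B_r(z)$ and average $I(y)$ over $y\in B_{4r}(z)$. I use Tonelli and keep only the points $x\in B_r(z)$. For a fixed unit vector $\omega$ and $x\in B_r(z)$, I claim
\[
 \avint_{B_{4r}(z)}\frac{(\omega\cdot(x-y))^2}{|x-y|^n}\,dy\ge c_n r^{2-n}.
\]
The reason is that $|x-y|\le5r$ throughout the averaging ball. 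Moreover, the double cone $\{|\omega\cdot(x-y)|\ge|x-y|/2\}$ meets $B_{4r}(z)\setminus B_r(x)$ in a set of measure $\ge c_nr^n$, and on that set the integrand is $\ge \tfrac14(5r)^{2-n}$. Applying the claim with $\omega=u(x)/|u(x)|$ gives
\[
 c_nr^{2-n}\int_{B_r(z)}|u|^2\le\avint_{B_{4r}(z)}I(y)\,dy\le C(K+C_f).
\]
Taking the supremum over $z$ and $r$ yields $M\le C_f(1+K)$. This center averaging is exactly what recovers all velocity components from the single directional square.
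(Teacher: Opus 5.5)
Your proposal is correct in substance and follows the paper's overall plan. The shared steps are the Hessian of $\Psi$, exact cancellation of its isotropic part against $(-\Delta)^{-1}(|u|^2/2)$, the $R_0^3$ bound for $\mathcal F_\tau$, and center averaging of the directional integral to get $M\le C_f(1+K)$.

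The one genuinely different step is the identification $(-\Delta)^{-1}p=\partial_i\partial_j\Psi*(u_iu_j)-\tau\partial_i\Psi*f_i$. You prove the operator identity by Fourier multipliers for smooth compactly supported $h_{ij}$ and then approximate $u_iu_j$. The paper never composes $\Gamma*$ with the Calder\'on--Zygmund operator. It defines $Q$ as the right-hand side and checks absolute convergence and $|Q(y)|\le C_u(1+|y|)^{3-n}$. It then verifies $-\Delta Q=p$ in distributions and concludes $Q=(-\Delta)^{-1}p$, because the difference is harmonic and tends to zero at infinity.

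The paper's route uses only the pointwise decay already available for the individual fixed point. Yours makes the identity $(-\Delta)^{-1}\partial_i\partial_j(-\Delta)^{-1}=\partial_i\partial_j(-\Delta)^{-2}$ transparent and relies on integrability of $u_iu_j$ rather than its pointwise decay. The price is that the approximation must converge in a norm strong enough to pass through the Newtonian potential. Your cone lower bound on $B_{4r}(z)$ is a cruder but valid substitute for the paper's exact spherical computation on $B_r(x)\subset B_{2r}(z)$.

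Two justifications need correcting. First, $\Psi=c_n|x|^{4-n}$ is not $C^1$ for $n\ge16$: it is unbounded at the origin, and $\nabla\Psi$ is of order $|x|^{3-n}$. Your conclusion that $\partial_i\partial_j\Psi$ has no part supported at $0$ is still right. The correct reason is that, in the two integrations by parts on $\{|x|>\varepsilon\}$, the boundary terms are $O(\varepsilon^{n-1}\varepsilon^{4-n})=O(\varepsilon^3)$ and $O(\varepsilon^{n-1}\varepsilon^{3-n})=O(\varepsilon^2)$. The force term needs no ``$C^1$ kernel'': Fubini applies directly because $|\Gamma|*|\nabla\Gamma|\le C_n|x|^{3-n}$.

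Second, convergence $\partial_i\partial_j(-\Delta)^{-1}h^k\to p_N$ in $L^{n/(n-2)}$ does not control $\Gamma*(\cdot)$ pointwise, since $\Gamma\notin L^{n/2}$ near the origin when $n>4$. Lorentz duality with $\Gamma\in L^{n/(n-2),\infty}$ would need convergence in $L^{n/2,1}$, not in $L^{n/(n-2)}$. The repair is immediate. Your approximants are dominated by $C_u(1+|x|)^{4-2n}$, so $h^k\to u_iu_j$ in every $L^q$ with $1<q<\infty$. Calder\'on--Zygmund bounds then give convergence in $L^{q_1}\cap L^{q_2}$ with $q_1<n/2<q_2$, which controls the near and far parts of the potential uniformly in $y$.
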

\begin{proof}
For $x\ne0$, differentiate \eqref{eq:GammaPsi} to obtain
\begin{equation}\label{eq:Hessian-Psi}
 \partial_i\partial_j\Psi(x)
 =-\frac{\delta_{ij}|x|^{2-n}}{2(n-2)\omega_n}
   +\frac{x_ix_j}{2\omega_n|x|^n}.
\end{equation}
These derivatives are locally integrable. There is no extra distribution
supported at zero: the boundary terms in two integrations by parts on
$\{|x|>\varepsilon\}$ are bounded by constants times $\varepsilon^2$ or
$\varepsilon^3$ against a fixed smooth test, and vanish as
$\varepsilon\downarrow0$.

Define
\[
 Q=\partial_i\partial_j\Psi*(u_i u_j)-\tau\partial_i\Psi*f_i.
\]
The velocity product is bounded, integrable, and decays as
$O((1+|x|)^{4-2n})$. The two kernels have orders two and three. Absolute
convergence follows from these facts and compact support of $f$; the same
near/far estimates give $|Q(y)|\le C_u(1+|y|)^{3-n}$. Distributional
differentiation and \eqref{eq:pressure-normalization} imply $-\Delta Q=p$.
By \eqref{eq:individual-potentials}, $(-\Delta)^{-1}p$ is also absolutely defined,
tends to zero at infinity, and satisfies $-\Delta (-\Delta)^{-1}p=p$. Thus
$Q-(-\Delta)^{-1}p$ is harmonic and tends to zero at infinity. The maximum principle
on expanding balls, or the mean-value property, makes it zero. This
identifies the twice-integrated pressure without an exchange of singular
convolutions or an unidentified harmonic remainder.

Adding $(-\Delta)^{-1}(|u|^2/2)$ to $Q$, the first term in
\eqref{eq:Hessian-Psi} cancels exactly, proving \eqref{eq:signed}. Its
remaining velocity integral is nonnegative and absolutely convergent, with
local kernel bounded by $|x-y|^{2-n}|u(x)|^2$. For the force term,
$|\nabla\Psi(z)|\le C_n|z|^{3-n}$, and
\[
 \sup_y\int_{B_{R_0}}|x-y|^{3-n}\,dx\le C_nR_0^3.
\]
If $|y|\le2R_0$, enclose the integration set in $B_{3R_0}(y)$ and integrate
radially; otherwise use $|x-y|\ge|y|/2$. This proves its data bound.

Let
$\mathcal R(y)=\int|u(x)\cdot(x-y)|^2|x-y|^{-n}\,dx$.
Positivity of $\Gamma$, the inequality $\theta\le \theta_+$, and
\eqref{eq:signed} imply $0\le\mathcal R(y)\le C_f(1+K)$. Fix a center $z$
and radius $r>0$. For $x\in B_r(z)$, $B_r(x)\subset B_{2r}(z)$, so Tonelli
and spherical integration give
\begin{align}
 C_f(1+K)r^n
 &\ge\int_{B_{2r}(z)}\mathcal R(y)\,dy\notag\\
 &\ge\int_{B_r(z)}\int_{B_r(x)}
      \frac{|u(x)\cdot(x-y)|^2}{|x-y|^n}\,dy\,dx
 =\frac{\omega_nr^2}{2n}\int_{B_r(z)}|u(x)|^2\,dx.
 \label{eq:spherical-average}
\end{align}
The last equality uses
$\int_{S^{n-1}}|a\cdot\omega|^2\,d\omega=\omega_n|a|^2/n$ and
$\int_0^r s\,ds=r^2/2$. Dividing by $r^n$ and taking the supremum gives
\begin{equation}\label{eq:Morrey-bound}
 M\le C_f(1+K).
\end{equation}
\end{proof}

\subsection{Estimating the Bernoulli source as a flux}
Use the individually bounded skew potential $d$ from Lemma~\ref{lem:initial}
and write $L_d=-\Delta+u\cdot\nabla$. The Bernoulli identity is
\begin{equation}\label{eq:Bernoulli}
 L_d\theta=-D(u)+\tau f\cdot u-\tau\Div f,
 \qquad D(u)=\frac12\sum_{i,j}(\partial_i u_j-\partial_j u_i)^2\ge0.
\end{equation}
For completeness, the velocity equation dotted with $u$ gives
\[
 -\Delta\frac{|u|^2}{2}+u\cdot\nabla\theta
 =\tau f\cdot u-|\nabla u|^2.
\]
Taking its divergence instead gives
$-\Delta p=\partial_i u_j\partial_j u_i-\tau\Div f$.
Add these two identities and use
$D(u)=|\nabla u|^2-\partial_i u_j\partial_j u_i$.
All products are legitimate locally for the regular solution, and the
result holds in distributions for bounded $f$. The derivative of $f$
remains in divergence form throughout.

\begin{proposition}\label{prop:head}
Every weighted fixed point satisfies
\begin{equation}\label{eq:head-bound}
 \|\theta_+\|_\infty\le C_f(1+K)^{(n-2)/(2n)},\qquad
 \|\theta_+\|_{2^*}\le C_f.
\end{equation}
More precisely, with
\begin{equation}\label{eq:source-flux}
 g=\tau|f||u|,\qquad F_g=\nabla (-\Delta)^{-1}g,\qquad F=F_g+\tau f,
\end{equation}
there is an energy function $v$ satisfying $L_dv=-\Div F$ such that
$\theta\le v$, $\|\nabla v\|_2+\|v\|_{2^*}\le C_f$, and
$\|v\|_\infty\le C_f(1+K)^{(n-2)/(2n)}$.
\end{proposition}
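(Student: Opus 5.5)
The plan is to construct $v$ directly from Lemma~\ref{lem:flux-sup}, compare it with $\theta$ through Lemma~\ref{lem:comparison}, and read off both bounds in \eqref{eq:head-bound} from $\theta_+\le v_+$.

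\textbf{Step 1: rewrite the Bernoulli equation.} Since $|\tau f\cdot u|\le g$, we have $\tau f\cdot u=g-(g-\tau f\cdot u)$ with $g-\tau f\cdot u\ge0$. By Lemma~\ref{lem:newton-flux}, $g=-\Div F_g$. Hence \eqref{eq:Bernoulli} becomes
\[
 L_d\theta=-\Div F-\mu,\qquad \mu:=D(u)+g-\tau f\cdot u\ge0 .
\]
To see that $\mu\in L^1$: $D(u)\le 2|\nabla u|^2\in L^1$, and $g\in L^1$ because $f$ has compact support and $u$ is individually bounded.

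\textbf{Step 2: bound the flux.} The energy bound \eqref{eq:basic-energy} gives
\[
 \|g\|_{q_*}\le\|f\|_{n/2}\|u\|_{2^*}\le C_f ,
\]
so Lemma~\ref{lem:newton-flux} yields $\|F_g\|_2\le C_f$. For the Morrey quantity, Cauchy--Schwarz on $B_r(z)$ gives
\[
 \int_{B_r(z)}g\le\|f\|_\infty|B_r|^{1/2}\Bigl(\int_{B_r(z)}|u|^2\Bigr)^{1/2}\le C_f\,r^{n/2}\,M^{1/2}r^{(n-2)/2}=C_fM^{1/2}r^{n-1}.
\]
Thus $m_1(g)\le C_fM^{1/2}\le C_f(1+K)^{1/2}$ by Lemma~\ref{lem:signed}, and so $[F_g]_{\BMO}\le C_f(1+K)^{1/2}$. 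The force part satisfies $\|\tau f\|_2\le C_f$ and $[\tau f]_{\BMO}\le2\|f\|_\infty$. Altogether,
\[
 \|F\|_2\le C_f,\qquad [F]_{\BMO}\le C_f(1+K)^{1/2}.
\]

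\textbf{Step 3: define $v$ and bound it.} Let $v\in\Hdot$ be the energy solution of $L_dv=-\Div F$ from Lemma~\ref{lem:flux-sup}. The energy estimate \eqref{eq:scalar-energy} gives $\|\nabla v\|_2+\|v\|_{2^*}\le C_f$. Estimate \eqref{eq:scalar-sup} gives
\[
 \|v\|_\infty\le C_nC_f^{2/n}\bigl(C_f(1+K)^{1/2}\bigr)^{1-2/n}=C_f(1+K)^{(n-2)/(2n)} .
\]
This constant does not depend on $\|d\|_\infty$, which is the whole point of using Lemma~\ref{lem:flux-sup}.

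\textbf{Step 4: comparison.} Set $r=\theta-v$. By Lemma~\ref{lem:initial}, $\theta\in\Hdot\cap L^\infty$, and by Step 3, $v\in\Hdot\cap L^\infty$, so $r\in\Hdot\cap L^\infty$. Moreover $L_dr=-\mu$ in distributions, where the operator identity \eqref{eq:drift-representation} links the nondivergence form in \eqref{eq:Bernoulli} to the divergence form $L_d$ of Lemma~\ref{lem:flux-sup}. Since $d$ is bounded, Lemma~\ref{lem:comparison} applies and gives $r\le0$, i.e. $\theta\le v$. Therefore $\theta_+\le v_+$, which yields $\|\theta_+\|_\infty\le\|v\|_\infty$ and $\|\theta_+\|_{2^*}\le\|v\|_{2^*}\le C_f$, completing \eqref{eq:head-bound}.

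\textbf{Main obstacle.} The delicate point is Step 4's bookkeeping. The distributional identity for $\theta$ must hold in exactly the divergence form that Lemma~\ref{lem:comparison} tests. The weak formulation for $v$ tested against compactly supported $W^{1,2}\cap L^\infty$ functions must match the distributional equation, which follows by the density argument already used in Lemma~\ref{lem:flux-sup}. The remaining requirement, $\mu\in L^1$, is verified in Step 1.
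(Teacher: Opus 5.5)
Your proposal is correct and follows essentially the same route as the paper. The paper also bounds $\|F\|_2$ via $\|g\|_{q_*}\le\|f\|_{n/2}\|u\|_{2^*}$ and $[F]_{\BMO}$ via $m_1(g)\le C_fM^{1/2}$ with Lemma~\ref{lem:signed}, then applies Lemma~\ref{lem:flux-sup} to $v$ and compares $\theta-v$ using the same $\mu=D(u)+\tau(|f||u|-f\cdot u)$. The only cosmetic difference is that you justify $g\in L^1$ by compact support and individual boundedness of $u$, whereas the paper uses H\"older with $\|f\|_{q_*}\|u\|_{2^*}$; both work.
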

\begin{proof}
There are two different estimates for $F$. First,
\begin{equation}\label{eq:g-qstar}
 \|g\|_{q_*}\le\|f\|_{n/2}\|u\|_{2^*}\le C_f,
 \qquad \frac1{q_*}=\frac2n+\frac1{2^*}.
\end{equation}
The energy bound, not an unknown supremum of $u$, is used here.
The exponent identity is
\[
 \frac{n+2}{2n}=\frac2n+\frac{n-2}{2n};
\]
hence the first inequality of \eqref{eq:g-qstar} is exactly H\"older
with $f\in L^{n/2}$ and $u\in L^{2^*}$. Lemma~\ref{lem:newton-flux} and $f\in L^2$ give
\begin{equation}\label{eq:F-L2}
 \|F\|_2\le C_n\|g\|_{q_*}+\|f\|_2\le C_f.
\end{equation}

Second, Cauchy--Schwarz on each ball yields
\begin{equation*}
 \int_{B_r(z)}g
 \le\|f\|_\infty|B_r|^{1/2}
       \left(\int_{B_r(z)}|u|^2\right)^{1/2}
 \le C_n\|f\|_\infty M^{1/2}r^{n-1}.
\end{equation*}
Consequently $m_1(g)\le C_fM^{1/2}$. Since a bounded field has BMO
seminorm at most twice its supremum norm,
\begin{equation}\label{eq:F-BMO}
 [F]_{\BMO}\le C_nm_1(g)+2\|f\|_\infty
 \le C_f(1+M^{1/2})\le C_f(1+K)^{1/2}.
\end{equation}
The flux $F$, not the skew matrix $d$, is the BMO quantity in this estimate.

Let $v\in\Hdot$ be the unique solution of $L_dv=-\Div F$. The coefficient
$d$ is bounded for this fixed point, so Lemma~\ref{lem:flux-sup} applies.
The flux bounds \eqref{eq:F-L2} and \eqref{eq:F-BMO}, inserted
into the scalar estimate \eqref{eq:scalar-sup}, give
\[
 \|v\|_\infty
 \le C_n\|F\|_2^{2/n}[F]_{\BMO}^{1-2/n}
 \le C_f\bigl((1+K)^{1/2}\bigr)^{1-2/n}.
\]
Therefore
\begin{equation}\label{eq:v-sup}
 \|v\|_\infty\le C_f(1+K)^{(n-2)/(2n)}.
\end{equation}
Because $-\Div F_g=g$, the sign in its equation is
$L_dv=g-\tau\Div f$. Subtract it from \eqref{eq:Bernoulli}:
\begin{equation}\label{eq:comparison-mu}
 L_d(\theta-v)=-\mu,\qquad
 \mu=D(u)+\tau(|f||u|-f\cdot u)\ge0.
\end{equation}
The function $D(u)$ is integrable by energy. Also
$\int|f||u|\le\|f\|_{q_*}\|u\|_{2^*}<\infty$, so $\mu\in L^1$.
Lemma~\ref{lem:initial} and \eqref{eq:v-sup} show
$\theta-v\in\Hdot\cap L^\infty$. The comparison principle
(Lemma~\ref{lem:comparison}), whose cutoff limit is taken at this fixed
solution, proves $\theta\le v$. Hence $0\le \theta_+\le v_+$ and
$\|\theta_+\|_\infty\le\|v\|_\infty$.

Finally, the energy test for $v$, before its supremum estimate is used,
also gives
\begin{equation}\label{eq:comparison-energy}
 \|\nabla v\|_2^2=\int F\cdot\nabla v,
 \qquad \|\nabla v\|_2+\|v\|_{2^*}\le C_n\|F\|_2\le C_f.
\end{equation}
Therefore
\begin{equation}\label{eq:h-2star}
 \|\theta_+\|_{2^*}\le\|v_+\|_{2^*}\le\|v\|_{2^*}\le C_f.
\end{equation}
This gain does not cost any power of $(1+K)$. The comparison function $v$ is
not assumed nonnegative; the inequality $\theta_+\le v_+$ is sufficient.
\end{proof}

\subsection{Closing the Newtonian potential}
\begin{proposition}\label{prop:closure}
Every weighted fixed point for $0\le\tau\le1$ satisfies
\begin{equation}\label{eq:closure-bound}
 K+M+\|\theta_+\|_\infty+\|\theta_+\|_{2^*}\le C_f.
\end{equation}
The estimate is valid for each fixed integer $n\ge16$, without a smallness
condition on $f$.
\end{proposition}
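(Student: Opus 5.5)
The plan is to combine Proposition~\ref{prop:head} with Lemma~\ref{lem:potential} and then absorb the resulting sublinear power of $K$. Fix a weighted fixed point for some $\tau\in[0,1]$. By Lemma~\ref{lem:initial}, $K<\infty$ for this individual solution; the constant there may depend on the fixed point, but this qualitative finiteness is all the absorption step needs. Since $\theta_+\ge0$, $\theta_+\in L^{2^*}\cap L^\infty$ by \eqref{eq:head-bound}, and $n\ge16>6$, Lemma~\ref{lem:potential} applies with $s=2^*$. Taking the supremum, estimate \eqref{eq:potential-2star} gives
\[
 K=\|(-\Delta)^{-1}\theta_+\|_\infty\le C_n\|\theta_+\|_{2^*}^{4/(n-2)}\|\theta_+\|_\infty^{(n-6)/(n-2)}.
\]

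Next insert the two bounds of Proposition~\ref{prop:head}: $\|\theta_+\|_{2^*}\le C_f$ and $\|\theta_+\|_\infty\le C_f(1+K)^{(n-2)/(2n)}$. This yields
\[
 K\le C_f(1+K)^{\frac{n-2}{2n}\cdot\frac{n-6}{n-2}}=C_f(1+K)^{(n-6)/(2n)}.
\]
Here $\alpha:=(n-6)/(2n)<1/2<1$. Because $K$ is a finite nonnegative number, we have $1+K\le C_f(1+K)^\alpha+1\le (C_f+1)(1+K)^\alpha$. Hence $(1+K)^{1-\alpha}\le C_f+1$, so $K\le C_f$ with a constant depending only on $n,R_0,\|f\|_\infty$. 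The constant is uniform in $\tau$, since every constant in Proposition~\ref{prop:head} and Lemma~\ref{lem:signed} is.

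With $K$ bounded, the remaining quantities follow directly. Lemma~\ref{lem:signed} gives $M\le C_f(1+K)\le C_f$. Estimate \eqref{eq:head-bound} gives $\|\theta_+\|_\infty\le C_f(1+K)^{(n-2)/(2n)}\le C_f$ and $\|\theta_+\|_{2^*}\le C_f$. Summing these proves \eqref{eq:closure-bound}.

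The only delicate point is the absorption step. It is legitimate only because $K<\infty$ is known beforehand for each individual fixed point; otherwise an inequality $K\le C(1+K)^\alpha$ would be vacuous. This is exactly why Lemma~\ref{lem:initial} was proved before any uniform estimate. No upper restriction on $n$ enters: $n>6$ is needed for $s=2^*<n/2$ in Lemma~\ref{lem:potential}, and the exponent $\alpha$ stays below $1$ for every $n$. No smallness of $f$ is used.
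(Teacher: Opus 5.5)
Your proposal is correct and matches the paper's proof. You apply Lemma~\ref{lem:potential} with $s=2^*$, insert both bounds of Proposition~\ref{prop:head} to get $K\le C_f(1+K)^{(n-6)/(2n)}$, absorb this using the a priori finiteness of $K$ from Lemma~\ref{lem:initial}, and then read off $M$ and $\|\theta_+\|_\infty$ from \eqref{eq:Morrey-bound} and \eqref{eq:head-bound}; your absorption via $(1+K)^{1-\alpha}\le C_f+1$ is only a tidier form of the paper's case split $K\ge1$ versus $K<1$.
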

\begin{proof}
All quantities are finite by Lemma~\ref{lem:initial}. Since $n>6$,
Lemma~\ref{lem:potential} applies with $s=2^*$. In view of
\eqref{eq:h-2star},
\begin{equation}\label{eq:improved-potential}
 K\le C_n\|\theta_+\|_{2^*}^{4/(n-2)}\|\theta_+\|_\infty^{(n-6)/(n-2)}
 \le C_f\|\theta_+\|_\infty^{(n-6)/(n-2)}.
\end{equation}
The bound also implies $K\le C_f(1+\|\theta_+\|_\infty^{(n-6)/(n-2)})$. This only enlarges the right-hand side; the pressure remains the decaying representative in~\eqref{eq:pressure-normalization}. Indeed, replacing $p$ by $p+c$ with $c>0$ would give $(p+c+|u|^2/2)_+\ge c/2$ outside a sufficiently large ball, by~\eqref{eq:individual-decay}, and its Newtonian potential would be infinite.

Combining \eqref{eq:improved-potential} with \eqref{eq:head-bound} gives
\begin{equation}\label{eq:closure-improved}
 K\le C_f(1+K)^{(n-6)/(2n)},\qquad
 \frac{n-2}{2n}\frac{n-6}{n-2}=\frac{n-6}{2n}<1.
\end{equation}
If $K\ge1$, then
$K^{(n+6)/(2n)}\le 2^{(n-6)/(2n)}C_f$; if $K<1$, it is already bounded.
This yields a data bound for $K$. Equations \eqref{eq:Morrey-bound} and
\eqref{eq:head-bound} then bound $M$ and $\|\theta_+\|_\infty$, and \eqref{eq:h-2star} supplies the
last term of \eqref{eq:closure-bound}.

\end{proof}


\section{Completing the weighted existence argument}\label{sec:existence}
The scalar estimates must be converted into a uniform bound in $X$, not merely
a bound on compact subsets. We state the local inputs with their actual
hypotheses, prove uniform energy tightness, and retain the small rescaled
$L^2$ norm through a dimension-dependent but finite Stokes iteration.

\subsection{Local regularity and energy tightness}
\begin{lemma}\label{input:Bernoulli}
Let $n\ge2$, $r>n/2$, and let $(v,\pi)$ be a regular solution of
\[
 -\Delta v+(v\cdot\nabla)v+\nabla\pi=F,\qquad\Div v=0
 \quad\text{in }B_1,
\]
with $F\in L^\infty(B_1)$. If
\begin{equation}\label{eq:local-input}
 \|v\|_{W^{1,2}(B_1)}+\|\pi\|_{W^{1,n/(n-1)}(B_1)}
 +\|F\|_{L^\infty(B_1)}
 +\|(\pi+|v|^2/2)_+\|_{L^r(B_1)}\le C_0,
\end{equation}
then
$\|v\|_{L^\infty(B_{1/2})}+\|\nabla v\|_{L^\infty(B_{1/2})}\le C$.
The constant depends on $n,C_0$ and a positive lower bound for $r-n/2$.
\end{lemma}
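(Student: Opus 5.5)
This lemma is the local Bernoulli criterion of Li--Yang, \cite[Proposition~2.3]{LY2022}. As noted in the introduction, it is stated there without the restriction $n\le15$. My first step is therefore to check that its hypotheses coincide with \eqref{eq:local-input}: a regular solution on $B_1$, bounded force, $W^{1,2}$ velocity, $W^{1,n/(n-1)}$ pressure, and the positive Bernoulli function in $L^r$ with $r>n/2$. I would also confirm that no step of its proof uses an upper bound on $n$. The dimensional restriction in \cite{LY2022} enters only through the global Lemma~2.11 and the range \eqref{eq:LY-exponent-range}, not through the local criterion. So the lemma should follow by citation, with constants tracked in terms of $C_0$ and a lower bound for $r-n/2$. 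For completeness, I would sketch the argument as follows.

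\textbf{Step 1 (bound on $\theta_+$).} Set $\theta=\pi+|v|^2/2$. As in \eqref{eq:Bernoulli}, it satisfies
\[
-\Delta\theta+v\cdot\nabla\theta=-D(v)+F\cdot v-\Div F\le F\cdot v-\Div F .
\]
I would run a De Giorgi--Moser iteration for $\theta_+$ on shrinking balls, testing with $\phi^2(\theta-k)_+$. The drift term is handled with $\Div v=0$, integrating it onto the cutoff as $\tfrac12\int v\cdot\nabla\phi^2\,(\theta-k)_+^2$. This term involves $v$, which is only in $L^{2^*}$ locally. Its control comes from $|v|^2\le 2\theta_++2|\pi|$, with $\pi\in L^{n/(n-2)}$, together with the hypothesis $\theta_+\in L^r$, $r>n/2$. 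The condition $r>n/2$ is exactly what is needed for $\theta_+$, viewed as a potential coefficient, to lie in the Kato/Morrey class in which the iteration gains.

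\textbf{Step 2 (velocity and bootstrap).} From the subsolution property I would derive an $L^\infty(B_{3/4})$ bound for $\theta_+$. Since $\tfrac12|v|^2\le\theta_++|\pi|$, it then remains to improve $\pi$. I would use local pressure estimates for $-\Delta\pi=\partial_i\partial_j(v_iv_j)-\Div F$ together with interpolation to show $v\in L^q(B_{2/3})$ for some $q>n$. Local Stokes regularity, in the spirit of Lemma~\ref{lem:interior-Stokes}, then bootstraps to $W^{2,q}$ bounds and finally to $\|v\|_{L^\infty(B_{1/2})}+\|\nabla v\|_{L^\infty(B_{1/2})}\le C$.

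\textbf{Main obstacle.} The key difficulty is the drift term in Step~1, since $v$ is not a priori in $L^n$. I expect to handle it by using the one-sided control of $|v|^2$ through $\theta_+$ and the pressure, exactly as in \cite{LY2022}. Rather than re-deriving this in full, I would rely on their proof and verify that it is dimension-uniform.
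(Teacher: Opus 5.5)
Your operative argument is the paper's: quote \cite[Proposition~2.3]{LY2022}, whose hypotheses are \eqref{eq:local-input} and whose stated range is $n\ge2$. The paper gives no further proof and uses that proposition as an external input. The sketch you add ``for completeness'' is a different matter. It is not the argument of \cite{LY2022}, and it is not a workable substitute for $n\ge6$, which includes the whole range $n\ge16$ needed here. Any dimension-uniformity check would have to be made on the actual proof in \cite{LY2022}, not on this sketch.

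\textbf{Step~1 fails at the drift term.} Test with $\phi^2\theta_+^\beta$ or $\phi^2(\theta-k)_+$. The drift leaves $\frac1{\beta+1}\int\theta_+^{\beta+1}\,v\cdot\nabla\phi^2$. With only Lebesgue control of a divergence-free drift, the iteration gains only if $v\in L^q_{\loc}$ with $q>n/2$. The hypotheses give $v\in L^{2^*}$, and $2^*\le n/2$ once $n\ge6$. Concretely, one Moser step takes $\theta_+\in L^{(\beta+1)2n/(n+2)}$ to $\theta_+\in L^{(\beta+1)n/(n-2)}$, a gain factor of $(n+2)/(2(n-2))\le1$.

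The splitting $|v|^2\le2\theta_++2\pi_-$ repairs only the $\theta_+$ part. The pressure part $\pi_-$ is only in $L^{n/(n-2)}$, by Sobolev from $W^{1,n/(n-1)}$. So $\pi_-^{1/2}$ is again only in $L^{2^*}$, and the iteration still does not gain.

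\textbf{Step~2 is circular.} Local Calder\'on--Zygmund bounds control $\pi$ in $L^s$ by $|v|^2$ in $L^s$ on a larger ball. Then $|v|^2\le2\theta_++2\pi_-$ feeds $\pi$ back with a constant of order one. Nothing closes without a smallness mechanism.

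\textbf{What \cite{LY2022} actually does.} As the paper notes, that proof combines weighted Bernoulli estimates with the Tian--Xin vorticity criterion, the fixed-scale form recorded in Lemma~\ref{input:vorticity}. Singular-weight tests of the Bernoulli equation, together with $D(v)\ge0$, control weighted vorticity integrals centred at each point. This gives the small-scale smallness of $\rho^{4-n}\int_{B_\rho(y)}|\Omega(v)|^2$ that the criterion requires, and the criterion then gives the gradient bound. It is not a De Giorgi iteration for $\theta_+$ followed by a pressure bootstrap.

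Your reading of $r>n/2$ as a Kato-type condition is the right intuition. For example, $\int_{B_\rho(y)}\theta_+|x-y|^{2-n}\,dx\le C\|\theta_+\|_{L^r}\rho^{2-n/r}$. But it pays off inside those weighted estimates, not in the iteration you describe.
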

This is \cite[Proposition~2.3]{LY2022}. It is a theorem about regular
solutions in a ball, and its dimension range is stated there as $n\ge2$. Its proof uses the
weighted Bernoulli estimates and the Tian--Xin vorticity criterion. We use
that established proposition as an external input.

\begin{lemma}\label{input:vorticity}
For $n\ge3$ and $M_0<\infty$, there are $\varepsilon_0>0$ and $C<\infty$
depending only on $n,M_0$ with the following property. Let $(v,\pi)$ be a
regular homogeneous stationary solution in $B_2$ with
$\|v\|_{L^2(B_2)}\le M_0$. Write
$\Omega(v)=(\partial_i v_j-\partial_j v_i)_{i,j}$. If
\begin{equation}\label{eq:vorticity-condition}
 \rho^{4-n}\int_{B_\rho(y)}|\Omega(v)|^2<\varepsilon_0
 \quad(y\in B_1,\ 0<\rho<1/2),
\end{equation}
then $\|\nabla v\|_{L^\infty(B_{1/2})}\le C$.
\end{lemma}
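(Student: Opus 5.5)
The plan is to prove Lemma~\ref{input:vorticity} as a Tian--Xin type $\varepsilon$-regularity statement. The hypothesis \eqref{eq:vorticity-condition} is scale invariant, since $\rho^{4-n}\int_{B_\rho}|\Omega|^2$ is unchanged under $v_\lambda(x)=\lambda v(\lambda x)$. I will therefore run a Morrey--Campanato decay iteration for $\nabla v$ on small balls centered in $B_1$, upgrade the decay to H\"older continuity, and then bootstrap to $L^\infty$. The pressure is eliminated throughout by working with the vorticity equation. Taking the antisymmetrized derivative of the momentum equation gives, in distributions,
\[
 -\Delta\Omega_{ij}=-\partial_i\partial_k(v_kv_j)+\partial_j\partial_k(v_kv_i).
\]
The right side is a second-order divergence of the quadratic tensor $v\otimes v$.

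\textbf{Step 1: splitting $v$ on a ball.} Fix $y\in B_1$ and $0<\rho<1/2$, and take a cutoff $\eta$ supported in $B_{2\rho}(y)$. Since $\Div v=0$, we have $-\Delta v_j=\partial_i\Omega_{ij}$. Hence
\[
 v=\nabla(-\Delta)^{-1}\!\cdot(\eta\Omega)+h,
\]
where the first term is the localized Biot--Savart part and $h$ is harmonic in $B_\rho(y)$. Calder\'on--Zygmund bounds control the Biot--Savart part of $\nabla v$ in $L^2$ by $\|\Omega\|_{L^2(B_{2\rho})}$. The harmonic part obeys the usual decay $\int_{B_{\sigma}}|\nabla h-(\nabla h)_\sigma|^2\le C(\sigma/\rho)^{n+2}\int_{B_\rho}|\nabla h|^2$. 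Combining these, the rescaled quantity $E(\rho)=\rho^{4-n}\int_{B_\rho(y)}|\nabla v|^2$ is bounded by $C\varepsilon_0$ plus a term that decays geometrically, at least down to small scales. The initial size on $B_2$ is controlled by $M_0$ through a Caccioppoli-type estimate; this is where the dependence on $M_0$ enters. Adams/Morrey embedding then places $v-\bar v_\rho$ in the Morrey space of $L^n$ scaling, with small norm. Here $\bar v_\rho$ is the average of $v$ over $B_\rho(y)$.

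\textbf{Step 2: decay iteration (the main obstacle).} I rewrite the vorticity equation as
\[
 -\Delta\Omega+\bar v_\rho\cdot\nabla\Omega=\partial\partial\bigl((v-\bar v_\rho)\otimes(v-\bar v_\rho)\bigr)+(\text{terms linear in }v-\bar v_\rho\text{ with the constant }\bar v_\rho).
 \]
I then compare $\Omega$ with the solution of the constant-drift linear equation on $B_\rho(y)$. That comparison solution enjoys Campanato decay with constants uniform in the drift, because the drift is skew and integrates away in energy tests, exactly as in Lemma~\ref{lem:flux-sup}. The quadratic remainder is small in the Morrey norm by Step 1 and $\varepsilon_0$. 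The delicate part is the linear terms carrying $\bar v_\rho$. For these I plan to use that $|\bar v_\rho|\rho\le C\,(\rho^{2-n}\int_{B_\rho}|v|^2)^{1/2}$, which is small once the scale is small relative to $M_0$. That smallness is what makes $\varepsilon_0$ and $C$ depend on $M_0$. The expected outcome is an inequality
\[
 \phi(\theta\rho)\le\tfrac12\phi(\rho)+C\varepsilon_0^{1/2}\phi(\rho),\qquad \phi(\rho)=\rho^{4-n}\!\int_{B_\rho(y)}|\Omega-\Omega_\rho|^2+\rho^{2-n}\!\int_{B_\rho(y)}|v-\bar v_\rho|^2 .
 \]
Iterating it yields $\phi(\rho)\le C\rho^{2\alpha}$ uniformly in $y\in B_1$.

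\textbf{Step 3: bootstrap.} Campanato's characterization turns the decay into $v\in C^{\alpha}(B_{3/4})$ with a bound depending on $(n,M_0)$. With $v$ H\"older, the right side $\partial\partial(v\otimes v)$ of the vorticity equation lies in $C^{\alpha-2}$ in divergence form. Schauder theory for the Laplacian then gives $\Omega\in C^{\alpha}_{\loc}$, and hence $\nabla v\in C^\alpha(B_{1/2})$ through $-\Delta v=\Div\Omega$. In particular $\|\nabla v\|_{L^\infty(B_{1/2})}\le C$. Regularity of the solution justifies every distributional manipulation. Only the constants need to be tracked, and they depend on $n$, $M_0$ and $\varepsilon_0$.
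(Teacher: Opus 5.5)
Your Step~1 is sound. The overall architecture is a reasonable way to \emph{reprove} the Tian--Xin criterion: Morrey smallness of $\Omega$, then Morrey smallness of $\nabla v$ via $-\Delta v_j=\partial_i\Omega_{ij}$, then a Campanato iteration for the vorticity equation, then Schauder. The paper does not do this. It imports Theorem~A of Li--Yang and only checks the fixed-scale reduction: apply it on $B_1(x)\subset B_2$ for $x\in B_{1/2}$; centers in $B_{1/2}(x)$ lie in $B_1$, so \eqref{eq:vorticity-condition} is exactly the required smallness. In your version, Step~2 carries the entire content of that theorem. It is only announced (``the expected outcome is''), the justifications you give for it are incorrect, and Step~3 does not follow from the decay you state.

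\emph{(i) No drift-uniform decay.} The constant-drift comparison problem $-\Delta h+a\cdot\nabla h=0$ has no Campanato decay uniform in $a=\bar v_\rho$. In Lemma~\ref{lem:flux-sup} the skew term drops out only because the test $(w-k)_+$ is global. Any interior estimate needs a cutoff $\eta$, and then $\int(a\cdot\nabla h)h\eta^2=-\int h^2\eta\,a\cdot\nabla\eta$ remains, of relative size $|a|\rho$. Explicitly, take $a>0$, $h=e^{-\mu x_1}\cos(kx_2)$ with $\mu^2+a\mu=k^2$ and $k^2=a/\rho$. Then $-\Delta h+a\partial_1h=0$ and $|h|\le e$ on $B_\rho$, but once $a\rho>\pi^2$ there are points of $B_{\rho/2}$ where $|\nabla h|\ge\sqrt{a\rho}/\rho$. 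So you genuinely need $|\bar v_\rho|\rho\lesssim1$.

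\emph{(ii) The smallness of $|\bar v_\rho|\rho$.} Your reason is backwards. The bound $\|v\|_{L^2(B_2)}\le M_0$ only gives $\rho^{2-n}\int_{B_\rho}|v|^2\le M_0^2\rho^{2-n}$, which blows up as $\rho\to0$. The smallness must come from Step~1 by dyadic telescoping: $|\bar v_r-\bar v_{r/2}|\le C\varepsilon_0^{1/2}r^{-1}$ for $r\le\rho_1$, which gives $\rho|\bar v_\rho|\le C\varepsilon_0^{1/2}+CM_0\rho_1^{-n/2}\rho$. Also, once $\bar v_\rho\cdot\nabla\Omega$ is moved to the left, no linear terms remain, since $\partial_i\partial_k(w_k\bar v_{\rho,j})=\bar v_{\rho,j}\partial_i\Div w=0$.

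\emph{(iii) The quadratic term.} You must estimate $\partial\partial(w\otimes w)$ through $\|w\|_{L^4(B_\rho)}^2$. For $n\ge5$ this is not controlled by the single-ball quantity $\phi(\rho)$, because $2^*<4$. You need Adams' Morrey--Sobolev inequality, using the hypothesis on all sub-balls. You also need either a Morrey-type iteration quantity (a supremum over sub-balls) or a Fefferman--Phong type trace bound, so that the error is proportional to the quantity being iterated.

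\emph{(iv) Step~3.} The decay $\phi(\rho)\le C\rho^{2\alpha}$ gives only $\int_{B_\rho}|v-\bar v_\rho|^2\le C\rho^{n-2+2\alpha}$. By Campanato's theorem this is H\"older continuity only when $n-2+2\alpha>n$, i.e.\ $\alpha>1$. A contraction factor $\tfrac12$ produces a small $\alpha$. You therefore need either an iteration with contraction factor of order $\theta^{3}$, or a further Morrey-space bootstrap, before Schauder theory can be invoked.
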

This is a fixed-scale consequence of \cite[Theorem~A]{LY2022}, originating
in Tian--Xin~\cite{TX1999}. To specify the reduction, take any
$x\in B_{1/2}$ and apply Theorem~A on $B_1(x)\subset B_2$. Its centers
in $B_{1/2}(x)$ lie in $B_1$, so \eqref{eq:vorticity-condition} supplies its
smallness condition with radius threshold $1/2$. The translated $L^2$ norm
is bounded by $M_0$. The theorem gives a fixed positive radius, depending
only on $n,M_0$, at which $|\nabla v(x)|$ is bounded. Take the supremum over
$x$. Only the homogeneous-force case of that theorem is required here.

\begin{corollary}\label{cor:C1}
Every weighted fixed point satisfies
\begin{equation}\label{eq:uniform-C1}
 \|u\|_\infty+\|\nabla u\|_\infty\le C_f.
\end{equation}
\end{corollary}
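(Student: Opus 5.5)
The plan is to apply Lemma~\ref{input:Bernoulli} on every unit ball $B_1(z)$, $z\in\Rn$, to the fixed point $(u,p)$ with force $\tau f$, and then take the supremum over $z$. The conclusion of that lemma is a bound on $B_{1/2}(z)$ depending only on $n$, the constant $C_0$ in \eqref{eq:local-input}, and a lower bound for $r-n/2$. It therefore suffices to verify \eqref{eq:local-input} with $C_0=C_f$, uniformly in $z$ and in $\tau\in[0,1]$.

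\textbf{Choice of exponent.} For the Bernoulli term we use Proposition~\ref{prop:closure}, which gives $\|\theta_+\|_\infty\le C_f$. We then take $r=n$, so that $r-n/2=n/2$ is a fixed positive number. This yields
\[
 \|(p+|u|^2/2)_+\|_{L^n(B_1(z))}\le|B_1|^{1/n}\|\theta_+\|_\infty\le C_f .
\]

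\textbf{The remaining terms.} The force term is immediate: $\|\tau f\|_{L^\infty}\le\|f\|_\infty$. For the velocity, the energy bound \eqref{eq:basic-energy} gives $\|\nabla u\|_{L^2(B_1(z))}\le C_f$, and $\|u\|_{L^2(B_1(z))}\le C_n\|u\|_{2^*}\le C_f$ by H\"older. Alternatively, the Morrey bound $M\le C_f$ from Proposition~\ref{prop:closure} gives $\int_{B_1(z)}|u|^2\le M$ directly. For the pressure, \eqref{eq:basic-energy} gives $\|p\|_{n/(n-2)}+\|\nabla p\|_{n/(n-1)}\le C_f$. Since $n/(n-2)>n/(n-1)$, H\"older on the unit ball yields $\|p\|_{L^{n/(n-1)}(B_1(z))}\le C_f$, and hence $\|p\|_{W^{1,n/(n-1)}(B_1(z))}\le C_f$. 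The regularity hypothesis of the lemma is supplied by Lemma~\ref{lem:initial}, because every weighted fixed point is a regular solution of \eqref{eq:homotopy}.

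\textbf{Conclusion and main obstacle.} Lemma~\ref{input:Bernoulli} now gives $\|u\|_{L^\infty(B_{1/2}(z))}+\|\nabla u\|_{L^\infty(B_{1/2}(z))}\le C_f$, with a constant independent of $z$ and $\tau$. Covering $\Rn$ by the balls $B_{1/2}(z)$ proves \eqref{eq:uniform-C1}. The only delicate step is the Bernoulli integrability with some $r>n/2$ and a data-only constant. The earlier argument failed exactly here for $n\ge16$, and here it is supplied by the uniform sup bound of Proposition~\ref{prop:closure}. Everything else is a direct check of the hypotheses. We also need to make sure the constants coming from \eqref{eq:basic-energy} and Proposition~\ref{prop:closure} depend only on $n,R_0,\|f\|_\infty$, and not on the individual fixed point; both results are stated with such data constants $C_f$. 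Lemma~\ref{input:vorticity} is not needed for this corollary; it is presumably reserved for the later decay and weighted bootstrap.
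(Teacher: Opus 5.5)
Your proposal is correct and follows the paper's proof essentially verbatim. You apply Lemma~\ref{input:Bernoulli} with $r=n$ on every unit ball, control the local $W^{1,2}$ velocity norm and the $W^{1,n/(n-1)}$ pressure norm through \eqref{eq:basic-energy} and H\"older, bound the force by $\|f\|_\infty$ and the local $L^n$ norm of $\theta_+$ by \eqref{eq:closure-bound}, and then cover $\Rn$ by the concentric half-balls.
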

\begin{proof}
Apply Lemma~\ref{input:Bernoulli} with $r=n$ on every unit ball.
The energy bound gives the local $W^{1,2}$ velocity norm, since
$\|u\|_{L^2(B_1(z))}\le |B_1|^{1/n}\|u\|_{2^*}$. Because $\frac n{n-2}>\frac n{n-1}$,
\eqref{eq:basic-energy} controls the local $W^{1,\frac n{n-1}}$ pressure norm.
The force norm is bounded by $\|f\|_\infty$, and
\eqref{eq:closure-bound} bounds the $L^n$ norm of $\theta_+$ on each unit ball.
All these constants are independent of its center. The choice $r=n$ has
the strictly positive gap $r-n/2=n/2$ in every dimension under
consideration. The resulting bounds on the concentric half-balls cover
$\Rn$ and prove \eqref{eq:uniform-C1}.
\end{proof}

\begin{lemma}\label{lem:energy-tail}
For every $\varepsilon>0$ there is $R_\varepsilon$, depending only on
$\varepsilon,n,R_0,\|f\|_\infty$, such that every weighted fixed point
satisfies
\begin{equation}\label{eq:energy-tightness}
 \int_{|x|>R_\varepsilon}|\nabla u|^2<\varepsilon.
\end{equation}
\end{lemma}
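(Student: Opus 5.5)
We will prove uniform energy tightness by a localized energy identity outside a large ball, using the uniform bounds already established. Fix a cutoff $\psi_R$ with $\psi_R=0$ on $B_R$, $\psi_R=1$ outside $B_{2R}$, and $|\nabla\psi_R|\le C/R$, where $R\ge 2R_0$ so that $f\psi_R=0$. Test \eqref{eq:homotopy} with $u\psi_R^2$; the cutoff-at-infinity justification is identical to the one preceding \eqref{eq:energy-equality}, since the individual decay \eqref{eq:individual-decay} kills the outer boundary terms. The force term drops out, leaving
\[
 \int|\nabla u|^2\psi_R^2
 \le \frac CR\int_{A}|\nabla u||u|\psi_R
 +\frac CR\int_{A}\bigl(|u|^3+|p-c||u|\bigr),\qquad A=B_{2R}\setminus B_R,
\]
where the convective term has been rewritten as $\int(|u|^2/2)u\cdot\nabla\psi_R^2$ and $c$ is an arbitrary constant (allowed because $\Div u=0$, $\nabla\psi_R$ supported in $A$).

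Each term on the right must be made small uniformly in the fixed point, depending only on the data. The first term is absorbed: it is at most $\frac12\int|\nabla u|^2\psi_R^2+CR^{-2}\int_A|u|^2$, and by Hölder $R^{-2}\int_A|u|^2\le C\|u\|_{L^{2^*}(A)}^2$. The cubic term is bounded by $R^{-1}\|u\|_\infty\int_A|u|^2\le C\|u\|_\infty\|u\|^2_{L^{2^*}(A)}R^{1}\cdot R^{-2}\cdot R^{0}$; more cleanly, $R^{-1}\int_A|u|^3\le \|u\|_\infty R^{-1}|A|^{2/n}\|u\|^2_{L^{2^*}(A)}\le C_f R\,\|u\|_{L^{2^*}(A)}^2$, which is not small. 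So instead I would use the Morrey/decay structure: \eqref{eq:uniform-C1} and \eqref{eq:closure-bound} give $\int_{B_\rho(z)}|u|^2\le C_f\rho^{n-2}$, hence $R^{-1}\int_A|u|^3\le C_f R^{-1}\|u\|_\infty^{1}\cdot R^{n-2}$, still growing. Thus the key is a \emph{uniform} decay of $\|u\|_{L^{2^*}(|x|>R)}$ together with a uniform pointwise decay of $u$ far away.

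The main obstacle is exactly this uniformity: tails of a family of fixed points need not be uniformly small from the energy bound alone. My plan is to get it from the integral representation \eqref{eq:fixed}: split $G=\tau f-\Div(u\otimes u)$, so $u=U*(\tau f)+\nabla U*(u\otimes u)$. The force part decays like $C_f|x|^{2-n}$ explicitly. For the nonlinear part, use the uniform bounds $\|u\|_\infty\le C_f$ and $\|u\|_{2^*}\le C_f$ from \eqref{eq:basic-energy} and \eqref{eq:uniform-C1}: then $|u|^2\in L^{n/(n-2)}\cap L^\infty$ uniformly, and $|\nabla U|*|u|^2$ is bounded in $L^{s}$ for some $s<2^*$ by Hardy--Littlewood--Sobolev (order one), which combined with the $L^\infty$ bound and a bootstrap yields $u\in L^{q}$ uniformly for some $q<2^*$; interpolating gives a uniformly decaying tail $\|u\|_{L^{2^*}(|x|>R)}$? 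Interpolation alone does not give tail decay, so I would instead iterate the representation with the near/far split of \eqref{eq:convolution}: the contribution of $|u|^2$ from $|y|<R/2$ to $u(x)$, $|x|\ge R$, is $\le C_f R^{1-n}\|u\|_2^2$-type, requiring a uniform $L^2$ bound on $u$, which I would derive from $u\in L^{q}$ uniformly with $q\le 2$ after finitely many bootstrap steps (each step lowers the exponent since $|u|^2$ gains integrability, exponent map $1/q\mapsto 2/q-1/n$, valid since $n\ge16$). With $\|u\|_2\le C_f$, pressure handled by the same representation with $p-c$ replaced by the decaying normalized pressure \eqref{eq:pressure-normalization}, all three error terms tend to zero as $R\to\infty$ at a rate depending only on data, giving $R_\varepsilon$.
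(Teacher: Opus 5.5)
\textbf{Verdict: correct, but by a different route from the paper.} Your argument works once the decisive step is made: a uniform $L^2$ bound for $u$ obtained from the Stokes representation. The middle of your proposal, however, misidentifies the obstruction.

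\textbf{The cubic and pressure errors were never the problem.} Since $3\ge 2^*$ and $n/2\ge 2^*$, the uniform bound \eqref{eq:uniform-C1} together with \eqref{eq:basic-energy} already gives
$R^{-1}\int_A|u|^3\le R^{-1}\|u\|_\infty^{(n-6)/(n-2)}\|u\|_{2^*}^{2^*}\le C_fR^{-1}$
and
$R^{-1}\int_A|p||u|\le R^{-1}\|p\|_{n/(n-2)}\|u\|_{n/2}\le C_fR^{-1}$.
Your estimate through $\|u\|_\infty\int_A|u|^2$ discards this.

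\textbf{The real obstruction is the mixed term.} It is $R^{-2}\int_A|u|^2\le C_n\|u\|_{L^{2^*}(A)}^2$, which is scale invariant and not uniformly small without tightness.

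\textbf{Your fix is sound, and the pointwise-decay detour is unnecessary.} One Hardy--Littlewood--Sobolev step applied to $u_i=\tau U_{ij}*f_j-(\partial_\ell U_{ij})*(u_\ell u_j)$ gives
$\|u\|_2\le C_n\|f\|_{2n/(n+4)}+C_n\|u\|_{4n/(n+2)}^2\le C_f$.
This uses $2^*\le 4n/(n+2)<\infty$ and the uniform bounds on $u$ in $L^{2^*}\cap L^\infty$. It then follows that $R^{-1}\int_A|u||\nabla u|\le R^{-1}\|u\|_2\|\nabla u\|_2\le C_fR^{-1}$, hence $\int_{|x|>2R}|\nabla u|^2\le C_f/R$. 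You should write out these three final estimates explicitly rather than asserting that ``all three error terms tend to zero.''

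\textbf{Comparison with the paper.} The paper never bounds $\|u\|_2$.
\begin{itemize}
\item It uses unit-width annuli $A_i=B_{i+1}\setminus B_i$, so the cutoff gradient is $O(1)$ rather than $O(1/R)$.
\item It bounds the errors in \eqref{eq:shell-energy} by $C_fe_i$ and $C_n(ie_i)^{(n-1)/n}$, where $e_i=\int_{A_i}(|\nabla u|^2+|u|^{2^*}+|p|^{n/(n-2)})$.
\item It combines $\sum_ie_i\le C_f$ with a pigeonhole over $\ell\le i\le\ell+\ell^2$, where $\sum i^{-1}\ge c\log\ell$, to find one good annulus.
\end{itemize}
That argument needs only the energy-level quantities and $\|u\|_\infty$, but it gives only a logarithmic rate for $R_\varepsilon$.

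Your route returns once more to the integral representation \eqref{eq:fixed} to gain a global $L^2$ bound. In exchange it avoids the pigeonhole and gives the polynomial rate $R_\varepsilon\sim C_f/\varepsilon$.

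As an aside, your $B_{2R}\setminus B_R$ cutoff could also be closed without the $L^2$ bound. Pigeonholing $\|u\|_{L^{2^*}(B_{2^{k+1}}\setminus B_{2^k})}^{2^*}$ over dyadic $k$ handles the one scale-invariant term; this is essentially the paper's idea transplanted to your cutoff.
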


\begin{proof}
Let \(i>R_0+1\), and choose \(0\le \eta_i\le 1\) such that
\[
\eta_i=0 \quad\text{on }B_i,\qquad
\eta_i=1 \quad\text{on }\mathbb R^n\setminus B_{i+1},
\qquad
|\nabla\eta_i|\le C.
\]
Thus
\[
\operatorname{supp}\nabla\eta_i
\subset A_i:=B_{i+1}\setminus B_i.
\]
For every integer \(i>R_0+1\), define
\[
e_i
:=
\int_{A_i}
\left(
|\nabla u|^2+|u|^{2^*}+|p|^{\frac n{n-2}}
\right)\,dx.
\]
Since the annuli \(A_i\) are pairwise disjoint up to null sets,
the estimates in \eqref{eq:basic-energy} imply
\[
\sum_{\substack{i\in\mathbb N\\ i>R_0+1}} e_i
\le
\|\nabla u\|_2^2
+\|u\|_{2^*}^{2^*}
+\|p\|_{\frac n{n-2}}^{\frac n{n-2}}
\le C_f.
\]
Since \(\eta_i\) is not compactly supported, we first introduce
\(\chi_R\in C_c^\infty(\mathbb R^n)\), \(R>2(i+1)\), satisfying
\[
0\le \chi_R\le 1,\qquad
\chi_R=1 \ \text{on }B_R,\qquad
\chi_R=0 \ \text{outside }B_{2R},\qquad
|\nabla\chi_R|\le \frac{C}{R}.
\]
We test \eqref{eq:homotopy} by \(u\eta_i\chi_R\). Since
\(\operatorname{supp}f\subset B_{R_0}\subset B_i\) and
\(\eta_i=0\) on \(B_i\), the force term vanishes:
\[
\tau\int_{\mathbb R^n} f\cdot u\,\eta_i\chi_R=0.
\]

For the diffusion term, integration by parts gives
\[
\begin{aligned}
\int_{\mathbb R^n}(-\Delta u)\cdot u\eta_i\chi_R
=
\int_{\mathbb R^n}\eta_i\chi_R|\nabla u|^2
+\int_{\mathbb R^n}
\chi_R\,\partial_j u_k\,u_k\,\partial_j\eta_i
+\int_{\mathbb R^n}
\eta_i\,\partial_j u_k\,u_k\,\partial_j\chi_R.
\end{aligned}
\]
Thanks to \(\operatorname{div}u=0\), we have
\[
\begin{aligned}
\int_{\mathbb R^n}(u\cdot\nabla)u\cdot u\,\eta_i\chi_R
=
-\frac12\int_{\mathbb R^n}
|u|^2\chi_R\,u\cdot\nabla\eta_i
-\frac12\int_{\mathbb R^n}
|u|^2\eta_i\,u\cdot\nabla\chi_R,
\end{aligned}
\]
and
\[
\begin{aligned}
\int_{\mathbb R^n}\nabla p\cdot u\eta_i\chi_R
=
-\int_{\mathbb R^n}
p\chi_R\,u\cdot\nabla\eta_i
-\int_{\mathbb R^n}
p\eta_i\,u\cdot\nabla\chi_R.
\end{aligned}
\]
Consequently,
\begin{align}
    \int_{\mathbb R^n}\eta_i\chi_R|\nabla u|^2
&\nonumber\le
C\int_{A_i}
\bigl(
|u||\nabla u|+|u|^3+|p||u|
\bigr)\\
&\quad
+\frac{C}{R}
\int_{B_{2R}\setminus B_R}
\bigl(
|u||\nabla u|+|u|^3+|p||u|
\bigr). \label{eq:energy-cutoff-two}
\end{align}
It remains to remove the expanding cutoff. Since \(u\) is a weighted fixed point, we have \(u\in X\) and
\[
u_i=U_{ij}*\bigl(\tau f_j-(u\cdot\nabla)u_j\bigr).
\]
Hence Lemma~\ref{lem:initial} applies. By
\eqref{eq:individual-decay}, on \(B_{2R}\setminus B_R\), we have
\[
|u|\le C_uR^{2-n},
\qquad
|\nabla u|+|p|\le C_uR^{1-n}.
\]
Then, using \(|B_{2R}\setminus B_R|\le C_nR^n\), we obtain
\[
\frac1R
\int_{B_{2R}\setminus B_R}|u||\nabla u|
\le C_uR^{2-n},
\qquad
\frac1R
\int_{B_{2R}\setminus B_R}|u|^3
\le C_uR^{5-2n},
\]
and
\[
\frac1R
\int_{B_{2R}\setminus B_R}|p||u|
\le C_uR^{2-n}.
\]
All three quantities tend to zero as \(R\to\infty\). Notice that
the constant \(C_u\) may depend on the individual fixed point; this
causes no difficulty here, since the decay is used only to justify the
limit \(R\to\infty\). Letting \(R\to\infty\) in \eqref{eq:energy-cutoff-two} and using \(\eta_i=1\) on \(\mathbb R^n\setminus B_{i+1}\), we obtain
\begin{equation}\label{eq:shell-energy}
\int_{|x|>i+1}|\nabla u|^2
\le
C\int_{A_i}
\bigl(
|u||\nabla u|+|u|^3+|p||u|
\bigr).
\end{equation}

The following estimates do not use solution-dependent constants.
Since $|A_i|\le C_ni^{n-1}$ and
$1/2^*+1/2+1/n=1$,
\begin{equation}\label{eq:shell-mixed}
 \int_{A_i}|u||\nabla u|
 \le C_ni^{(n-1)/n}
 \left(\int_{A_i}|u|^{2^*}\right)^{(n-2)/(2n)}
 \left(\int_{A_i}|\nabla u|^2\right)^{1/2}
 \le C_n(ie_i)^{(n-1)/n}.
\end{equation}
As $n\ge16$, the number $3-2^*=(n-6)/(n-2)$ is nonnegative. Thus
\[
 \int_{A_i}|u|^3
 \le \|u\|_\infty^{(n-6)/(n-2)}\int_{A_i}|u|^{2^*}
 \le C_fe_i.
\]

For the pressure term, the exponents $n/(n-2)$ and $n/2$ are
conjugate. Since $n/2\ge2^*$, interpolation on the same shell gives
\begin{equation*}
 \|u\|_{L^{n/2}(A_i)}
 \le\|u\|_\infty^{1-4/(n-2)}
       \|u\|_{L^{2^*}(A_i)}^{4/(n-2)}
 =\|u\|_\infty^{(n-6)/(n-2)}
       \left(\int_{A_i}|u|^{2^*}\right)^{2/n}.
\end{equation*}
Therefore, by \eqref{eq:uniform-C1},
\begin{equation*}
 \int_{A_i}|p||u|
 \le \|u\|_\infty^{(n-6)/(n-2)}
 \left(\int_{A_i}|p|^{n/(n-2)}\right)^{(n-2)/n}
 \left(\int_{A_i}|u|^{2^*}\right)^{2/n}\\
 \le C_fe_i.
\end{equation*}
The powers of the two integrals in the last line add to one.

For a large integer $\ell$, let
$S_\ell=\sum_{i=\ell}^{\ell+\ell^2}i^{-1}$. Then
$S_\ell\ge c\log\ell$, and
\[
 \left(\min_{\ell\le i\le\ell+\ell^2}ie_i\right)S_\ell
 \le\sum_{i=\ell}^{\ell+\ell^2}e_i\le C_f.
\]
Hence some index in this interval satisfies $ie_i\le C_f/\log\ell$.
Using that index in \eqref{eq:shell-energy}, and noting that
$i+1\le\ell+\ell^2+1$, gives
\begin{equation*}
 \int_{|x|>\ell+\ell^2+1}|\nabla u|^2
 \le C_f\bigl((\log\ell)^{-(n-1)/n}+(\log\ell)^{-1}\bigr).
\end{equation*}
This tends to zero uniformly over the fixed points and over $\tau$, proving
\eqref{eq:energy-tightness}.
\end{proof}

\subsection{A finite Stokes bootstrap for every fixed dimension}

\begin{lemma}\label{lem:interior-Stokes}
Let $n\ge3$, $1<q<\infty$, and $0<a<b$. Suppose that
$v,G\in L^q(B_b;\Rn)$ and $\pi$ is a distribution satisfying
\[
 -\Delta v+\nabla\pi=G,\qquad\Div v=0\quad\text{in }B_b.
\]
Then
\begin{equation}\label{eq:interior-Stokes}
 \|v\|_{W^{2,q}(B_a)}
 \le C_{n,q,a,b}\bigl(\|G\|_{L^q(B_b)}+\|v\|_{L^q(B_b)}\bigr).
\end{equation}
\end{lemma}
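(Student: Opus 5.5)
The key observation is that the pressure can be eliminated, since $\Div v=0$ makes the vorticity satisfy an elliptic system with no pressure term. Write $\Omega=\Omega(v)=(\partial_iv_j-\partial_jv_i)_{i,j}$. Applying the curl to the Stokes equation in distributions gives $-\Delta\Omega_{ij}=\partial_iG_j-\partial_jG_i$ in $B_b$. The divergence constraint then yields
\[
 \Delta v_j=\partial_i\partial_iv_j=\partial_i(\partial_iv_j-\partial_jv_i)=\partial_i\Omega_{ij},
\]
because $\partial_j\Div v=0$. Hence $v$ solves $-\Delta v_j=-\partial_i\Omega_{ij}$, and $\Omega$ solves a Poisson equation with right side in divergence form of $L^q$ data. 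The strategy is therefore to estimate $\Omega$ in $L^q$ first, then $v$ in $W^{1,q}$ and $W^{2,q}$, using only interior Calder\'on--Zygmund estimates for the Laplacian on a finite chain of nested balls $B_a\subset B_{a_1}\subset\dots\subset B_b$.

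\textbf{Step 1: first derivatives.} Fix a cutoff $\eta\in C_c^\infty(B_b)$ equal to one on an intermediate ball. Write $\eta v=(-\Delta)^{-1}(-\Delta(\eta v))$, expand $-\Delta(\eta v)=-\eta\Delta v-2\nabla\eta\cdot\nabla v-v\Delta\eta$, and substitute $-\eta\Delta v=\eta G-\eta\nabla\pi$. This would reintroduce the pressure, which is what the argument is meant to avoid. I would therefore use the vorticity route instead. Since $G\in L^q$ only, the identity for $\Omega$ has right side $\partial G$ in $W^{-1,q}$. The natural way to proceed is to gain derivatives through the velocity equation $-\Delta v=-\partial_i\Omega_{ij}$, viewed as $-\Delta v=-\Div\Omega$. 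Localizing and applying $\nabla(-\Delta)^{-1}\Div$, an order-zero operator bounded on $L^q$, would give $\|\nabla v\|_{L^q}$ in terms of $\|\Omega\|_{L^q}$ plus lower-order cutoff terms involving $v$. But $\Omega$ is itself first-order in $v$, so this is circular at the $W^{1,q}$ level. The loop should instead be closed at the level of $W^{-1,q}$: the equation $-\Delta\Omega=\operatorname{curl}G$, localized, gives $\|\eta\Omega\|_{L^q}\lesssim\|G\|_q+\|\Omega\|_{W^{-1,q}(B_b)}+\dots$, and $\|\Omega\|_{W^{-1,q}}\lesssim\|v\|_{L^q}$. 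The commutator terms from the cutoff have one fewer derivative, so they are controlled by $\|v\|_{L^q}$ on a larger ball after a standard interpolation/absorption. The output of this step is $\|\nabla v\|_{L^q(B_{a_1})}\le C(\|G\|_{L^q(B_b)}+\|v\|_{L^q(B_b)})$.

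\textbf{Step 2: second derivatives.} Differentiate once more: $\partial_k\Omega$ satisfies $-\Delta\partial_k\Omega=\partial_k\operatorname{curl}G$. A cleaner route is $-\Delta v=\operatorname{curl}^*\Omega$, with $\Omega$ satisfying an equation whose localized version gives $\nabla\Omega\in L^q$. Concretely, $\eta\Omega=(-\Delta)^{-1}[\eta\operatorname{curl}G+\text{terms with }\nabla\eta\,\Omega,\ \Omega\Delta\eta]$. The Riesz transforms then bound $\nabla(\eta\Omega)$ in $L^q$ by $\|G\|_q+\|\Omega\|_{L^q}$, where $\eta\operatorname{curl}G$ is rewritten as $\operatorname{curl}(\eta G)$ minus zero-order terms in $G$. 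Step 1 controls $\|\Omega\|_{L^q}$. Finally, $\Delta v_j=\partial_i\Omega_{ij}$ gives $\Delta v\in L^q$, and interior Calder\'on--Zygmund for the Laplacian yields $\|v\|_{W^{2,q}(B_a)}$.

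\textbf{Main obstacle.} The main obstacle is making Step 1 rigorous without any a priori regularity of $v$ beyond $L^q$, or of $\pi$ beyond being a distribution. I would first mollify: $v_\epsilon=v*\rho_\epsilon$ and $G_\epsilon$ satisfy the same system on a slightly smaller ball, with smooth $\pi_\epsilon$. I would then prove the estimate for the smooth functions with constants independent of $\epsilon$ and pass to the limit by weak compactness in $W^{2,q}$. For the smooth case the chain of balls has only finitely many steps, so the constants depend only on $n,q,a,b$.
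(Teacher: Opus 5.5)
Your proof is correct, but it eliminates the pressure differently from the paper. You take the curl, so that $-\Delta\Omega=\operatorname{curl}G$ and $\Delta v_j=\partial_i\Omega_{ij}$ hold in $\mathcal D'(B_b)$. You then run two localized Newtonian-potential/Calder\'on--Zygmund steps for the scalar Laplacian: first $\eta\Omega\in L^q$, then $\nabla\Omega\in L^q$, hence $\Delta v\in L^q$ on a smaller ball.

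The paper instead subtracts the Stokes potential $V=U*(\chi G)$, $\Pi=P*(\chi G)$. This absorbs the entire inhomogeneity through one Calder\'on--Zygmund bound for $\nabla^2U$. The difference $w=v-V$ then solves the homogeneous Stokes system on $B_c$: the divergence makes $\pi-\Pi$ harmonic, applying $\Delta$ gives $\Delta^2w=0$, and a single interior biharmonic estimate $\|w\|_{W^{2,q}(B_a)}\le C\|w\|_{L^q(B_c)}$ concludes. The paper's route is shorter and reuses the Stokes kernels already needed for the map $T$, at the cost of quoting interior estimates for biharmonic distributions. Yours uses only the Laplacian but needs two cutoff steps.

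In those cutoff steps no interpolation or absorption is actually required. Writing $\Omega=\operatorname{curl}v$, every derivative in the commutator terms can be moved outside onto products of derivatives of $\eta$ with $v$. Their Newtonian potentials are then operators of order at most zero applied to $v$, bounded by $\|v\|_{L^q(B_b)}$ on bounded sets. In Step~2 you should likewise write $-2\partial_k\eta\,\partial_k\Omega_{ij}=-2\partial_k(\Omega_{ij}\partial_k\eta)+2\Omega_{ij}\Delta\eta$ before applying $\nabla(-\Delta)^{-1}$. The mollification is also dispensable, since $\eta\Omega=\Gamma*(-\Delta(\eta\Omega))$ holds for every compactly supported distribution.
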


\begin{proof}
Choose $a<c<b$ and $\chi\in C_c^\infty(B_b)$ equal to one on $B_c$.
Extend $\chi G$ by zero and set
$V_i=U_{ij}*(\chi G_j)$, $\Pi=P_j*(\chi G_j)$.
For $x,y\in B_b$, the difference $x-y$ lies in $B_{2b}$. Local
integrability of $U,\nabla U$ and Young's inequality give
\[
 \|V\|_{L^q(B_b)}+\|\nabla V\|_{L^q(B_b)}
 \le C_{n,b}\|G\|_{L^q(B_b)}.
\]
The second derivatives of $U$ define order-zero Calder\'on--Zygmund
operators, with any distributional local multiples of the identity included.
Therefore
\[
 \|\nabla^2V\|_{L^q(\Rn)}\le C_{n,q}\|G\|_{L^q(B_b)}.
\]

On $B_c$, $w=v-V$ and $\rho=\pi-\Pi$ solve the homogeneous Stokes system.
Taking divergence gives $\Delta\rho=0$; applying $\Delta$ to the velocity
equation then gives $\Delta^2w=0$. Interior estimates for this
constant-coefficient elliptic equation yield
\[
 \|w\|_{W^{2,q}(B_a)}\le C_{n,q,a,c}\|w\|_{L^q(B_c)}.
\]
Combine these estimates. This is the standard potential-and-biharmonic
proof of the pressure-free local estimate, also consistent with the
Stokes estimates used in~\cite{LY2022}.
\end{proof}

\begin{proposition}\label{prop:uniform-decay}
Every weighted fixed point satisfies \eqref{eq:main-decay} with a constant
dependent only on the prescribed data. In particular,
$\|u\|_X\le C_f$.
\end{proposition}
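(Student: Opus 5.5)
We would prove Proposition~\ref{prop:uniform-decay} in three stages: a uniform smallness of the velocity far out, a finite Stokes bootstrap on unit balls, and a representation-formula bootstrap for the decay. The input is the uniform $C^1$ bound \eqref{eq:uniform-C1}, the energy bounds \eqref{eq:basic-energy}, and the tightness Lemma~\ref{lem:energy-tail}.

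\textbf{Stage 1: uniform far-field smallness.} Fix $\varepsilon>0$ and take $R_\varepsilon$ from Lemma~\ref{lem:energy-tail}. Sobolev applied to $u$ times a cutoff vanishing on $B_{R_\varepsilon}$ bounds $\|u\|_{L^{2^*}(|x|>2R_\varepsilon)}$ by $C_n(\varepsilon^{1/2}+\text{tail of the annulus term})$. The annulus term is controlled through $\|u\|_{2^*}$ restricted to $A=B_{2R_\varepsilon}\setminus B_{R_\varepsilon}$, and this is again small by the same pigeonhole-on-shells device used in the proof of Lemma~\ref{lem:energy-tail}. On each unit ball $B_1(z)$ with $|z|$ large, interpolating between $L^{2^*}$ smallness and the uniform $\|u\|_\infty+\|\nabla u\|_\infty\le C_f$ makes $\|u\|_{L^\infty(B_1(z))}$ uniformly small.

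\textbf{Stage 2: local Stokes bootstrap.} On $B_2(z)$ with $|z|$ large the force vanishes. We write $-\Delta u+\nabla p=-(u\cdot\nabla)u$ and apply Lemma~\ref{lem:interior-Stokes} finitely many times, shrinking radii, to get $\|u\|_{W^{2,q}(B_1(z))}\le C(\|u\|_{L^q(B_2(z))})$ with small constants carried along. This gives $|u|+|\nabla u|\le \delta(|z|)$ with $\delta\to0$ uniformly over fixed points and $\tau$.

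\textbf{Stage 3: decay via the integral equation.} This is the main step and the expected obstacle. We use $u=U*G$ with $G=\tau f-(u\cdot\nabla)u$ and set up a weighted induction. Suppose $|u|\le A(1+|x|)^{-\alpha}$ and $|\nabla u|\le A(1+|x|)^{-\alpha-1}$ with $\alpha\ge0$. Then $|G|\le C_f\boldsymbol 1_{B_{R_0}}+A^2(1+|y|)^{-2\alpha-1}$, and \eqref{eq:convolution} with $a=2$ and $a=1$ improves the exponent to $\min\{2\alpha-1,n-2\}$. This is too weak for small $\alpha$, so the start must exploit smallness rather than the exponent gain.

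For the start, take $\rho$ large so that $\sup_{|x|>\rho}(|u|+|\nabla u|)\le\delta$. Split $G$ into the compact part on $B_\rho$, which is bounded by $C_f\rho^n$ in $L^1$ and hence contributes $C_f(\rho)|x|^{2-n}$ and $C_f(\rho)|x|^{1-n}$, and the exterior part. For the exterior part we use the nonlinear absorption
\[
 \sup_{|x|>\rho}\bigl(|x|^{\beta}|u|+|x|^{\beta+1}|\nabla u|\bigr)\le C_f(\rho)+C\delta\,\sup_{|x|>\rho}\bigl(|x|^{\beta}|u|+|x|^{\beta+1}|\nabla u|\bigr).
\]
This holds with $\beta$ slightly below $n-2$ by \eqref{eq:convolution}, since $\beta<2\beta+1$ and $2\beta+1>n$ once $\beta>(n-1)/2$.

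The weighted sup is finite individually because $u\in X$, so absorption is legitimate for each fixed point. We then choose $\delta$ small, so that $C\delta<1/2$, uniformly in the data. One more pass through \eqref{eq:convolution} with $\beta=2n-5>n$ then yields the sharp rates $(1+|x|)^{2-n}$ for $u$ and $(1+|x|)^{1-n}$ for $\nabla u$.

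The pressure bound $|p|\le C_f(1+|x|)^{1-n}$ follows from $p=P*G$ by the same convolution estimate. Finally, the $X$-norm bound $\|u\|_X\le C_f$ follows trivially from these rates, since $n-2>n-3$.
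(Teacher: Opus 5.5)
The gap is the absorption inequality in Stage~3. On $\{|y|>\rho\}$ you only have $|G(y)|\le|u(y)|\,|\nabla u(y)|$. With $W_\beta=\sup_{|x|>\rho}\bigl(|x|^\beta|u|+|x|^{\beta+1}|\nabla u|\bigr)$, this product can be bounded in two ways: by $\delta W_\beta|y|^{-\beta-1}$ (one factor estimated by the smallness, the other by the weight), or by $W_\beta^2|y|^{-2\beta-1}$ (both by the weight). Your display takes the factor $\delta$ from the first bound and the decay $2\beta+1>n$ from the second, and you cannot have both.

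\begin{itemize}
\item With the first bound, since $2<\beta+1<n$, \eqref{eq:convolution} returns only $(1+|x|)^{1-\beta}$ for $u$ and $(1+|x|)^{-\beta}$ for $\nabla u$. This is one power short of the weights, so nothing is absorbed.
\item With the second bound you get $W_\beta\le C_f(\rho)+C(\rho)W_\beta^2$. This is superlinear, and the mere individual finiteness of $W_\beta$ (from $u\in X$) cannot close it: a quadratic inequality only confines $W_\beta$ to a small or a large range, and you have no way to select the small one.
\end{itemize}

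The root cause is that $\sup_{|x|>\rho}(|u|+|\nabla u|)\le\delta$ is a unit-scale, hence scale-subcritical, smallness without any rate in $|x|$. A linear absorption needs a scale-invariant small quantity, such as $\sup_{|x|>\rho}|x|\,|u(x)|\le\delta$ or $\sup_{|x|>\rho}|x|^2|\nabla u(x)|\le\delta$. Then $|G(y)|\le\delta W_\beta|y|^{-\beta-2}$, and \eqref{eq:convolution} returns exactly $|x|^{-\beta}$ and $|x|^{-\beta-1}$ for $0<\beta<n-2$. Tightness plus a Stokes bootstrap on unit balls cannot supply such a rate.

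What is missing is regularity at the scale $R\sim|x_0|$, and this is what the paper provides.
\begin{itemize}
\item It rescales $v(y)=Ru(x_0+Ry)$ with $|x_0|=4R$ and notes $\|v\|_{L^2(B_2)}^2\le C_fR^{4-n}$ by energy.
\item It verifies the vorticity smallness \eqref{eq:vorticity-condition} at every center and radius: small physical radii via \eqref{eq:uniform-C1}, large ones via Lemma~\ref{lem:energy-tail}.
\item Lemma~\ref{input:vorticity} then gives $\|\nabla v\|_{L^\infty(B_{1/2})}\le C_f$, i.e.\ the scale-invariant bound $|\nabla u|\le C_fR^{-2}$ near $x_0$.
\end{itemize}
Your Stage~2 mechanism is the right one: a Stokes bootstrap (Lemma~\ref{lem:interior-Stokes}) that is linear because the gradient is bounded. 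The paper runs it in the rescaled variables through finitely many Sobolev steps, preserving the factor $R^{(4-n)/2}$. This gives $|u(x)|\le C_f(1+|x|)^{(2-n)/2}$ and $|\nabla u(x)|\le C_f(1+|x|)^{-n/2}$.

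Since $(n-2)/2>1$, this is already better than critical, so $|G|\le C_f(1+|x|)^{1-n}$. Two plain passes through \eqref{eq:convolution}, first with $\beta=n-1$ and then with $\beta=2n-5>n$, give \eqref{eq:main-decay}, and no absorption is needed. Your final pressure and $\|u\|_X$ steps are fine once these rates are available.
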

\begin{proof}
\proofstep{I}{A fixed vorticity threshold at large distance}
Let $R\ge1$, $|x_0|=4R$, and set
\[
 v(y)=Ru(x_0+Ry),\qquad\pi(y)=R^2p(x_0+Ry).
\]
Energy and H\"older give
\begin{equation}\label{eq:rescaled-L2}
 \|v\|_{L^2(B_2)}^2
 =R^{2-n}\int_{B_{2R}(x_0)}|u|^2
 \le C_nR^{4-n}\|u\|_{2^*}^2
 \le C_fR^{4-n}.
\end{equation}
Choose $M_0$ bounding this norm for every $R\ge1$, and choose
$\varepsilon_0$ from Lemma~\ref{input:vorticity} for this fixed
$M_0$. Equation \eqref{eq:uniform-C1} supplies a uniform $r_1\in(0,1)$
such that
\begin{equation}\label{eq:small-physical-balls}
 r^{4-n}\int_{B_r(z)}|\Omega(u)|^2
 \le C_fr^4<\varepsilon_0\quad(0<r<r_1,\ z\in\Rn).
\end{equation}
By Lemma~\ref{lem:energy-tail}, choose
$R_*\ge\max\{1,R_0+1\}$, depending only on the data, so that
\begin{equation}\label{eq:far-vorticity}
 \int_{|x|>R_*}|\Omega(u)|^2<\varepsilon_0r_1^{n-4}.
\end{equation}
For $R\ge R_*$ the rescaled equation is homogeneous in $B_2$, since
$|x_0+Ry|\ge2R>R_0$ there. For $y\in B_1$ and $0<\rho<1/2$, put
$r=R\rho$, $z=x_0+Ry$. The ball $B_r(z)$ lies outside $B_{R_*}$, and
\begin{equation}\label{eq:vorticity-scaling}
 \rho^{4-n}\int_{B_\rho(y)}|\Omega(v)|^2
 =r^{4-n}\int_{B_r(z)}|\Omega(u)|^2.
\end{equation}
Use \eqref{eq:small-physical-balls} if $r<r_1$ and
\eqref{eq:far-vorticity} if $r\ge r_1$. In the latter case
$r^{4-n}\le r_1^{4-n}$, because $n>4$. Thus the hypotheses of the
vorticity criterion hold at every required center and radius. It follows
that
\begin{equation}\label{eq:rescaled-gradient}
 \|\nabla v\|_{L^\infty(B_{1/2})}\le C_f.
\end{equation}

\proofstep{II}{Retaining the small norm through every Sobolev step}
In the homogeneous equation the Stokes source is
$G=-(v\cdot\nabla)v$. Equations \eqref{eq:interior-Stokes} and
\eqref{eq:rescaled-gradient} imply
\begin{equation}\label{eq:linear-bootstrap}
 \|v\|_{W^{2,q}(B_a)}\le C_{n,q,a,b,f}\|v\|_{L^q(B_b)},
 \qquad 0<a<b\le1/2.
\end{equation}
The constant does not depend on $R$ or $x_0$. This estimate is linear in
the preceding velocity norm, although the original equation is nonlinear.

Set
\begin{equation}\label{eq:bootstrap-indices}
 m=\left\lceil\frac n4\right\rceil-1,\qquad
 a_j=\frac{2n}{n-4j}\quad(0\le j\le m),\qquad
 \rho_j=\frac12-\frac{3j}{8(m+2)}\quad(0\le j\le m+2).
\end{equation}
The sequence starts at $a_0=2$.
The integer $n-4m$ belongs to $\{1,2,3,4\}$, so the last exponent in the
sequence is at least $n/2$. For each $j<m$, the input exponent is less
than $n/2$ and
\[
 \frac1{a_{j+1}}=\frac1{a_j}-\frac2n.
\]
Apply \eqref{eq:linear-bootstrap} from $B_{\rho_j}$ to
$B_{\rho_{j+1}}$ and use Sobolev. This takes the initial $L^2$ bound
successively through the exponents in \eqref{eq:bootstrap-indices}.
More explicitly, induction using \eqref{eq:rescaled-L2} and
\eqref{eq:linear-bootstrap} gives
\begin{equation}\label{eq:bootstrap-induction}
 \|v\|_{L^{a_j}(B_{\rho_j})}
 \le C_{f,n,j}\|v\|_{L^2(B_{1/2})}
 \le C_{f,n,j}R^{(4-n)/2},\qquad 0\le j\le m.
\end{equation}
At each stage the embedding is applied on $B_{\rho_{j+1}}$ after
the Stokes estimate from $B_{\rho_j}$. Thus no additive constant
appears that could destroy the small factor $R^{(4-n)/2}$.

One more application gives $L^{2n}$ on $B_{\rho_{m+1}}$: if the input
exponent is $n/2$, use the critical embedding into the finite target
$2n$; if it is larger, use the stronger embedding. The final application
gives $W^{2,2n}$ on $B_{\rho_{m+2}}=B_{1/8}$, and hence $C^{1,1/2}$.
There are exactly $m+2$ Stokes steps, a finite number for each fixed $n$.
Every step is linear in the previous norm. Thus
\begin{equation}\label{eq:rescaled-C1-small}
 \|v\|_{C^1(B_{1/8})}\le C_fR^{(4-n)/2}.
\end{equation}
For example, when $n=19$ the chain is
\[
 L^2\longrightarrow L^{38/15}\longrightarrow L^{38/11}
 \longrightarrow L^{38/7}\longrightarrow L^{38/3}
 \longrightarrow L^{38}\longrightarrow C^1.
\]
Scaling back at $x_0$, and using \eqref{eq:uniform-C1} on the remaining
bounded region, gives
\begin{equation}\label{eq:weak-decay}
 |u(x)|\le C_f(1+|x|)^{(2-n)/2},\qquad
 |\nabla u(x)|\le C_f(1+|x|)^{-n/2}.
\end{equation}

\proofstep{III}{Two convolution improvements}
The source in \eqref{eq:fixed} now satisfies
$|G(x)|\le C_f(1+|x|)^{1-n}$. Apply \eqref{eq:convolution} with
$\beta=n-1$ and $a=2,1$ to the Stokes representation and its first
velocity derivatives. The result is
\[
 |u(x)|\le C_f(1+|x|)^{3-n},\qquad
 |\nabla u(x)|\le C_f(1+|x|)^{2-n}.
\]
The source improves to $|G(x)|\le C_f(1+|x|)^{5-2n}$.
Since $2n-5>n$, a second application, now also to $P*G$, gives
\eqref{eq:main-decay}. 
\end{proof}

\subsection{Compactness and Leray--Schauder continuation}
\begin{proof}[Proof of Theorem~\ref{thm:whole}]
\proofstep{I}{Define the map on a complete space.}
Use the real Banach space $X$ in \eqref{eq:X}. To recall completeness,
an $X$-Cauchy sequence and its first derivatives converge uniformly on
each compact set. Their limits define a $C^1$ field, the divergence
constraint passes to the limit, and taking limits in the weighted
Cauchy inequality proves convergence in $X$.
For $(\tau,v)\in[0,1]\times X$, define
\begin{equation}\label{eq:compact-map}
 T(\tau,v)=U*(\tau f-(v\cdot\nabla)v).
\end{equation}
The kernels are those of \eqref{eq:Stokes-kernels}. For
$\|v\|_X\le L$, the source obeys
\begin{equation}\label{eq:map-source-bound}
 |\tau f(x)-(v\cdot\nabla)v(x)|
 \le C_{f,L}(1+|x|)^{5-2n}.
\end{equation}
Since $2n-5>n$, \eqref{eq:convolution}, with $a=2$ and $a=1$,
gives
\begin{equation}\label{eq:map-strong-tail}
 |T(\tau,v)(x)|+(1+|x|)|\nabla T(\tau,v)(x)|
 \le C_{f,L}(1+|x|)^{2-n}.
\end{equation}
The locally integrable kernels and local Stokes estimates give a
$C^1$ output, and $\partial_iU_{ij}=0$ makes it divergence free.
Thus $T$ takes values in $X$.

\proofstep{II}{Verify joint continuity.}
For $v,w\in X$, the same convolution estimate gives the bilinear bound
\begin{equation}\label{eq:map-bilinear}
 \|U*((v\cdot\nabla)w)\|_X\le C_n\|v\|_X\|w\|_X.
\end{equation}
Indeed, the source on the left is at most
$\|v\|_X\|w\|_X(1+|x|)^{5-2n}$.
Using
$(v\cdot\nabla)v-(w\cdot\nabla)w
=((v-w)\cdot\nabla)v+(w\cdot\nabla)(v-w)$, we obtain
\begin{equation}\label{eq:map-continuity}
 \begin{split}
 \|T(\tau,v)-T(\sigma,w)\|_X
 &\le C_f|\tau-\sigma|\\
 &\quad+C_n(\|v\|_X+\|w\|_X)\|v-w\|_X.
 \end{split}
\end{equation}
This proves continuity in both variables.

\proofstep{III}{Verify compactness in the weighted norm.}
Let $(\tau_j,v_j)$ be any sequence with $\|v_j\|_X\le L$.
By \eqref{eq:map-source-bound} and Lemma~\ref{lem:interior-Stokes},
the fields $T(\tau_j,v_j)$ are bounded in $W^{2,q}(B_R)$ for every
fixed $R$ and finite $q>1$. Choose $q>n$ and use compact embedding
into $C^1$ on smaller balls. A diagonal subsequence converges in
$C^1_{\loc}$. Moreover, \eqref{eq:map-strong-tail} implies
\begin{equation}\label{eq:map-weighted-tail}
 \sup_{|x|>R}\left\{
 (1+|x|)^{n-3}|T(\tau_j,v_j)(x)|
 +(1+|x|)^{n-2}|\nabla T(\tau_j,v_j)(x)|\right\}
 \le\frac{C_{f,L}}{1+R}.
\end{equation}
Given $\varepsilon>0$, first make this common tail smaller than
$\varepsilon/4$, then use local $C^1$ convergence to make the
weighted difference on $B_R$ smaller than $\varepsilon/2$.
The subsequence is Cauchy in $X$, so completeness gives convergence
in $X$. This verifies collective compactness for all $\tau\in[0,1]$,
not merely local compactness.

\proofstep{IV}{Exclude fixed points on one common boundary.}
If $u=T(\tau,u)$, Lemma~\ref{lem:initial} first makes $(u,p)$ a
regular solution of \eqref{eq:homotopy}, with its normalized pressure
and all scalar tests admissible. Proposition~\ref{prop:closure}
gives \eqref{eq:closure-bound}, Corollary~\ref{cor:C1} gives
\eqref{eq:uniform-C1}, and Proposition~\ref{prop:uniform-decay}
then yields
\begin{equation}\label{eq:LS-apriori}
 \|u\|_X\le M_f\quad
 \text{whenever }u=T(\tau,u),\quad 0\le\tau\le1.
\end{equation}
Here $M_f$ depends only on $n,R_0,\|f\|_\infty$.
Choose $R=M_f+1$. Thus $I-T(\tau,\cdot)$ has no zero on
$\partial B_R^X$ for any parameter. These are conditional estimates
for every possible fixed point; they do not presuppose its existence.

\proofstep{V}{Compute the initial degree.}
At $\tau=0$ the map is quadratic, not identically zero.
Nevertheless, a fixed point satisfies \eqref{eq:energy-equality}
with zero right-hand side, hence $\nabla u=0$. The decay built into
$X$ forces $u=0$. This is the only fixed point of $T(0,\cdot)$.
By \eqref{eq:map-bilinear},
\begin{equation}\label{eq:LS-quadratic}
 T(0,0)=0,\qquad \|T(0,v)\|_X\le C_n\|v\|_X^2.
\end{equation}
Choose $0<\delta<R$ with $C_n\delta<1$.
If $\|v\|_X=\delta$ and $v=\lambda T(0,v)$ for
$0\le\lambda\le1$, then
$\delta\le C_n\delta^2<\delta$, a contradiction.
Normalization and homotopy invariance of degree on this small ball give
\begin{equation}\label{eq:LS-local-degree}
 \deg_{\mathrm{LS}}(I-T(0,\cdot),B_\delta^X,0)
 =\deg_{\mathrm{LS}}(I,B_\delta^X,0)=1.
\end{equation}
Uniqueness of the zero at the initial parameter permits excision from
$B_R^X$ to $B_\delta^X$. Therefore the initial degree on $B_R^X$ is
also one. Uniqueness alone would not justify that degree computation;
the quadratic estimate \eqref{eq:LS-quadratic} is what identifies it.

\proofstep{VI}{Apply continuation and recover the PDE.}
Steps I--V verify all hypotheses of the Leray--Schauder continuation
theorem, Theorem~\ref{thm:LS-continuation} in Appendix~\ref{app:LS}.
In particular,
\begin{equation}\label{eq:LS-final-degree}
 \deg_{\mathrm{LS}}(I-T(1,\cdot),B_R^X,0)
 =\deg_{\mathrm{LS}}(I-T(0,\cdot),B_R^X,0)=1.
\end{equation}
The existence property of degree produces $u=T(1,u)$.
Set $p=P_j*(f_j-u_k\partial_k u_j)$.
Lemma~\ref{lem:initial} yields \eqref{eq:NS}, regularity and
\eqref{eq:pressure-normalization}; Proposition~\ref{prop:uniform-decay}
gives the claimed decay \eqref{eq:main-decay}.
If $f$ is smooth, successive local Stokes estimates and product
estimates imply smoothness of $u,p$. No derivative of $f$ is used
in the bounds of Theorem~\ref{thm:whole}.
\end{proof}

\section{The periodic extension}\label{sec:torus}
The periodic existence theory of Frehse and R\r{u}\v{z}i\v{c}ka~\cite{FR1995a,FR1995b} is the starting point for this extension. We prove the scalar closure for every fixed $n\ge16$ and then apply periodic degree theory. The mean of the positive source and the constant mode of the resolvent are retained throughout.
In this section $\Tn=\Rn/\Z^n$ has unit volume, all integrals without a
domain are over $\Tn$, and $(-\Delta_{\T})^{-1}$ acts on mean-zero
distributions. Let $(u,p)$ first be a regular homotopy solution with
$\int u=\int p=\int f=0$. Constants $C_f$ here depend only on $n$ and
$\|f\|_\infty$. Poincar\'e, the energy identity, and periodic pressure
estimates give
\begin{equation}\label{eq:torus-energy}
 \|u\|_{W^{1,2}}+\|u\|_{2^*}+\|p\|_{\frac n{n-2}}
 +\|\nabla p\|_{\frac n{n-1}}+\|\theta_+\|_{\frac n{n-2}}\le C_f,
 \qquad \int\theta=\frac12\int|u|^2\ge0.
\end{equation}
Indeed $\|\nabla u\|_2^2=\tau\int f\cdot u$, and the zero mean of $u$
lets Poincar\'e control $\|u\|_2$ by $\|\nabla u\|_2$. Periodic Sobolev
then controls $\|u\|_{2^*}$. The source
$\tau f-u\cdot\nabla u$ is bounded in $L^{\frac n{n-1}}$ by energy. The pressure
gradient is an order-zero periodic singular integral of this source;
mean-zero Sobolev gives the pressure in $L^{\frac n{n-2}}$. 

\subsection{Periodic potential and source flux}
Let $\Gamma_0,\Psi_0$ be the mean-zero periodic kernels with
\begin{equation}\label{eq:torus-kernels}
 -\Delta\Gamma_0=\delta_0-1,\qquad -\Delta\Psi_0=\Gamma_0.
\end{equation}
Equivalently, at $k\ne0$ their Fourier coefficients are
$(4\pi^2|k|^2)^{-1}$ and $(4\pi^2|k|^2)^{-2}$, respectively, and the
coefficient at zero is zero. Choose a smooth cutoff $\chi$, supported in a
coordinate ball of radius less than $1/2$, equal to one on $B_{1/4}$, and
nonnegative. The Euclidean kernels in \eqref{eq:GammaPsi} satisfy
\begin{equation}\label{eq:kernel-smooth-remainders}
 \Gamma_0=\chi\Gamma+\gamma_0,\qquad
 \Psi_0=\chi\Psi+\psi_0,
 \qquad\gamma_0,\psi_0\in C^\infty(\Tn).
\end{equation}
For the first formula, apply $-\Delta$ to the difference; its right side
is smooth because the point singularities cancel. For the second formula,
use the first one and the identity $-\Delta\Psi=\Gamma$. Periodic
elliptic regularity proves both assertions. Consequently
\begin{equation}\label{eq:torus-signed-kernel}
 \partial_i\partial_j\Psi_0+\frac12\delta_{ij}\Gamma_0
 =\chi(x)\frac{x_ix_j}{2\omega_n|x|^n}+E_{ij}(x),
 \qquad E_{ij}\in C^\infty(\Tn).
\end{equation}
Coordinates in the singular term refer only to the ball supporting $\chi$.

Let $\delta(x)$ be torus distance to zero. Increasing a fixed constant
$c_0$ makes $\Gamma_+=\Gamma_0+c_0$ positive and gives
\begin{equation}\label{eq:positive-torus-kernel}
 c_n(1+\delta(x)^{2-n})\le\Gamma_+(x)
 \le C_n(1+\delta(x)^{2-n}).
\end{equation}
Set

\begin{equation}\label{eq:torus-KM}
 \begin{split}
 K&=\|\Gamma_+*\theta_+\|_\infty,\\
 M&=\|u\|_2^2+
 \sup_{z\in\Tn,\ 0<r\le1/8}r^{2-n}\int_{B_r(z)}|u|^2.
 \end{split}
\end{equation}
These numbers are finite for each individual regular solution.

\begin{lemma}\label{lem:torus-signed}
Every normalized regular periodic homotopy solution satisfies
\begin{equation}\label{eq:torus-Morrey}
 M\le C_f(1+K).
\end{equation}
Its periodic skew potential
\begin{equation}\label{eq:torus-d}
 d_{ij}=\partial_i(-\Delta_\T)^{-1}u_j
       -\partial_j(-\Delta_\T)^{-1}u_i
\end{equation}
is bounded for each solution and satisfies
$\partial_jd_{ij}=u_i$ and $L_d=-\Delta+u\cdot\nabla$.
\end{lemma}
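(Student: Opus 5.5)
The proof will follow Lemma~\ref{lem:signed} and Lemma~\ref{lem:initial}, with the Euclidean kernels replaced by the periodic ones and the smooth remainders in \eqref{eq:kernel-smooth-remainders} absorbed into $C_f$.

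\textbf{Step 1: the skew potential.} I start with $d$, since it is routine. For a regular solution $u$ is smooth enough and mean zero, so $(-\Delta_\T)^{-1}u_j$ lies in $W^{3,q}(\Tn)$ for all $q$, and hence $d$ is bounded, indeed Lipschitz. The identity $\partial_jd_{ij}=\partial_i(-\Delta_\T)^{-1}\Div u-\Delta_\T(-\Delta_\T)^{-1}u_i=u_i$ uses $\Div u=0$ and the zero mean of $u$. Skew-symmetry then gives $-\Div((I+d)\nabla\phi)=-\Delta\phi+u\cdot\nabla\phi$, exactly as in \eqref{eq:drift-representation}.

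\textbf{Step 2: the periodic signed identity.} Taking the divergence of the equation gives $p=\partial_i\partial_j(-\Delta_\T)^{-1}(u_iu_j)-\tau\partial_i(-\Delta_\T)^{-1}f_i$. This representation has no additive constant because $\int p=0$. It follows that
\[
 \Gamma_0*p=\partial_i\partial_j\Psi_0*(u_iu_j)-\tau\partial_i\Psi_0*f_i .
\]
Adding $\Gamma_0*(|u|^2/2)$ and using \eqref{eq:torus-signed-kernel} yields
\[
 \Gamma_0*\theta(y)=\frac1{2\omega_n}\int\chi(x-y)\frac{|u(x)\cdot(x-y)|^2}{|x-y|^n}\,dx+E_{ij}*(u_iu_j)(y)-\tau\partial_i\Psi_0*f_i(y).
\]
The error term satisfies $|E_{ij}*(u_iu_j)|\le C_n\|u\|_2^2\le C_f$ by \eqref{eq:torus-energy}. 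The force term is bounded by $C_n\|f\|_\infty$, since $\nabla\Psi_0$ is integrable. Next I pass to the positive kernel. Since $\Gamma_+*\theta=\Gamma_0*\theta+c_0\int\theta$ and $\int\theta=\frac12\int|u|^2\le C_f$, I get $\Gamma_+*\theta\le\Gamma_+*\theta_+\le K$ by positivity of $\Gamma_+$. Hence the directional integral $\mathcal R_\chi(y)$ satisfies $0\le\mathcal R_\chi(y)\le C_f(1+K)$.

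\textbf{Step 3: center averaging.} For $r\le1/8$, I repeat \eqref{eq:spherical-average}. When $x\in B_r(z)$ and $y\in B_r(x)$ we have $|x-y|<1/4$, so $\chi=1$ there and the computation is unchanged. This gives $r^{2-n}\int_{B_r(z)}|u|^2\le C_f(1+K)$. The term $\|u\|_2^2$ in $M$ is already bounded by $C_f$. Together these give \eqref{eq:torus-Morrey}.

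\textbf{Main obstacle.} The delicate point is the bookkeeping of zero modes. The mean-zero inverse applied to $\theta$, which has nonzero mean, must be matched with the positive kernel $\Gamma_+$ used in $K$. This is handled by the sign $\int\theta\ge0$ together with the energy bound on $\int|u|^2$. I also need the convolution identities to hold without hidden constants. For that I would check them on Fourier coefficients at $k\neq0$, where they hold by construction, and separately at $k=0$, where all terms involved have zero coefficient.
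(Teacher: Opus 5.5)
Your proposal is correct and follows the paper's proof essentially step by step. Both arguments use the same ingredients:
\begin{itemize}
\item the zero-mean pressure representation, convolved with $\Gamma_0$ via \eqref{eq:torus-signed-kernel}, with the smooth and force remainders bounded by $C_n\|u\|_2^2$ and $C_n\|\nabla\Psi_0\|_1\|f\|_\infty$;
\item the passage to $\Gamma_+$ using $\int\theta=\frac12\int|u|^2\ge0$;
\item center averaging at $r\le 1/8$, where $\chi=1$.
\end{itemize}
The one difference is how you bound $d$: you use periodic elliptic regularity of $(-\Delta_\T)^{-1}u_j$, while the paper uses the local-subtraction argument. Your route is equally valid and simpler.
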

\begin{proof}
The zero-mean pressure has the representation
\[
 p=\partial_i\partial_j(-\Delta_\T)^{-1}
        \left(u_i u_j-\int u_i u_j\right)
      -\tau\partial_j(-\Delta_\T)^{-1}f_j.
\]
The constant removed from $u_i u_j$ is immaterial after the derivatives,
but specifies the inverse unambiguously. The difference between this
pressure and the given one is harmonic on the torus and has zero mean,
so it vanishes. Convolving with $\Gamma_0$ and using
\eqref{eq:torus-signed-kernel} gives
\begin{equation}\label{eq:torus-signed}
 \Gamma_0*\theta(y)
 =\frac1{2\omega_n}\int\chi(x-y)
        \frac{|u(x)\cdot(x-y)|^2}{|x-y|^n}\,dx+\mathcal E(y).
\end{equation}
The smooth velocity remainder is bounded by $C_n\|u\|_2^2$. The force
term is $-\tau\partial_j\Psi_0*f_j$ and is bounded by
$C_n\|f\|_\infty$, since $\partial_j\Psi_0\in L^1(\Tn)$. Thus
$\|\mathcal E\|_\infty\le C_f$.

The positive kernel and \eqref{eq:torus-energy} give the one-sided bound
\[
 \Gamma_0*\theta
 =\Gamma_+*\theta-c_0\int\theta
 \le\Gamma_+*\theta_+\le K.
\]
In particular the nonnegative singular integral in
\eqref{eq:torus-signed} is at most $C_f(1+K)$. For $r\le1/8$, average that
bound over $y\in B_{2r}(z)$ and retain only $x\in B_r(z)$,
$y\in B_r(x)$. On this retained region $\chi(x-y)=1$, and the same
spherical calculation as in \eqref{eq:spherical-average} yields
$r^{2-n}\int_{B_r(z)}|u|^2\le C_f(1+K)$. The additional $\|u\|_2^2$ term in
\eqref{eq:torus-KM} is controlled by energy, proving
\eqref{eq:torus-Morrey}.

Regularity on the compact torus makes $u$ and its first derivatives
bounded. The local-subtraction proof used in Lemma~\ref{lem:initial},
now with the smooth periodic kernel remainders included, shows that
$d\in W^{1,\infty}$ for the individual solution. Since $\int u=0$,
\[
 \partial_jd_{ij}
 =\partial_i(-\Delta_\T)^{-1}(\Div u)
       -\Delta(-\Delta_\T)^{-1}u_i=u_i.
\]
Skew-symmetry then gives the operator identity.
\end{proof}

\begin{lemma}\label{lem:torus-flux}
Let $n\ge3$, $g\in L^{q_*}(\Tn)$, $\bar g=\int_{\Tn}g$, and put
\[
 m_{1,\T}(g)=\sup_{z\in\Tn,\ 0<r\le1/8}
                   r^{1-n}\int_{B_r(z)}|g|.
\]
If $m_{1,\T}(g)<\infty$, define
$F_g=\nabla(-\Delta_\T)^{-1}(g-\bar g)$. Then
\begin{equation}\label{eq:torus-flux-lemma}
\begin{split}
 -\Div F_g&=g-\bar g,\qquad
 \|F_g\|_{L^2(\Tn)}\le C_n\|g\|_{L^{q_*}(\Tn)},\\
 [(F_g)_{\mathrm{per}}]_{\BMO(\Rn)}
 &\le C_n\bigl(m_{1,\T}(g)+\|g\|_{L^1(\Tn)}\bigr).
\end{split}
\end{equation}
The constants concern the fixed unit torus. No coefficient or drift enters
this lemma.
\end{lemma}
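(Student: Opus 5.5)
The plan is to transfer the proof of Lemma~\ref{lem:newton-flux} to the torus, using the kernel decomposition \eqref{eq:kernel-smooth-remainders}. Write $h=g-\bar g$, which has mean zero. Then $F_g=\nabla\Gamma_0*h$, and
\[
 \nabla\Gamma_0=\chi\nabla\Gamma+\Gamma\nabla\chi+\nabla\gamma_0 .
\]
The second and third pieces are smooth and bounded on $\Tn$, since $\nabla\chi$ vanishes near the origin. Hence $F_g=F^{\mathrm{sing}}+F^{\mathrm{sm}}$, where
\[
 F^{\mathrm{sing}}=(\chi\nabla\Gamma)*h,\qquad
 \|F^{\mathrm{sm}}\|_\infty\le C_n\|h\|_{L^1(\Tn)}\le 2C_n\|g\|_{L^1(\Tn)}.
\]
The identity $-\Div F_g=h$ follows from $-\Delta\Gamma_0=\delta_0-1$ together with $\int h=0$. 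As in Lemma~\ref{lem:newton-flux}, I would first verify it for smooth $g$ and then pass to general $g$ by approximation in $L^{q_*}(\Tn)$.

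For the $L^2$ bound, the kernel $\chi\nabla\Gamma$ is compactly supported in a ball of radius below $1/2$ and satisfies $|\chi\nabla\Gamma(x)|\le C_n|x|^{1-n}$. Lifting $|h|$ periodically and cutting it off to a fixed bounded union of cells, I would apply the order-one Hardy--Littlewood--Sobolev inequality on $\Rn$. This gives $\|F^{\mathrm{sing}}\|_{L^2(\Tn)}\le C_n\|h\|_{L^{q_*}(\Tn)}$. The smooth part is bounded in $L^2(\Tn)$ by $C_n\|h\|_{L^1}$. Since the torus has unit volume, $\|h\|_{L^1}\le 2\|g\|_{L^{q_*}}$, which gives the $L^2$ assertion of \eqref{eq:torus-flux-lemma}.

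For the BMO bound, the smooth part contributes at most $2\|F^{\mathrm{sm}}\|_\infty\le C_n\|g\|_{L^1}$, and the constant $\bar g$ contributes nothing beyond $C_n\|g\|_{L^1}$. It therefore suffices to bound the periodic extension of $(\chi\nabla\Gamma)*g$, a convolution on $\Rn$ of the compactly supported kernel $\chi\nabla\Gamma$ with the periodic function $g_{\mathrm{per}}$. I would split by the radius of the ball $B=B_r(z)$.
\begin{itemize}
\item If $r\le1/16$, I would repeat the near/far argument of Lemma~\ref{lem:newton-flux} with dyadic shells truncated at scale about $1$. The near part is bounded by $C_n r^{1-n}\int_{B_{2r}(z)}|g_{\mathrm{per}}|\le C_n m_{1,\T}(g)$, since $2r\le1/8$ and small balls project isometrically to $\Tn$. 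The far part uses $|\nabla(\chi\nabla\Gamma)(x)|\le C_n|x|^{-n}$ for $|x|\le 1$ and a bounded derivative elsewhere. This gives $\sum_j 2^{-j}$ times $\sup r'^{1-n}\int_{B_{r'}}|g|$ over radii $r'=2^{j}r\le1/8$. Shells of radius larger than $1/8$ lie in a bounded number of cells, so they are controlled by $\|g\|_{L^1(\Tn)}$.
\item If $r>1/16$, I would bound the mean oscillation by twice the average of $|F^{\mathrm{sing}}|$. By periodicity, Tonelli, and the integrability of the kernel, this average is at most $C_n$ times the average of $|g_{\mathrm{per}}|$ over the $1$-neighbourhood of $B$, which is at most $C_n\|g\|_{L^1(\Tn)}$.
\end{itemize}

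The main obstacle is the bookkeeping in the small-ball far part. The Morrey quantity $m_{1,\T}$ is only available for radii $r\le1/8$, so the dyadic sum has to be cut at the scale where the lifted balls stop embedding into one fundamental cell. The remaining shells must then be absorbed into the $L^1$ term without introducing a logarithmic loss in $r$. This is where the derivative bound on the truncated kernel, with its $2^{-j}$ gain, has to be used carefully.
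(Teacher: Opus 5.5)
Your proposal is correct, and for the BMO bound it follows the paper's argument. Both use the same near/far split on small balls, apply $m_{1,\T}(g)$ on dyadic shells only while $2^{j+1}r\le 1/8$, absorb the boundedly many remaining shells into $\|g\|_{L^1(\Tn)}$ without logarithmic loss, and handle large balls by averaging over periodic cells. The only real difference is the $L^2$ estimate: the paper tests $-\Delta a=g-\bar g$ with $a$ and uses the mean-zero periodic Sobolev inequality, which also gives $-\Div F_g=g-\bar g$ at once, while your Hardy--Littlewood--Sobolev argument on the lifted, cut-off source, with the smooth kernel piece handled separately, works equally well.
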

\begin{proof}
Let $a$ be the mean-zero energy solution of
$-\Delta a=g-\bar g$. Testing by $a$ and using the mean-zero periodic
Sobolev inequality gives
\[
 \|\nabla a\|_2^2=\int ga
 \le\|g\|_{q_*}\|a\|_{2^*}
 \le C_n\|g\|_{q_*}\|\nabla a\|_2.
\]
This proves the $L^2$ bound for $F_g=\nabla a$ and the distributional
identity. The kernel representation is
$F_g=(\nabla\Gamma_0)*g$: the subtracted mean contributes zero because
$\int\nabla\Gamma_0=0$. In particular,
\begin{equation}\label{eq:torus-flux-L1}
 \|F_g\|_1\le\|\nabla\Gamma_0\|_1\|g\|_1\le C_n\|g\|_1.
\end{equation}

We verify the BMO bound for its Euclidean periodic extension on every
ball. Fix a small threshold $r_c=1/64$. For $r<r_c$, the ball and its
double lift to a single Euclidean chart. On $B_{2r}(z)$, the singularity
of $\nabla\Gamma_0$ has size at most $C_n|x-y|^{1-n}$; its smooth part
is bounded. The calculation in Lemma~\ref{lem:newton-flux} gives
\[
 \avint_{B_r(z)}|F_{g,\mathrm{near}}|
 \le C_nm_{1,\T}(g)+C_n\|g\|_1.
\]
For the far part, subtract its value at $z$. On shells with
$2^{j+1}r\le1/8$, the derivative bound for the periodic kernel is
$C_n(2^jr)^{-n}$, so the oscillation is bounded by
\[
 C_nr\sum_{\substack{j\ge1\\2^{j+1}r\le1/8}}
       (2^jr)^{-n}\int_{B_{2^{j+1}r}(z)}|g|
 \le C_nm_{1,\T}(g)\sum_{j\ge1}2^{-j}.
\]
After the final such shell, the distance is bounded below by a fixed
positive number. The remaining kernel derivative is bounded, so the
remaining oscillation is at most $C_nr\|g\|_1$. These estimates also
cover a possibly incomplete last shell, and prove the required small-ball
bound after replacing the comparison constant by the actual average.

For Euclidean balls of radius $r\ge r_c$, periodicity and a covering by
unit cells give
\[
 \avint_{B_r(z)}|(F_g)_{\mathrm{per}}|
 \le C_n\frac{(1+r)^n}{r^n}\|F_g\|_{L^1(\Tn)}
 \le C_n\|g\|_1.
\]
The fixed lower threshold $r_c$ is absorbed into the dimensional constant.
The mean oscillation is at most twice this average. Together with the
small-ball calculation and \eqref{eq:torus-flux-L1}, this proves
\eqref{eq:torus-flux-lemma}.
\end{proof}

Apply the lemma with
\begin{equation}\label{eq:torus-source-flux}
 g=\tau|f||u|,\qquad \bar g=\int g,\qquad
 F_g=\nabla(-\Delta_\T)^{-1}(g-\bar g),\qquad F=F_g+\tau f.
\end{equation}
Energy and the exponent identity in \eqref{eq:g-qstar} give
$\|g\|_{q_*}\le C_f$ and $0\le\bar g\le C_f$.
For $r\le1/8$, Cauchy--Schwarz and \eqref{eq:torus-Morrey} give
$\int_{B_r(z)}g\le C_fM^{1/2}r^{n-1}$. Hence
\begin{equation}\label{eq:torus-source-bounds}
 \|F\|_2\le C_f,\qquad
 [F_{\mathrm{per}}]_{\BMO}\le C_f(1+M^{1/2})\le C_f(1+K)^{1/2},
 \qquad -\Div F=g-\bar g-\tau\Div f.
\end{equation}
The mean $\bar g$ has not been discarded.

\subsection{The zero mode and its absorption}
We first give the scalar smoothing estimate used for the periodic zero mode.
It does not use an off-diagonal Gaussian estimate.

\begin{lemma}\label{lem:periodic-smoothing}
Let $b$ be a bounded skew matrix on $\Tn$. Its semigroup
$P_t=e^{-tL_b}$ is positive, contractive on $L^1$ and $L^\infty$, and
preserves constants. For $1\le q\le\infty$,
\begin{equation}\label{eq:torus-smoothing}
 \|P_t\|_{L^q\to L^\infty}
 \le C_{n,q}(1+t^{-n/(2q)}),\qquad t>0,
\end{equation}
with the usual interpretation at $q=\infty$. All constants are independent
of $\|b\|_\infty$. The positive resolvent on the full periodic space is
\begin{equation}\label{eq:resolvent}
 R_b=(1+L_b)^{-1}=\int_0^\infty e^{-t}P_t\,dt.
\end{equation}
\end{lemma}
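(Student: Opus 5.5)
The plan is to build everything from the symmetric part of the form and to let the skew part disappear in every estimate. The point is that the skew part gives zero contribution whenever a function of $w$ is tested against itself.

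\textbf{Generator and basic semigroup properties.} Consider the form
\[
a(w,\phi)=\int_{\Tn}(I+b)\nabla w\cdot\nabla\phi
\]
on $W^{1,2}(\Tn)$. It is bounded for the fixed $b$, and it satisfies the sector condition with $a(w,w)=\|\nabla w\|_2^2$. Hence it generates an analytic contraction semigroup $P_t$ on $L^2$. The adjoint form has coefficient $I-b=I+b^{\mathsf T}$, which is again skew-perturbed, so every property proved for $P_t$ also holds for $P_t^*$.

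\emph{Positivity and $L^\infty$-contractivity.} I would use the Beurling--Deny/Ouhabaz criteria. For positivity, check that $w_+\in W^{1,2}$ and $a(w_+,w_-)=0$; this holds because $\nabla w_+$ and $\nabla w_-$ have disjoint supports. For $L^\infty$-contractivity, check that $\Re a(w\wedge1,(w-1)_+)\ge0$. Since $\nabla(w\wedge1)\cdot\nabla(w-1)_+=0$ almost everywhere, the form actually vanishes there. The skew term also vanishes pointwise, exactly as in Step~I of the proof of Lemma~\ref{lem:flux-sup}.

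\emph{Preservation of constants and $L^1$-contractivity.} Constants are preserved because $L_b1=0$. Applying the same argument to the adjoint gives $L^\infty$-contractivity of $P_t^*$, hence $L^1$-contractivity of $P_t$. Riesz--Thorin then yields contractivity on every $L^q$.

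\textbf{Smoothing estimate.} I would use Nash's method, whose only inputs are energy identities in which $b$ cancels. For $w_0\in L^1$ and $w(t)=P_tw_0$, we have $\frac{d}{dt}\|w\|_2^2=-2\|\nabla w\|_2^2$, again because the skew part drops out. Periodic Nash's inequality reads
\[
\|w-\bar w\|_2^{2+4/n}\le C_n\|\nabla w\|_2^2\|w\|_1^{4/n}.
\]
Moreover $\|w\|_1\le\|w_0\|_1$, and the mean $\bar w$ is conserved, since $\int L_bw=0$. Setting $h=\|w-\bar w\|_2^2$, one gets the differential inequality $h'\le -c\,h^{1+2/n}\|w_0\|_1^{-4/n}$. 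Integrating gives $h(t)\le Ct^{-n/2}\|w_0\|_1^2$. Together with $|\bar w|\le\|w_0\|_1$, this yields
\[
\|P_t\|_{L^1\to L^2}\le C_n(1+t^{-n/4}).
\]
The identical bound holds for $P_t^*$, so duality gives $\|P_t\|_{L^2\to L^\infty}\le C_n(1+t^{-n/4})$. Composing at time $t/2$ gives $\|P_t\|_{L^1\to L^\infty}\le C_n(1+t^{-n/2})$. Riesz--Thorin interpolation with the $L^\infty\to L^\infty$ contraction then produces \eqref{eq:torus-smoothing}. Every constant comes from Nash's inequality and the dissipation identity, so none depends on $\|b\|_\infty$.

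\textbf{Resolvent formula.} Formula \eqref{eq:resolvent} is the Laplace-transform representation of the resolvent of the generator $-L_b$ at the point $1$. The integral converges in operator norm because $\|P_t\|\le1$. Positivity of $R_b$ follows from positivity of $P_t$.

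\textbf{Main obstacle.} The delicate step is the Nash argument with a merely bounded, nonsmooth $b$. It requires $w(t)\in W^{1,2}$ with the energy identity holding for almost every $t$. I would justify this through analyticity of the semigroup on $L^2$, applied to initial data in $L^2$. The $L^1$ case then follows by density, using $L^1$-contractivity.
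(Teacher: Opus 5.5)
Your proposal is correct. It follows the same overall architecture as the paper: a form-defined semigroup, truncation criteria for positivity and the maximum principle, the adjoint with coefficient $-b$ for $L^1$-contractivity, Nash's method via the dissipation identity, duality and composition at $t/2$, interpolation with the $L^\infty$ contraction, and the Laplace transform for $R_b$. Two steps are carried out differently.

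\emph{Semigroup properties.} You invoke Kato's sectorial-form theory and the Beurling--Deny/Ouhabaz criteria. The paper instead builds $P_t$ from implicit Euler resolvents and runs the positive and negative truncation tests by hand. These are equivalent in substance, since the Ouhabaz criteria are proved through exactly such resolvent truncation tests.

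\emph{Nash step.} This is the more substantive difference. The paper uses the inhomogeneous inequality $\|z\|_2^{2+4/n}\le C_n(\|\nabla z\|_2^2+\|z\|_2^2)\|z\|_1^{4/n}$. This produces the extra term in $E'\le -c_nE^{1+2/n}+2E$, which the paper removes with $Y=e^{-2t}E$. It therefore obtains the $L^1\to L^2$ bound only for $t\le 1$ and treats $t\ge1$ by contraction. You instead use conservation of the mean, $\int L_bw=a(w,1)=0$, together with the mean-zero periodic Nash inequality applied to $w-\bar w$. This gives a clean differential inequality for $h=\|w-\bar w\|_2^2$, valid for all $t>0$. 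It yields $C_n(1+t^{-n/4})$ in one stroke, and in fact the sharper statement that $P_t$ approaches the projection onto constants at rate $t^{-n/4}$ from $L^1$ to $L^2$. The paper's route uses only $L^1$-contractivity, not the conservation law. Yours trades the exponential weight and the short/long-time split for that conservation law, which holds automatically here.

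\emph{A small attribution point.} In your $L^\infty$-contractivity check, the whole integrand $(I+b)\nabla(w\wedge1)\cdot\nabla(w-1)_+$ vanishes because the two gradients have disjoint supports. It does not vanish by skew-symmetry, as in Step~I of Lemma~\ref{lem:flux-sup}; the same holds for $a(w_+,w_-)=0$. Skew-symmetry is needed only for the dissipation identity $\Re a(w,w)=\|\nabla w\|_2^2$. That identity is what makes every Nash constant independent of $\|b\|_\infty$, so your conclusion about the constants stands.
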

\begin{proof}
The bounded form on $W^{1,2}(\Tn)$, with symmetric part the Dirichlet
form, defines the $L^2$ energy semigroup. One can equivalently construct
it from the implicit Euler resolvents $(I+\delta L_b)^{-1}$, whose
existence follows from the coercive form. Positive and negative
truncation tests in these resolvents give positivity and the maximum
principle. Since $L_b1=0$, the resolvents and the limiting semigroup
preserve constants. Positivity and preservation of constants give
$L^\infty$ contraction. The adjoint has skew coefficient $-b$, so it has
the same properties; duality gives $L^1$ contraction. These arguments
use no upper bound for the skew coefficient in their constants.

For $a\in L^1\cap L^2$, the solution $z(t)=P_ta$ satisfies the energy
identity
\begin{equation}\label{eq:periodic-parabolic-energy}
 \frac{d}{dt}\|z(t)\|_2^2=-2\|\nabla z(t)\|_2^2
\end{equation}
for almost every $t>0$. It follows by the usual time-regularization test
for energy solutions. Interpolating $L^2$ between $L^1$ and $L^{2^*}$
and applying periodic Sobolev yields the inhomogeneous Nash inequality
\[
 \|z\|_2^{2+4/n}
 \le C_n(\|\nabla z\|_2^2+\|z\|_2^2)\|z\|_1^{4/n}.
\]
If $m=\|a\|_1>0$, let $E(t)=\|P_ta\|_2^2/m^2$.
The $L^1$ contraction and \eqref{eq:periodic-parabolic-energy} imply
\[
 E'\le-c_nE^{1+2/n}+2E.
\]
Set $Y(t)=e^{-2t}E(t)$. Then
$Y'\le-c_ne^{4t/n}Y^{1+2/n}\le-c_nY^{1+2/n}$.
Integration, dropping the nonnegative initial reciprocal power, gives
$Y(t)\le C_nt^{-n/2}$. Thus
\[
 \|P_ta\|_2\le C_nt^{-n/4}\|a\|_1\qquad(0<t\le1).
\]

If $m=0$, the assertion is immediate. Density removes the initial
$L^2$ restriction. Apply the same argument to the adjoint and use duality
to obtain $L^2\to L^\infty$. Composition at time $t/2$ gives
$\|P_t\|_{1\to\infty}\le C_nt^{-n/2}$ for $t\le1$.
For $t\ge1$, use the estimate at time one and contraction for the remaining
time. Interpolation with the $L^\infty$ contraction proves
\eqref{eq:torus-smoothing}. Finally the semigroup Laplace transform gives
\eqref{eq:resolvent}; on $L^2$ it agrees with the inverse of the coercive
form $\int w\phi+\int(I+b)\nabla w\cdot\nabla\phi$ by uniqueness.
\end{proof}

\begin{proposition}\label{prop:torus-closure}
Every normalized regular periodic homotopy solution satisfies
\begin{equation}\label{eq:torus-closure}
 K+M+\|\theta_+\|_\infty\le C_f.
\end{equation}
Before closing, the estimates are
\begin{equation}\label{eq:torus-intermediate}
 \|\theta_+\|_\infty\le C_f(1+K)^{(n-2)/(2n)},\qquad
 K\le C_f(1+K)^{(n-4)/(2n)}.
\end{equation}
\end{proposition}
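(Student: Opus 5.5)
The plan is to mirror the whole-space argument of Section~\ref{sec:whole-apriori}, with one change: the periodic Bernoulli equation must be treated through the full resolvent $R_d=(1+L_d)^{-1}$. The reason is that the positive source $g$ has nonzero mean, so its mean part cannot be put into divergence form; likewise the zeroth-order term needed for coercivity on $\Tn$ produces a contribution of $\theta_+$ itself. Throughout, $d$ is the bounded periodic skew potential of Lemma~\ref{lem:torus-signed}, so that $L_d=-\Delta+u\cdot\nabla$.

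\emph{First estimate in \eqref{eq:torus-intermediate}.} The Bernoulli identity \eqref{eq:Bernoulli} holds verbatim on $\Tn$. Adding $\theta$ to both sides and using $\theta=\theta_+-\theta_-$ together with \eqref{eq:torus-source-bounds}, I write
\[
 (1+L_d)\theta=-\Div F+\bar g+\theta_+-\mu,\qquad
 \mu=D(u)+\tau(|f||u|-f\cdot u)+\theta_-\ge0 .
\]
Next I introduce three auxiliary functions:
\begin{itemize}
\item $v=R_d(-\Div F)$, the weak solution of \eqref{eq:scalar-torus};
\item the constant $R_d\bar g=\bar g$, since $R_d$ preserves constants by Lemma~\ref{lem:periodic-smoothing};
\item $h=R_d\theta_+\ge0$.
\end{itemize}
The torus case of Lemma~\ref{lem:comparison}, applied to $r=\theta-v-\bar g-h$, gives $\theta\le v+\bar g+h$. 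This application is legitimate because every function involved is bounded and lies in $W^{1,2}$ for the individual regular solution. By \eqref{eq:scalar-sup-torus} and \eqref{eq:torus-source-bounds}, $\|v\|_\infty\le C_f(1+K)^{(n-2)/(2n)}$, and $0\le\bar g\le C_f$. For $h$, I use \eqref{eq:resolvent} together with \eqref{eq:torus-smoothing} at the exponent $q=n$:
\[
 \|h\|_\infty\le\int_0^\infty e^{-t}C_n\bigl(1+t^{-1/2}\bigr)\,dt\;\|\theta_+\|_n\le C_n\|\theta_+\|_n .
\]
Interpolating $L^n$ between $L^{n/(n-2)}$ (controlled by \eqref{eq:torus-energy}) and $L^\infty$, and then applying Young's inequality, gives $\|h\|_\infty\le\frac12\|\theta_+\|_\infty+C_f$. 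This term is absorbed on the left, which is legitimate because $\|\theta_+\|_\infty<\infty$ for each individual regular solution on the compact torus. The result is the first bound in \eqref{eq:torus-intermediate}. None of these constants depends on $\|d\|_\infty$, by Lemmas~\ref{lem:flux-sup} and~\ref{lem:periodic-smoothing}.

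\emph{Second estimate in \eqref{eq:torus-intermediate}.} Using \eqref{eq:positive-torus-kernel}, for $0<\rho\le1/8$ I split $\Gamma_+*\theta_+(y)$ into three parts:
\begin{itemize}
\item the region $\delta<\rho$, bounded by $C_n\|\theta_+\|_\infty\rho^2$;
\item the region $\rho\le\delta\le1/2$, bounded by H\"older with $s=n/(n-2)$, exactly as in Lemma~\ref{lem:potential}, giving $C_n\|\theta_+\|_{n/(n-2)}\rho^{2-n/s}$;
\item the bounded part of the kernel, giving $C_n\|\theta_+\|_1\le C_f$.
\end{itemize}
Here $s<n/2$ holds because $n>4$. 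I choose $\rho=\min\{1/8,\,(\|\theta_+\|_{n/(n-2)}/\|\theta_+\|_\infty)^{s/n}\}$; if the minimum is $1/8$, everything is bounded by $C_f$. This yields
\[
 K\le C_f\bigl(1+\|\theta_+\|_\infty^{1-2s/n}\bigr)
 =C_f\bigl(1+\|\theta_+\|_\infty^{(n-4)/(n-2)}\bigr).
\]
Inserting the first estimate gives $K\le C_f(1+K)^{(n-4)/(2n)}$.

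\emph{Closure.} Since $(n-4)/(2n)<1$ and $K$ is finite for the individual solution, the second estimate bounds $K$ by a data constant, exactly as in Proposition~\ref{prop:closure}. Then \eqref{eq:torus-Morrey} bounds $M$, and the first estimate in \eqref{eq:torus-intermediate} bounds $\|\theta_+\|_\infty$.

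The step I expect to be most delicate is controlling the zero-order term $h=R_d\theta_+$ uniformly in $\|d\|_\infty$. The argument relies only on the Nash-type smoothing in Lemma~\ref{lem:periodic-smoothing} and on the absorption. I must check that the choice $q=n$ makes the time integral converge, which holds since $n/(2q)=1/2<1$. I must also check that the interpolation exponent leaves a strictly sublinear power of $\|\theta_+\|_\infty$; it does, because $\|\theta_+\|_n\le\|\theta_+\|_{n/(n-2)}^{1/(n-2)}\|\theta_+\|_\infty^{(n-3)/(n-2)}$.
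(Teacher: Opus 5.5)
Your proposal is correct and follows the paper's proof: the same resolvent rewriting $(1+L_d)\theta=\theta_++\bar g-\Div F-\widetilde\mu$, the same comparison $\theta\le R_d\theta_++\bar g+v$, and the same kernel split giving $K\le C_f(1+\|\theta_+\|_\infty^{(n-4)/(n-2)})$. The only difference is in absorbing $R_d\theta_+$. You use the $L^n\to L^\infty$ smoothing over all times, followed by interpolation and Young. The paper instead splits the Laplace integral at $T=1/4$, using $L^\infty$ contraction for short times and $L^{n/(n-2)}\to L^\infty$ smoothing for long times. Both variants are valid.
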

\begin{proof}
Use $R=(1+L_d)^{-1}$ on the full space, including the constants, and let
$v=R(-\Div F)$. Equations \eqref{eq:torus-source-bounds} and
\eqref{eq:scalar-sup-torus} give
\begin{equation}\label{eq:torus-v-bound}
 \|v\|_\infty\le C_f(1+K)^{(n-2)/(2n)}.
\end{equation}

The Bernoulli identity \eqref{eq:Bernoulli} holds periodically, and
$\theta=\theta_+-\theta_-$ gives
\begin{equation}\label{eq:torus-zero-mode}
 (1+L_d)\theta=\theta_++\bar g-\Div F-\widetilde\mu,
 \quad
 \widetilde\mu=D(u)+\tau(|f||u|-f\cdot u)+\theta_-\ge0.
\end{equation}
For the individual regular solution, $\theta$ is a bounded energy
function, $\widetilde\mu\in L^1$, and $R\theta_+$ belongs to
$W^{1,2}\cap L^\infty$ by the coercive formulation and the maximum
principle. Since $(1+L_d)\bar g=\bar g$, comparison applied to
$\theta-R\theta_+-\bar g-v$ yields
\begin{equation}\label{eq:torus-comparison}
 \theta\le R\theta_++\bar g+v.
\end{equation}
This explicitly accounts for the nonzero mean of the positive source $g$.

By \eqref{eq:resolvent}, contraction on $0<t<T$, and
\eqref{eq:torus-smoothing} with $q=n/(n-2)$ on later times,
\begin{align}
 \|R\theta_+\|_\infty
 &\le T\|\theta_+\|_\infty+C_n\|\theta_+\|_{\frac n{n-2}}
       \int_T^\infty e^{-t}(1+t^{-(n-2)/2})\,dt\notag\\
 &\le T\|\theta_+\|_\infty+C_{f,T}.
 \label{eq:zero-mode-absorb}
\end{align}
Fix $T=1/4$. The integral is finite; its dependence on $n$ is permitted.
Equations \eqref{eq:torus-comparison}, \eqref{eq:torus-v-bound}, and
$\bar g\le C_f$ give
$\|\theta_+\|_\infty\le \|\theta_+\|_\infty/4+C_f+C_f(1+K)^{(n-2)/(2n)}$, hence
$\|\theta_+\|_\infty\le C_f(1+K)^{(n-2)/(2n)}$.

For $0<\rho\le1/16$, the positive kernel in
\eqref{eq:positive-torus-kernel} and \eqref{eq:torus-energy} imply
\begin{equation}\label{eq:torus-potential-split}
 K\le C_f+C_n\|\theta_+\|_\infty\rho^2+C_f\rho^{4-n}.
\end{equation}
The smooth part costs $C_n\|\theta_+\|_1$; the inner singularity costs
$C_n\|\theta_+\|_\infty\rho^2$; H\"older with the exponents $n/(n-2)$ and $n/2$ controls the
outer singularity. For sufficiently large $\|\theta_+\|_\infty$, choose
$\rho=\|\theta_+\|_\infty^{-1/(n-2)}\le1/16$. For the remaining bounded range of $\|\theta_+\|_\infty$,
$K\le\|\Gamma_+\|_1\|\theta_+\|_\infty$ is already bounded. Thus
\[
 K\le C_f(1+\|\theta_+\|_\infty^{(n-4)/(n-2)})
 \le C_f(1+K)^{(n-4)/(2n)}.
\]
This is sublinear. Since $K$ was finite initially, it is uniformly
bounded, and so are $M$ and $\|\theta_+\|_\infty$. The periodic argument uses this exponent,
not the strengthened whole-space exponent from
\eqref{eq:closure-improved}.
\end{proof}

\subsection{Fixed-scale regularity and periodic degree}
\begin{proof}[Proof of Theorem~\ref{thm:extensions}(1)]
\proofstep{I}{Check the local criterion on a fixed coordinate ball.}
We first turn the scalar bounds into uniform regularity for any periodic
homotopy solution. Set $a=1/8$. Every $B_a(x_0)$ lies in a Euclidean
coordinate chart. For $y\in B_1$, define
\begin{equation}\label{eq:periodic-rescale}
 v(y)=au(x_0+ay),\quad\pi(y)=a^2p(x_0+ay),\quad
 F_a(y)=a^3\tau f(x_0+ay).
\end{equation}
These fields satisfy the same Navier--Stokes system on $B_1$. Their
relevant norms transform as
\begin{align*}
 \|v\|_{L^2(B_1)}&=a^{1-n/2}\|u\|_{L^2(B_a(x_0))},&
 \|\nabla v\|_{L^2(B_1)}&=a^{2-n/2}\|\nabla u\|_{L^2(B_a(x_0))},\\
 \|\pi\|_{L^{\frac n{n-1}}(B_1)}&=a^{2-(n-1)}\|p\|_{L^{\frac n{n-1}}(B_a(x_0))},&
 \|\nabla\pi\|_{L^{\frac n{n-1}}(B_1)}&=a^{3-(n-1)}\|\nabla p\|_{L^{\frac n{n-1}}(B_a(x_0))}.
\end{align*}
Also $(\pi+|v|^2/2)_+(y)=a^2\theta_+(x_0+ay)$, so its $L^n(B_1)$ norm is at
most $a^2|B_1|^{1/n}\|\theta_+\|_\infty$. Equation \eqref{eq:torus-energy}, the inclusion
$L^{\frac n{n-2}}(B_a)\subset L^{\frac n{n-1}}(B_a)$, and
Proposition~\ref{prop:torus-closure} control every quantity in
\eqref{eq:local-input}. All powers of $a$ are fixed dimensional
constants. Lemma~\ref{input:Bernoulli} with $r=n$ gives a
uniform $C^1$ bound on $B_{1/2}$ for $v$. Scale back and cover the torus
by finitely many $B_{a/2}(x_0)$ to obtain
\[
 \|u\|_{C^1(\Tn)}\le C_f.
\]
Since $\tau f-u\cdot\nabla u$ is bounded, periodic Stokes estimates with
zero means give all $W^{2,q}\times W^{1,q}$ bounds in
\eqref{eq:torus-main}.

\proofstep{II}{Define the compact periodic Stokes map.}
For existence, let $X_\T$ be the real Banach space of mean-zero,
divergence-free $C^1$ periodic fields. The mean-zero periodic Helmholtz
projection is defined by
\[
 \widehat{\mathbb P_\T G}(k)=
 \begin{cases}
 \left(I-\dfrac{k\otimes k}{|k|^2}\right)\widehat G(k),&k\in\Z^n\setminus\{0\},\\
 0,&k=0.
 \end{cases}
\]
Set
\[
 T_\T(\tau,v)=(-\Delta_\T)^{-1}\mathbb P_\T
                    (\tau f-(v\cdot\nabla)v).
\]
The source has mean zero: $\int f=0$ and
$(v\cdot\nabla)v=\Div(v\otimes v)$ has zero integral. Periodic Stokes
estimates make this a continuous compact map into $X_\T$ on bounded
sets: its output is bounded in $W^{2,q}$ for $q>n$, and compactly embeds
into $C^1$. Each fixed point is regular and satisfies the bounds just
proved. 
\proofstep{III}{Verify the initial degree and continue.}
At $\tau=0$, energy and the mean-zero condition force $v=0$.
The periodic Stokes estimate gives
\[
 \|T_\T(0,v)\|_{C^1}\le C_n\|v\|_{C^1}^2,
\]
which verifies \eqref{eq:LS-small-map}. The $C^1$ a priori bound
proved in Step~I is uniform in $\tau$; Step~II supplies collective
compactness and continuity. All hypotheses of
Theorem~\ref{thm:LS-continuation} hold, and it gives a fixed point
at $\tau=1$.
 The mean-zero
pressure then solves \eqref{eq:NS}, and the uniform estimates prove the
assertion. Smooth-force regularity follows by periodic elliptic
bootstrapping.
\end{proof}

\section{Support-independent estimates and noncompact forces}\label{sec:noncompact}
The constants in Theorem~\ref{thm:whole} depend on a containing ball for the
force. They cannot by themselves be used when that ball expands. We first
prove bounds depending only on
\begin{equation}\label{eq:force-N}
 N=\|f\|_\infty+\|f\|_{q_*},
\end{equation}
for compactly supported forces, and only then pass to a noncompact force.
All scalar comparisons and signed identities in the first part of this
section are applied to individual weighted fixed points, where their
admissibility was established in Lemma~\ref{lem:initial}.

\subsection{Pressure splitting and uniform force norms}
\begin{proposition}\label{prop:independent}
Let $f$ be bounded and compactly supported with
$\|f\|_\infty+\|f\|_{q_*}\le N$. Every weighted fixed point for the force
$\tau f$, $0\le\tau\le1$, satisfies \eqref{eq:noncompact-main} with a
constant $C_n(N)$ independent of the support. Its normalized pressure
splits as $p=p_N+p_f$, where $p_f=-\tau\partial_j(-\Delta)^{-1}f_j$, and
\begin{equation}\label{eq:split-pressure-bounds}
 \|p_N\|_{\frac n{n-2}}+\|\nabla p_N\|_{\frac n{n-1}}
 +\|p_f\|_2+\|\nabla p_f\|_{q_*}\le C_n(N).
\end{equation}
\end{proposition}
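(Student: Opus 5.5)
Rerun the whole-space a priori chain of Section~\ref{sec:whole-apriori} and Section~\ref{sec:existence}, and track, at each step, which force norm enters. Wherever the argument used $R_0$ (through $\|f\|_{n/2}$, $\|f\|_2$, or the compact support), replace that quantity by a norm controlled by $N$. The fixed-point admissibility facts of Lemma~\ref{lem:initial} (decay, $\theta\in\Hdot\cap L^\infty$, bounded $d$, finite $K,M$) are qualitative and may depend on the support. Only the quantitative constants must be support-independent.

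\textbf{Energy and pressure.} The energy identity \eqref{eq:energy-equality} already gives $\|\nabla u\|_2+\|u\|_{2^*}\le C_n\|f\|_{q_*}\le C_n N$. For the pressure I would use the split
\[
 p_N=\partial_i\partial_j(-\Delta)^{-1}(u_iu_j),\qquad p_f=-\tau\partial_j(-\Delta)^{-1}f_j.
\]
This is legitimate by \eqref{eq:pressure-normalization}.
\begin{itemize}
\item Calder\'on--Zygmund with $u_iu_j\in L^{n/(n-2)}$ gives $\|p_N\|_{n/(n-2)}\le C_nN^2$.
\item Since $\nabla p_N$ is an order-zero operator applied to $(u\cdot\nabla)u\in L^{n/(n-1)}$, we get $\|\nabla p_N\|_{n/(n-1)}\le C_nN^2$.
\item Hardy--Littlewood--Sobolev of order one from $L^{q_*}$ to $L^2$ gives $\|p_f\|_2\le C_nN$.
\item Calder\'on--Zygmund gives $\|\nabla p_f\|_{q_*}\le C_nN$.
\end{itemize}
This proves \eqref{eq:split-pressure-bounds}.

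\textbf{Signed identity.} Lemma~\ref{lem:signed} bounded $\mathcal F_\tau=-\tau\partial_i\Psi*f_i$ using $R_0^3$, and that bound must be replaced. I would instead write $\mathcal F_\tau=(-\Delta)^{-1}p_f$ and use only one-sided control. Since $\theta\le|p_N|+|u|^2/2+p_f$, the sign structure needs rearranging. I would define
\[
\theta^N=p_N+|u|^2/2 .
\]
The signed identity then reads $(-\Delta)^{-1}\theta^N=\frac1{2\omega_n}\mathcal R$ with no force remainder. From $\theta^N\le\theta_++|p_f|$ we get
\[
\mathcal R\le C_n\bigl(K+\|(-\Delta)^{-1}|p_f|\|_\infty\bigr).
\]
The last term is bounded by Lemma~\ref{lem:potential}, which needs $p_f\in L^s\cap L^\infty$ with $s<n/2$. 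Here $p_f\in L^2$, and $p_f\in L^\infty$ follows from $f\in L^{q_*}\cap L^\infty$ by splitting the kernel $|x|^{1-n}$ near and far: the near part uses $\|f\|_\infty$, and the far part uses $\|f\|_{q_*}$ because $|x|^{1-n}\boldsymbol 1_{|x|>1}\in L^{2^*}$. Since $2<n/2$, the bound applies. Hence $M\le C_n(N)(1+K)$, and the spherical averaging is unchanged.

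\textbf{Flux and closure.} In Proposition~\ref{prop:head}, $\|g\|_{q_*}\le\|f\|_{n/2}\|u\|_{2^*}$ used $\|f\|_{n/2}$. Interpolating $\|f\|_{n/2}\le\|f\|_{q_*}^{\alpha}\|f\|_\infty^{1-\alpha}$ is valid because $q_*<n/2$, so $\|f\|_{n/2}\le N$. Also $\|f\|_2\le N$ by interpolation, since $q_*<2$. The BMO bound uses only $\|f\|_\infty$ and $M$. Thus \eqref{eq:F-L2}--\eqref{eq:F-BMO}, the comparison $\theta\le v$, and \eqref{eq:h-2star} all hold with constants $C_n(N)$. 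Proposition~\ref{prop:closure} then closes $K,M,\|\theta_+\|_\infty,\|\theta_+\|_{2^*}$ support-independently.

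\textbf{Local gradient bound.} The remaining work is Corollary~\ref{cor:C1} applied on unit balls with inputs from Lemma~\ref{input:Bernoulli}. Its pressure input needs $\|p\|_{W^{1,n/(n-1)}(B_1(z))}$. Locally this follows from $p_N\in L^{n/(n-2)}$ and $p_f\in L^2$, both contained in $L^{n/(n-1)}(B_1)$, and from $\nabla p_f\in L^{q_*}(B_1)\subset L^{n/(n-1)}(B_1)$, which holds since $q_*>n/(n-1)$. This gives $\|u\|_\infty+\|\nabla u\|_\infty\le C_n(N)$.

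\textbf{Main obstacle.} I expect the hardest step to be the force remainder in the signed identity. The choice of $\theta^N$ versus $\theta$ must be made consistently with $K$, which is defined through $\theta_+$. The plan above handles this via $|p_f|\in L^2\cap L^\infty$. If that fails, the fallback is to define $K$ with $\theta^N_+$ and to redo the comparison with the $p_f$ term moved into the flux.
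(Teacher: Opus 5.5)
Your proposal is correct and follows the paper's proof essentially step for step. The shared steps are:
\begin{enumerate}
\item energy bounds that use only $\|f\|_{q_*}$;
\item the split pressure estimates \eqref{eq:split-pressure-bounds};
\item a support-independent bound on the force remainder of the signed identity, which gives $M\le C_n(N)(1+K)$;
\item the flux bounds, with $\|f\|_{n/2}$ and $\|f\|_2$ interpolated between $L^{q_*}$ and $L^\infty$;
\item the closure via Lemma~\ref{lem:potential} with $s=2^*$;
\item Lemma~\ref{input:Bernoulli}, fed by the split pressure.
\end{enumerate}

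The only variation is in the force remainder. The paper bounds $\partial_i\Psi*f_i$ directly by splitting $|x|^{3-n}$ at radius one: the near part against $\|f\|_\infty$, the far part in $L^{2^*}$ against $\|f\|_{q_*}$. Your factorization $\mathcal F_\tau=(-\Delta)^{-1}p_f$, combined with Lemma~\ref{lem:potential} at $s=2$, reaches the same bound with one extra step. Your $\theta^N$ rearrangement is harmless but unnecessary, since $|\mathcal F_\tau|\le(-\Delta)^{-1}|p_f|$ already gives the two-sided replacement for \eqref{eq:force-remainder}.
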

\begin{proof}
\proofstep{I}{Energy and pressure components}
For every $q_*\le a<\infty$, interpolation gives
\begin{equation}\label{eq:force-interpolation}
 \|f\|_a\le\|f\|_{q_*}^{q_*/a}\|f\|_\infty^{1-q_*/a}
 \le C(N).
\end{equation}
In particular $a=2$ and $a=n/2$ are permitted. The energy identity
\eqref{eq:energy-equality} gives
\begin{equation}\label{eq:independent-energy}
 \|\nabla u\|_2+\|u\|_{2^*}\le C_n(N).
\end{equation}
The order-zero operator on $u_i u_j$ gives
$\|p_N\|_{\frac n{n-2}}\le C_n\|u\|_{2^*}^2$. To estimate its gradient, use
$\partial_i(u_i u_j)=u_i\partial_i u_j$ and commute derivatives:
\[
 \partial_kp_N=\partial_k\partial_j(-\Delta)^{-1}(u_i\partial_i u_j).
\]
This is an order-zero operator applied to the convective term in
$L^{\frac n{n-1}}$. Hence
$\|\nabla p_N\|_{\frac n{n-1}}\le C_n\|u\|_{2^*}\|\nabla u\|_2$.
The order-one bound $L^{q_*}\to L^2$ gives
$\|p_f\|_2\le C_n\|f\|_{q_*}$, and order-zero estimates give
$\|\nabla p_f\|_{q_*}\le C_n\|f\|_{q_*}$. These prove
\eqref{eq:split-pressure-bounds}. 

\proofstep{II}{The signed force remainder}
The remainder from \eqref{eq:signed} has the uniform estimate
\begin{equation}\label{eq:independent-force-potential}
 \|\partial_i\Psi*f_i\|_\infty
 \le C_n(\|f\|_\infty+\|f\|_{q_*})\le C_n(N).
\end{equation}
Indeed, the kernel $|x|^{3-n}$ is integrable on $B_1$, and its restriction
to $\{|x|>1\}$ belongs to $L^{2^*}$ for $n>4$:
\[
 (n-3)2^*>n\quad\Longleftrightarrow\quad n>4.
\]
Split at radius one and use the conjugacy of $q_*,2^*$ in the far part.
The signed identity is valid for each of the present compact-force
solutions with its individual decay. Estimate
\eqref{eq:independent-force-potential}, in place of its support-radius
bound, now gives
\begin{equation}\label{eq:independent-M}
 M\le C_n(N)(1+K).
\end{equation}

\proofstep{III}{Flux estimates and the strengthened potential closure}
Equations \eqref{eq:g-qstar}--\eqref{eq:F-BMO} require only the force
norms $L^{n/2}$, $L^2$, $L^\infty$ and the velocity energy. All are
controlled independently of the support by
\eqref{eq:force-interpolation}--\eqref{eq:independent-energy}. They give
\begin{equation}\label{eq:independent-flux}
 \|F\|_2\le C_n(N),\qquad
 [F]_{\BMO}\le C_n(N)(1+K)^{1/2},\qquad
 F=\nabla (-\Delta)^{-1}(\tau|f||u|)+\tau f.
\end{equation}
The nonnegative comparison source is integrable with
\[
 \|D(u)\|_1+\tau\int(|f||u|-f\cdot u)
 \le C_n\|\nabla u\|_2^2+2\|f\|_{q_*}\|u\|_{2^*}\le C_n(N).
\]
For each fixed point the bounded skew coefficient and the bounded energy
functions required for Lemma~\ref{lem:comparison} are available. Applying
Proposition~\ref{prop:head} with the constants in
\eqref{eq:independent-flux} yields
\begin{equation}\label{eq:independent-head}
 \|\theta_+\|_\infty\le C_n(N)(1+K)^{(n-2)/(2n)},\qquad
 \|\theta_+\|_{2^*}\le C_n(N).
\end{equation}

The second inequality follows from the energy of the comparison function,
just as in \eqref{eq:comparison-energy}--\eqref{eq:h-2star}. In particular,
it does not use a full-pressure $L^{\frac n{n-2}}$ estimate. Lemma~\ref{lem:potential}
therefore gives directly
\begin{equation}\label{eq:independent-closure}
 K\le C_n(N)(1+\|\theta_+\|_\infty^{(n-6)/(n-2)})
 \le C_n(N)(1+K)^{(n-6)/(2n)}.
\end{equation}
This sublinear inequality uniformly bounds $K$, and then
\eqref{eq:independent-M} and \eqref{eq:independent-head} bound $M$ and $\|\theta_+\|_\infty$.

\proofstep{IV}{Support-independent local velocity regularity}
On each unit ball, \eqref{eq:independent-energy} controls the velocity in
$W^{1,2}$. The split pressure estimates control the full pressure in
$W^{1,\frac n{n-1}}$ on that ball: $\frac n{n-2}>\frac n{n-1}$, $2>\frac n{n-1}$, and $q_*>\frac n{n-1}$ for $n>4$.
All inclusions are on a ball of fixed volume and have constants independent
of its center. The uniform bound for $\|\theta_+\|_\infty$ supplies its $L^n$ norm there. Lemma~\ref{input:Bernoulli} with $r=n$ gives
\begin{equation}\label{eq:independent-C1}
 \|u\|_\infty+\|\nabla u\|_\infty\le C_n(N).
\end{equation}
Together these inequalities prove the proposition.
\end{proof}


\subsection{Local compactness and identification of the pressure}
\begin{proof}[Proof of Theorem~\ref{thm:extensions}(2)]
\proofstep{I}{Approximate the force with uniform bounds.}
Fix $f\in L^\infty\cap L^{q_*}$ and a function
$\eta\in C_c^\infty(B_2)$ with $0\le\eta\le1$, $\eta=1$ on $B_1$.
For positive integers $j$, put
\[
 f_j(x)=\eta(x/j)f(x).
\]
Then $\|f_j\|_\infty+\|f_j\|_{q_*}\le N$,
$f_j\to f$ strongly in $L^{q_*}$ by dominated convergence, and
$f_j=f$ on each fixed ball for sufficiently large $j$.
Theorem~\ref{thm:whole} supplies a regular solution $(u_j,p_j)$ for every
$f_j$, with Stokes decay and the pressure normalization
\begin{equation}\label{eq:approximating-pressure}
 p_j=p_{N,j}+p_{f,j}
 =\partial_i\partial_k(-\Delta)^{-1}(u_{j,i}u_{j,k})-\partial_k(-\Delta)^{-1}f_{j,k}.
\end{equation}
Proposition~\ref{prop:independent} bounds all its stated quantities
uniformly in $j$.

\proofstep{II}{Obtain local compactness of velocity and normalized pressure.}
We establish compactness of the normalized pressures before passing to a
limit. The original Stokes sources $f_j-u_j\cdot\nabla u_j$ are uniformly
bounded by \eqref{eq:independent-C1}. Apply
Lemma~\ref{lem:interior-Stokes} on balls of radii two and one to get, for
every finite $q>1$,
\begin{equation}\label{eq:uniform-local-u}
 \sup_j\sup_{x\in\Rn}\|u_j\|_{W^{2,q}(B_1(x))}\le C_{n,q}(N).
\end{equation}
The equation now gives the same bound for $\nabla p_j$ in local $L^q$.
The averages of $p_j$ are controlled by
\eqref{eq:split-pressure-bounds}, since $p_{N,j}\in L^{\frac n{n-2}}$ and
$p_{f,j}\in L^2$ with uniform norms. Poincar\'e therefore yields
\begin{equation}\label{eq:uniform-local-p}
 \sup_j\sup_{x\in\Rn}\|p_j\|_{W^{1,q}(B_1(x))}\le C_{n,q}(N).
\end{equation}
For $q>n$, these estimates give a common local H\"older modulus for
$\nabla u_j$ and $p_j$, as well as uniform bounds. Compact embedding and
a diagonal subsequence imply
\begin{equation}\label{eq:local-convergence}
 u_j\longrightarrow u\quad\text{in }C^1_{\loc},\qquad
 p_j\longrightarrow p\quad\text{in }C^0_{\loc}.
\end{equation}
The velocity is also bounded weakly in $L^{2^*}$ and its gradient weakly
in $L^2$; their weak limits are identified by the local convergence.
The equation passes to the limit in distributions, since the nonlinear
term converges locally uniformly and $f_j=f$ eventually on each compact
set. Bounds \eqref{eq:uniform-local-u}--\eqref{eq:uniform-local-p} pass
weakly on bounded sets for all finite exponents (one may first use a
countable unbounded list of exponents). Thus $(u,p)$ is regular.

\proofstep{III}{Identify the global pressure components.}
We next identify its global pressure, which is not determined by local
convergence of gradients alone. The products $u_j\otimes u_j$ are
uniformly bounded in the reflexive space $L^{\frac n{n-2}}(\Rn)$ and converge
locally. Any weak subsequential limit is therefore $u\otimes u$:
first test against compactly supported smooth functions and then use
their density in the dual space with the uniform norm bound. Boundedness
of $\partial_i\partial_k(-\Delta)^{-1}$ on $L^{\frac n{n-2}}$ gives
\begin{equation}\label{eq:pressure-component-convergence}
 p_{N,j}\rightharpoonup\partial_i\partial_k(-\Delta)^{-1}(u_i u_k)
       \quad\text{in }L^{\frac n{n-2}},\qquad
 p_{f,j}\longrightarrow-\partial_k(-\Delta)^{-1}f_k\quad\text{in }L^2.
\end{equation}
The second convergence follows from $f_j\to f$ in $L^{q_*}$ and the
order-one potential estimate. Equations
\eqref{eq:local-convergence}--\eqref{eq:pressure-component-convergence}
identify the local pressure limit with \eqref{eq:noncompact-pressure}.
There is no unidentified harmonic function or additive constant.

\proofstep{IV}{Pass the positive-part estimates to the limit.}
Put $\theta_j=p_j+|u_j|^2/2$. Then $\theta_{j,+}\to\theta_+$ locally uniformly.
Fatou's lemma gives $\|\theta_+\|_{2^*}\le C_n(N)$, while the pointwise bounds
give $\|\theta_+\|_\infty\le C_n(N)$. For each fixed $y$, positivity of the
Newtonian kernel and Fatou give
\begin{equation}\label{eq:Fatou-potential}
 (-\Delta)^{-1}\theta_+(y)\le\liminf_{j\to\infty}(-\Delta)^{-1}\theta_{j,+}(y)\le C_n(N).
\end{equation}
For the approximants the potentials have continuous representatives: use
boundedness for the local kernel and the $L^{2^*}$ bound for its far
part. Thus their uniform supremum bounds apply at every center before
Fatou is used. Alternatively the same conclusion follows from lower
semicontinuity of a positive potential and its essential bound.
Weak lower semicontinuity for the energy and velocity norms, together
with these estimates, proves \eqref{eq:noncompact-main}.

The skew potentials $d_j$ need not have uniform $L^\infty$ bounds, and
no passage to a limit in those coefficients has been made. They were
used only for individually admissible scalar comparisons before the
uniform estimates. The limit is taken in the original Navier--Stokes
system, as above.

\medskip
\proofstep{V}{Qualitative decay}
The estimates \eqref{eq:uniform-local-u}--\eqref{eq:uniform-local-p} pass
to the limit, uniformly over unit-ball centers. With $q>n$, they make
$u,\nabla u,p$ uniformly continuous on $\Rn$. A uniformly continuous
function in a finite $L^a$ space tends to zero at infinity. Indeed, if
its absolute value is at least $\varepsilon$ along a sequence escaping
to infinity, uniform continuity supplies a fixed radius on which it
is at least $\varepsilon/2$; a subsequence of these balls is disjoint,
contradicting $L^a$ integrability. Apply this to $u\in L^{2^*}$ and
$\nabla u\in L^2$.

For the pressure, use instead the finite measure of every positive
superlevel set:
\[
 \{|p|>\varepsilon\}\subset
 \{|p_N|>\varepsilon/2\}\cup\{|p_f|>\varepsilon/2\}.
\]
The two sets on the right have finite measure because
$p_N\in L^{\frac n{n-2}}$ and $p_f\in L^2$. The same disjoint-ball argument,
using uniform continuity, proves $p(x)\to0$. This proves the first
assertion of \eqref{eq:noncompact-decay-energy}; it makes no claim of
Stokes power decay for a general force in this class.

\medskip
\proofstep{VI}{The energy equality after passage to the limit}
We prove the equality afresh; weak convergence alone would only give an
inequality. Let $\zeta_R$ equal one on $B_R$, vanish outside $B_{2R}$,
satisfy $0\le\zeta_R\le1$ and $|\nabla\zeta_R|\le C/R$, and let
$A_R=B_{2R}\setminus B_R$. Local regularity justifies testing by
$u\zeta_R$. The resulting identity is
\begin{align}\label{eq:limit-energy-cutoff}
 \int\zeta_R|\nabla u|^2
 &=\int f\cdot u\,\zeta_R
   -\int u_i\partial_j u_i\,\partial_j\zeta_R
   +\frac12\int|u|^2u\cdot\nabla\zeta_R
   +\int p\,u\cdot\nabla\zeta_R.
\end{align}
Since $n\ge16$, both $3$ and $n/2$ are at least $2^*$.
The bounds $u\in L^{2^*}\cap L^\infty$ imply
$u\in L^3\cap L^{n/2}$. Hence the convection error tends to zero, and
$p_Nu\in L^1$ by the conjugacy of $\frac n{n-2}$ and $n/2$, so its pressure
error also tends to zero. For the remaining errors, the identity
$1/2^*+1/2+1/n=1$ and $|A_R|^{1/n}\le C_nR$ give
\begin{align*}
 R^{-1}\int_{A_R}|u||\nabla u|
 &\le C_n\|u\|_{L^{2^*}(A_R)}\|\nabla u\|_{L^2(A_R)}\longrightarrow0,\\
 R^{-1}\int_{A_R}|u||p_f|
 &\le C_n\|u\|_{L^{2^*}(A_R)}\|p_f\|_{L^2(A_R)}\longrightarrow0.
\end{align*}
The force term converges absolutely because $fu\in L^1$ by the
conjugate exponents $q_*,2^*$. Finally
$\int\zeta_R|\nabla u|^2\to\|\nabla u\|_2^2$ by dominated
convergence. Letting $R\to\infty$ in \eqref{eq:limit-energy-cutoff}
proves the energy equality and completes the theorem.
\end{proof}

\section{Application: far-field expansion and cancellations}\label{sec:far-field}
In this section we prove Theorem~\ref{thm:far-field}. Throughout this section, $(u,p)$ is a solution furnished by the weighted construction for a force supported in $B_{R_0}$, and $C_f$ has the dependence in Theorem~\ref{thm:whole}. Repeated spatial indices are summed unless they are explicitly fixed. The first coefficient $b$ depends only on the force, while $A$ also contains the quadratic moment of the constructed velocity. Thus the formulas give explicit coefficients in terms of $f$ and $u$, without asserting that $A$ is determined by the force independently of the choice of solution.
The link with~\cite[Proposition 4.5]{KMT2018} is explicit. When $b=0$, the effective force $G=f-\Div(u\otimes u)$ below satisfies $|G(x)|\le C_f(1+|x|)^{-(2n-3)}$, $\int G=0$, and $2n-3>n+2$, while $u(x)\to0$. That linear proposition therefore yields $u_i=-\partial_kU_{ij}A_{kj}+O(r^{-n})$ without a smallness assumption on $G$. The scalar-matrix ambiguity in $A$ also follows from~\cite[Lemma 4.1 and Remark 4.2]{KMT2018}. We give a direct calculation covering both $b=0$ and $b\ne0$, together with the gradient and pressure remainders.

\begin{proof}[Proof of Theorem~\ref{thm:far-field}]
\proofstep{I}{Moments of the effective force.}
Set $G_j=f_j-u_\ell\partial_\ell u_j=f_j-\partial_\ell(u_\ell u_j)$. The decay~\eqref{eq:main-decay} gives
\begin{equation}\label{eq:far-source-moment-bound}
 |G(y)|\le C_f(1+|y|)^{3-2n},\qquad
 \int_{\Rn}(1+|y|)^2|G(y)|\,dy+\norm{u}_2^2\le C_f.
\end{equation}
The compactly supported part of $G$ is included by increasing $C_f$. For the integrability assertions,~\eqref{eq:main-decay} gives
\[
 \begin{aligned}
 \int(1+|y|)^2|G(y)|\,dy
 &\le C_f\left(1+\int_1^\infty t^{n-1+2+3-2n}\,dt\right)
 =C_f\left(1+\frac1{n-5}\right),\\
 \norm{u}_2^2
 &\le C_f\left(1+\int_1^\infty t^{n-1+4-2n}\,dt\right)
 =C_f\left(1+\frac1{n-4}\right).
 \end{aligned}
\]

The construction and the kernels in~\eqref{eq:Stokes-kernels} give
\begin{equation}\label{eq:far-representation}
 u_i=U_{ij}*G_j,\qquad
 \partial_\ell u_i=(\partial_\ell U_{ij})*G_j,\qquad
 p=P_j*G_j.
\end{equation}
All three convolutions are absolutely convergent: the kernels have orders two, one, and one at the origin, and~\eqref{eq:far-source-moment-bound} controls their tails. The gradient identity follows first in distributions and then pointwise. This uses only the locally integrable first derivative of $U$.

To identify the moments of $G$, let $\eta_R(y)=\eta(y/R)$, where $\eta\in C_c^\infty(B_2)$ equals one on $B_1$. Integration by parts and the bound $|\nabla\eta_R|\le C/R$ give
\[
 \begin{aligned}
 \int\eta_R\partial_\ell(u_\ell u_j)
 &=-\int u_\ell u_j\partial_\ell\eta_R,\\
 \left|\int u_\ell u_j\partial_\ell\eta_R\right|
 &\le\frac C R\int_{R<|y|<2R}|u|^2
 \le\frac C R\norm{u}_2^2\longrightarrow0.
 \end{aligned}
\]
Similarly,
\[
 \begin{aligned}
 \int y_k\eta_R\partial_\ell(u_\ell u_j)
 &=-\int\eta_R u_k u_j-\int y_k u_\ell u_j\partial_\ell\eta_R,\\
 \left|\int y_k u_\ell u_j\partial_\ell\eta_R\right|
 &\le C\int_{R<|y|<2R}|u|^2\longrightarrow0.
 \end{aligned}
\]
Dominated convergence gives $\int\eta_Ru_ku_j\to\int u_ku_j$, and the moment bound~\eqref{eq:far-source-moment-bound} justifies the limits on the left. Hence
\begin{equation}\label{eq:far-effective-moments}
 \int_{\Rn}G_j=b_j,\qquad \int_{\Rn}y_kG_j=A_{kj}.
\end{equation}
In particular, the quadratic contribution to $A$ has the positive sign in~\eqref{eq:far-moments}, and $|b|+|A|\le C_f$.

\proofstep{II}{Kernel expansion and the remainder.}
Let $\mathcal K$ be a scalar component of $U$, $\nabla U$, or $P$. It is smooth off the origin and homogeneous of degree $a-n$, with $a=2$ for $U$ and $a=1$ for $\nabla U$ and $P$. Thus $|\nabla^m\mathcal K(z)|\le C_n|z|^{a-n-m}$ for $z\ne0$ and $m=0,1,2$. Fix $j$ without summing over it, and define
\[
 \mathcal R_{\mathcal K,j}(x)
 =(\mathcal K*G_j)(x)-\mathcal K(x)b_j
   +\partial_k\mathcal K(x)A_{kj}.
\]
For $r=|x|\ge2(R_0+1)$ and $|y|\le r/2$, Taylor's formula along the segment from $x$ to $x-y$ yields
\[
 \begin{aligned}
 \mathcal K(x-y)-\mathcal K(x)+y_k\partial_k\mathcal K(x)
 &=\int_0^1(1-t)y_ky_\ell\partial_k\partial_\ell\mathcal K(x-ty)\,dt,\\
 |\mathcal K(x-y)-\mathcal K(x)+y_k\partial_k\mathcal K(x)|
 &\le C_n r^{a-n-2}|y|^2.
 \end{aligned}
\]
Here $|x-ty|\ge r/2$ for $0\le t\le1$. Integration against $|G_j(y)|$ over this region contributes at most $C_n r^{a-n-2}\int|y|^2|G(y)|\,dy\le C_f r^{a-n-2}$.

Write $D_r=\{y:|y|>r/2\}$. The pointwise bound for $G$ gives
\begin{equation}\label{eq:far-tail-moments}
 \int_{D_r}|y|^m|G(y)|\,dy
 \le C_f\int_{r/2}^\infty t^{m+2-n}\,dt
 \le C_f r^{m+3-n},\qquad m=0,1.
\end{equation}
Consequently,
\[
 |\mathcal K(x)|\int_{D_r}|G|
 +|\nabla\mathcal K(x)|\int_{D_r}|y||G|
 \le C_f\bigl(r^{a-n}r^{3-n}+r^{a-n-1}r^{4-n}\bigr)
 \le C_f r^{a+3-2n}.
\]
For the remaining tail convolution, split $D_r$ according to whether $|x-y|<r/2$. On the first part, $|G(y)|\le C_f r^{3-2n}$ and local integrability of the kernel gives
\[
 \int_{D_r\cap\{|x-y|<r/2\}}|\mathcal K(x-y)|\,|G(y)|\,dy
 \le C_f r^{3-2n}\int_{|z|<r/2}|z|^{a-n}\,dz
 \le C_f r^{a+3-2n}.
\]
On the second part, $|\mathcal K(x-y)|\le C_n r^{a-n}$, so~\eqref{eq:far-tail-moments} gives the same bound. Combining the regions and using $n\ge16$ proves
\begin{equation}\label{eq:far-kernel-remainder}
 |\mathcal R_{\mathcal K,j}(x)|
 \le C_f\bigl(r^{a-n-2}+r^{a+3-2n}\bigr)
 \le C_f r^{a-n-2}.
\end{equation}
Applying this estimate separately to the three kernels in~\eqref{eq:far-representation}, and summing over $j$, yields
\begin{equation}\label{eq:far-kernel-profiles}
 \begin{aligned}
 u_i(x)&=U_{ij}(x)b_j-\partial_kU_{ij}(x)A_{kj}+R_{u,i}(x),\\
 p(x)&=P_j(x)b_j-\partial_kP_j(x)A_{kj}+R_p(x),
 \end{aligned}
\end{equation}
For the gradient remainder, apply~\eqref{eq:far-kernel-remainder} to $\partial_\ell U_{ij}$. The identities~\eqref{eq:far-representation} then give
\[
 \begin{aligned}
 R_{u,i}&=\sum_j\mathcal R_{U_{ij},j},&
 \partial_\ell R_{u,i}&=\sum_j\mathcal R_{\partial_\ell U_{ij},j},&
 R_p&=\sum_j\mathcal R_{P_j,j},\\
 |R_u|&\le C_f r^{-n},&
 |\nabla R_u|&\le C_f r^{-n-1},&
 |R_p|&\le C_f r^{-n-1}.
 \end{aligned}
\]
This proves~\eqref{eq:far-remainder}. Derivatives of the kernel of order two and three appear only away from the origin in the Taylor estimate; the convolution formula for $\nabla u$ uses only $\nabla U$.

\proofstep{III}{Compute the explicit profiles.}
For $x\ne0$, direct differentiation of~\eqref{eq:Stokes-kernels} gives
\begin{equation}\label{eq:far-kernel-derivatives}
 \begin{aligned}
 \partial_kU_{ij}(x)
 &=\frac{1}{2\omega_n}\left[
 (-\delta_{ij}x_k+\delta_{ik}x_j+\delta_{jk}x_i)r^{-n}
 -n x_i x_j x_k r^{-n-2}\right],\\
 \partial_kP_j(x)
 &=\frac{1}{\omega_n}\left[\delta_{kj}r^{-n}
 -n x_kx_jr^{-n-2}\right].
 \end{aligned}
\end{equation}
The zeroth-order contractions are $U(x)b=u^{(0)}(x)$ and $P(x)\cdot b=p^{(0)}(x)$. For the first moments, $\delta_{ij}x_kA_{kj}=(A^{\mathsf T}x)_i$, $\delta_{ik}x_jA_{kj}=(Ax)_i$, and $x_jx_kA_{kj}=x^{\mathsf T}Ax$. Therefore
\[
 \begin{aligned}
 -\partial_kU_{ij}(x)A_{kj}
 &=\frac1{2\omega_n}\left[
 ((A^{\mathsf T}-A)x)_i r^{-n}
 -(\operatorname{tr}A)x_i r^{-n}
 +n x_i(x^{\mathsf T}Ax)r^{-n-2}\right],\\
 -\partial_kP_j(x)A_{kj}
 &=\frac1{\omega_n}\left[-(\operatorname{tr}A)r^{-n}
 +n(x^{\mathsf T}Ax)r^{-n-2}\right].
 \end{aligned}
\]
Substituting $x=re$ gives~\eqref{eq:far-dipole}, hence~\eqref{eq:far-expansion}. Homogeneity and $|A|\le C_f$ give $|u^{(1)}|+r|\nabla u^{(1)}|+r|p^{(1)}|\le C_f r^{1-n}$, and~\eqref{eq:far-remainder} yields~\eqref{eq:far-leading-remainder}.

\proofstep{IV}{Prove sharpness and the cancellation criteria.}
The symmetric matrix $I+(n-2)e\otimes e$ has eigenvalues $1$ and $n-1$. It follows that
\begin{equation}\label{eq:far-leading-sharp}
 \frac{|b|r^{2-n}}{2(n-2)\omega_n}
 \le |u^{(0)}(x)|
 \le \frac{(n-1)|b|r^{2-n}}{2(n-2)\omega_n}.
\end{equation}
If $b\ne0$, put $d_n=[2(n-2)\omega_n]^{-1}$ and let $C_*$ satisfy $|u-u^{(0)}|\le C_*r^{1-n}$. Choose $R_b=\max\{2(R_0+1),2C_*/(d_n|b|)\}$. For $r\ge R_b$, $C_*r^{1-n}\le\tfrac12d_n|b|r^{2-n}$, so~\eqref{eq:far-leading-sharp} gives
\[
 \tfrac12d_n|b|r^{2-n}\le|u(x)|
 \le(n-\tfrac12)d_n|b|r^{2-n}.
\]
This proves part~(1). If $b=0$, both leading profiles vanish and~\eqref{eq:far-leading-remainder} proves~\eqref{eq:far-zero-force}. Conversely, the lower bound in part~(1) excludes $u=o(r^{2-n})$ when $b\ne0$. This proves part~(2).

For part~(3), put $W=(A-A^{\mathsf T})/2$ and $S_0=(A+A^{\mathsf T})/2-(\operatorname{tr}A)I/n$. Then $W$ is skew-symmetric, $S_0$ is symmetric and trace free, and the second velocity profile satisfies
\begin{equation}\label{eq:far-angular-cancellation}
 2\omega_n r^{n-1}u^{(1)}(x)
 =-2We+n(e^{\mathsf T}S_0e)e.
\end{equation}
Since $e\cdot We=0$, the two terms are orthogonal, and
\[
 |2\omega_n r^{n-1}u^{(1)}(re)|^2
 =4|We|^2+n^2(e^{\mathsf T}S_0e)^2.
\]
If this profile vanishes for every unit vector, then $We=0$ and $e^{\mathsf T}S_0e=0$ for every $e$. The first identity gives $W=0$. For the second, let $e_i$ be the coordinate vectors and $e_{ij}=(e_i+e_j)/\sqrt2$. Then $(S_0)_{ii}=e_i^{\mathsf T}S_0e_i=0$ and, for $i\ne j$, $2(S_0)_{ij}=2e_{ij}^{\mathsf T}S_0e_{ij}-(S_0)_{ii}-(S_0)_{jj}=0$. Thus $S_0=0$, equivalently $A=cI$. Conversely, $A=cI$ gives $A^{\mathsf T}-A=0$ and $Q_A(e)=nc-nc=0$, so both profiles in~\eqref{eq:far-dipole} vanish.

If $b=0$ and $u=O(r^{-n})$, then along each fixed direction~\eqref{eq:far-expansion} and~\eqref{eq:far-remainder} give
\[
 -2We+n(e^{\mathsf T}S_0e)e
 =\lim_{r\to\infty}2\omega_n r^{n-1}\bigl(u(re)-R_u(re)\bigr)=0.
\]
The preceding calculation yields $A=cI$. If $b=0$ and $A=cI$,~\eqref{eq:far-remainder} proves~\eqref{eq:far-scalar-moment}.

\proofstep{V}{Use the divergence-free force condition.}
Finally, suppose $\Div f=0$ in distributions. Testing against $y_j\eta_R(y)$ with $R>R_0$ gives $0=-\int_{\Rn}f_j$, because the cutoff equals one on the support of $f$. Thus $b=0$, as claimed.
\end{proof}

\appendix
\section{The Leray--Schauder fixed point  theorem}\label{app:LS}
Recall  the classical
Leray--Schauder theory; see Leray--Schauder~\cite{LS1934} and
Nirenberg~\cite{N2001}. 
\begin{theorem}\label{thm:LS-continuation}
Let $X$ be a real Banach space and let $T:[0,1]\times X\to X$ be
continuous. Assume that:
\begin{enumerate}
\item for every bounded $B\subset X$, the set
$\{T(t,x):t\in[0,1],\ x\in B\}$ is relatively compact in $X$;
\item there is $M<\infty$ such that every fixed point $x=T(t,x)$,
$0\le t\le1$, satisfies $\|x\|_X\le M$;
\item $x=0$ is the only fixed point of $T(0,\cdot)$, and
\begin{equation}\label{eq:LS-small-map}
 \lim_{\substack{x\to0\\x\ne0}}
 \frac{\|T(0,x)\|_X}{\|x\|_X}=0.
\end{equation}
\end{enumerate}
Then $T(1,\cdot)$ has a fixed point. More precisely, for every $R>M$,
\begin{equation}\label{eq:LS-degree-one}
 \deg_{\mathrm{LS}}(I-T(t,\cdot),B_R^X,0)=1
 \qquad(0\le t\le1).
\end{equation}
\end{theorem}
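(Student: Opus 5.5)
The plan is to prove Theorem~\ref{thm:LS-continuation} with the standard Leray--Schauder degree for compact perturbations of the identity. We use three properties of that degree: homotopy invariance under compact homotopies that have no zeros on the boundary, excision, and normalization $\deg_{\mathrm{LS}}(I,B,0)=1$ for $0\in B$. Fix $R>M$. By hypothesis~(1), $T$ restricted to $[0,1]\times\overline{B_R^X}$ is a compact homotopy, meaning it is continuous with relatively compact image. By hypothesis~(2), $x\ne T(t,x)$ whenever $\|x\|_X=R$ and $t\in[0,1]$. Homotopy invariance therefore shows that $\deg_{\mathrm{LS}}(I-T(t,\cdot),B_R^X,0)$ does not depend on $t$. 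Everything reduces to showing that this degree equals $1$ at $t=0$.

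\textbf{Step 1: the degree at $t=0$.} Hypothesis~(3) lets us choose $0<\delta<R$ such that $\|T(0,x)\|_X\le\tfrac12\|x\|_X$ for $0<\|x\|_X\le\delta$. On $\overline{B_\delta^X}$, consider the compact homotopy $H(\lambda,x)=\lambda T(0,x)$ with $\lambda\in[0,1]$. It is compact because the image lies in $[0,1]\cdot\overline{T(0,B_\delta)}$, and the product of a compact interval with a relatively compact set is relatively compact. Suppose $\|x\|_X=\delta$ and $x=\lambda T(0,x)$. Then $\delta\le\lambda\delta/2\le\delta/2$, which is impossible. So $H$ has no zeros of $I-H$ on $\partial B_\delta^X$. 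Homotopy invariance then gives $\deg(I-T(0,\cdot),B_\delta^X,0)=\deg(I,B_\delta^X,0)=1$.

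\textbf{Step 2: excision.} The only zero of $I-T(0,\cdot)$ in $\overline{B_R^X}$ is $x=0$, by the uniqueness part of~(3), and it lies in $B_\delta^X$. Excision therefore gives $\deg(I-T(0,\cdot),B_R^X,0)=\deg(I-T(0,\cdot),B_\delta^X,0)=1$. Combined with the $t$-independence above, this proves \eqref{eq:LS-degree-one} for every $t$. The solution property of the degree (nonzero degree implies a zero) then produces a fixed point of $T(1,\cdot)$ in $B_R^X$.

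\textbf{Main obstacle.} The only point needing care is that the degree is defined and the homotopy is admissible. The maps must be compact perturbations of the identity on bounded closed sets, and the zeros must stay off the boundary uniformly in the parameter. Uniformity in the parameter is exactly what hypotheses~(1) and~(2) supply. Hypothesis~(3) is used twice: its limit condition identifies the local degree through the small-ball homotopy, and its uniqueness clause justifies the excision. Uniqueness by itself would not determine the degree. Everything else is the classical theory of \cite{LS1934,N2001}, which we cite rather than re-derive.
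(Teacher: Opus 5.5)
Your proof is correct and takes essentially the paper's approach. The appendix only cites the classical theory. The paper's own degree computation, in Step~V of the proof of Theorem~\ref{thm:whole}, uses the same small-ball homotopy $v=\lambda T(0,v)$, normalization on $B_\delta^X$, and excision from $B_R^X$ to $B_\delta^X$ justified by uniqueness of the zero, with $t$-independence supplied by the a~priori bound.
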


\section*{Declarations}
\begin{itemize}
    \item \textbf{Acknowledgments}
    W. Wang was supported by National Key R\&D Program of China (No.2023YFA1009200) and NSFC under grant 12471219.
    \item \textbf{Conflict of interest} The authors declare that they have no conflict of interest.
    \item \textbf{Data Availability} Data sharing is not applicable to this article as no datasets were generated or analyzed during the current study.
    \item \textbf{Use of artificial intelligence}
   During the preparation of this work, the authors used ChatGPT (OpenAI) in order to assist with language editing, manuscript organization, and checking mathematical arguments. After using this tool, the authors reviewed and edited the content as needed and take full responsibility for the content of the published article.
\end{itemize}

\bibliographystyle{myamsalpha}
\bibliography{Refs}

\end{document}